\newtheorem{theorem}{Theorem}
\newtheorem{lemma}{Lemma}
\newtheorem{definition}{Definition}
\newtheorem{proposition}{Proposition}
\theoremstyle{remark}
\newtheorem{remark}{Remark}
\def\i{\mathrm{in}}
\def\out{\mathrm{out}}
\newcommand{\mntext} {}
\newcommand{\newnewtext}{}
\newcommand{\mnewtext}{}
\newcommand{\mltext}{}
\newcommand{\newnewtex}{} 
\newcommand{\commented}[1]{}
\newcommand{\e}{\varepsilon}
\renewcommand{\d}{\mathrm{d}}
\newcommand{\eps}{\varepsilon}
\newcommand{\p}{\partial}
\newcommand{\Sp}{\mathbb{S}} 
\newcommand{\Z}{\mathbb{Z}} 
\newcommand{\R}{\mathbb{R}} 
\newcommand{\bD}{\mathbb{D}} 
\renewcommand{\H}{\mathcal{H}} 
\renewcommand{\P}{\mathcal{P}}
\newcommand{\I}{I} 
\newcommand{\T}{\mathscr{T}}
\newcommand{\X}{\mathcal{X}}
\newcommand{\Y}{\mathcal{Y}}
\newcommand{\WF}{\mathrm{WF}} 
\newcommand{\ext}{\mathrm{ext}} 
\newcommand{\loc}{\mathrm{loc}}
\newcommand{\supp}{\mathrm{supp}} 
\newcommand{\gpert}{ {\widetilde{g}} }
\newcommand{\Source}{\mathscr{S}}
\newcommand{\sct}{B}
\newcommand{\Char}{\mathrm{char}}
\renewcommand{\div}{\mathrm{div}}
\newcommand{\nullinf}{\mathcal{I}}
\newcommand{\gpenrose}{{g_{\hat N}}} 
\newcommand{\M}{{M }} 
\newcommand{\gM}{{g_M }} 
\newcommand{\N}{{N }} 
\newcommand{\gN}{{g_N }} 
\newcommand{\Next}{{N_\ext}} 
\newcommand{\gext}{{g_\ext}} 
\newcommand{\E}{{\mathcal E}}
\newcommand{\Nextended}{\Next}
\newcommand{\Wminus}{{\Omega_{\mathrm{in}}}}
\newcommand{\Wplus}{{\Omega_{\mathrm{out}}}}
\newcommand{\goodN}{\mathcal{N}} 
\newcommand{\U}{\mathcal{U}} 
\newcommand{\NP}{\mathrm{NP}}
\newcommand{\SP}{\mathrm{SP}}
\newcommand{\extension}[1]{} 
\newcommand{\noextension}[1]{} 
\newcommand{\observation}[1]{} 
\newcommand{\generalizations}[1]{} 
\newcommand{\motivation}[1]{} 
\newcommand{\nomotivation}[1]{} 
\newcommand{\canberemoved}[1]{} 
\newcommand{\muutos}[1]{{#1}}
\newcommand{\norm}[1]{\left\Vert #1\right\Vert}
\newcommand{\vece}{{\vec\varepsilon}}
\def \mcl {{\mathcal L}}
\def \mcu {{\mathcal U}}
\def \bpf {\begin{proof}}
\def \epf {\end{proof}}
\renewcommand{\tilde}[1]{\widetilde{#1}}
\renewcommand{\hat}{\widehat}
\newcommand{\mtext}{}
\def \bfo {\begin {eqnarray*} }
\def \efo {\end {eqnarray*} }
\def \ba {\begin {eqnarray*} }
\def \ea {\end {eqnarray*} }
\def \beq {\begin {eqnarray}}
\def \eeq {\end {eqnarray}}
\title[Inverse scattering in Lorentzian manifolds]{Inverse scattering problems for non-linear wave equations
on Lorentzian manifolds}
\author[Alexakis,  Isozaki, Lassas, Tyni]{Spyros Alexakis,  Hiroshi Isozaki, Matti Lassas, Teemu Tyni}
\address{Spyros Alexakis, Department of Mathematics, University of Toronto, Canada.\vspace{-3mm}}
\address{Hiroshi Isozaki,  College of Mathematics,
 University of Tsukuba,  Japan.\vspace{-3mm}}
\address{Matti Lassas,  Department of Mathematics and Statistics, University of Helsinki, Finland.\vspace{-3mm}}
\address{Teemu Tyni,
Research Unit of Applied and Computational Mathematics, University of Oulu, Finland.}
\date{January 14, 2025}
\begin{document}

\begin{abstract}
We show that an inverse scattering problem for a semilinear wave equation can be solved
on a manifold having an asymptotically Minkowskian infinity, that is,
scattering functionals determine the topology, differentiable structure, and
the conformal type of the manifold. Moreover, the metric and the coefficient
of the non-linearity are determined up to a multiplicative transformation.
The manifold on which the inverse problem is considered is allowed to be an open, globally hyperbolic manifold which may have non-trivial topology
or several infinities (i.e., ends) of which at least one has
to be of the asymptotically Minkowskian type. 
To formulate the inverse problems we define
a new type of data, non-linear scattering functionals, which are defined also in the cases
where the classically defined scattering operators are not well-defined. This makes it possible to solve inverse problems also in cases where some of the incoming waves lead to a blow-up of the scattered solution.
We use non-linear interaction of waves  as a beneficial tool that helps to solve the inverse problem. The corresponding inverse problem for the linear wave equation still remains unsolved.

\end{abstract}
\maketitle

\noindent {\bf Keywords:} Inverse scattering problem, semilinear wave equation. 
 \tableofcontents

\section{Introduction: inverse scattering problems}

In this work we consider scattering problems for non-linear wave equations on Lorentzian manifolds.
Examples of scattering problems include radar, seismic and optical  imaging 
\cite{Scattering1B,Scattering3,Scattering4,StefanovJAMS}
and theory of experiments in particle physics, see \cite{Isozakibook,Scattering9}.
In typical linear cases, scattering problems  \cite{Scattering5,Scattering7,Scattering8,Scattering10}
are formulated in asymptotically flat space-times $(\R^{3+1},g)$ for linear wave equations $\square_g u =0$ as the task of recovering information about the space-time metric $g$ from measurements of the scattering operator $S: u_-\mapsto u_+$.
Here $u_\pm(s,\theta)=\lim_{t\to\pm\infty} |t|^{(3-1)/2} u(t+s,t\theta)$, $s\in\R$, are the past and future radiation fields.
The geometrical formulation of inverse scattering problems have been studied e.g. in \cite{GuillarmouSalo,Joshi,Joshi2,scatt1} for a linear wave equation with a time-independent metric. 
Inverse scattering problems for non-linear wave equations with a known metric and unknown non-linear term has been  studied in \cite{Stefanov1,scatt2,scatt3,scatt4}.

The work \cite{KLU2018} introduced a method now known as the {higher order linearization method} to inverse problems for nonlinear equations.
The method is based on self-interaction of waves in the presence of non-linearities, and has successfully been used to solve many inverse problems from local measurements (see e.g.\ \cite{NLeB,NLe6,NLe1,NLwe1} for elliptic problems and \cite{LassasICM,LLPT1,LLPT2,LUW} for wave equations).
Such methods are not available for linear wave equations, where the existing uniqueness results 
are limited to manifolds that are close to subsets of Minkowski space \cite{AFO,AFO2}, or have a time-independent or real-analytic metric \cite{Eskin}.
For the latter case the results are based on Tataru's unique continuation theorem \cite{Tataru1}, and these results have been shown to fail for general metric tensors that are not analytic in the time variable \cite{Alinhac}.

In \cite{KLU2018,NLWave2, FLO,KLOU}
inverse problems with near-field measurements 
have been studied  for non-linear wave equations, e.g., of the form
\beq\label{non-linear eq A}
\sum_{j,k=0}^3 g^{jk}(x)\frac {\p^2 u}{\p x^j\p x^k}(x)+
a(x)u(x)^\kappa=f(x)
\eeq
where $x=(x^0,x^1,x^2,x^3)=(t,y)\in \R^{1+3}$,  $x^0=t$ is the time-variable and $g^{jk}(x)$ is a Lorentzian metric. 
For this equation one can consider an open set $V\subset \R^{1+3}$ and the {\it source-to-solution map} $L:f\to u|_V$ that maps a (sufficiently small) source $f$ supported in the set $V$ to the restriction $u|_V$ of the corresponding solution $u$ of \eqref{non-linear eq A}.
This map is similar to the Dirichlet-to-Neumann map or the Cauchy data set, often used in the study inverse boundary value problems 
 \cite{kenig2014calderon,kenig2014recent,sun1997inverse,uhlmann2009electrical,sylvester1987global}
 for conductivity equation,
\cite{ferreira2009limiting,guillarmou2011calderon,kenig2014calderon,kenig2014recent,Novikov1,Novikov2,sylvester1987global}
for Schr\"odinger equation and \cite{guillarmou2011calderon} for Laplace-Beltrami equation
and \cite{oksanen-realprincipal,salo2012inverse,QE1,QE2} for general real principal type operators and systems.
 On counterexamples, see \cite{Daude-nonuniqueness,LiimatainenO}.
The work \cite{KLU2018} studied the question of which the (local) source-to-solution operator uniquely determine the  metric $g^{jk}(x)$ in a maximal set  $W\subset \R^{1+3}$ to which causal signals can propagate from $V$ and return to $V$.
Using non-linearity as a beneficial tool, inverse problems have been solved for several hyperbolic equations, for example \cite{NLg5,NLg7,NLg11,NLg4,NLg13}  for wave equation
and \cite{Stefanov-nonlinear} of Westervelt
equation, and \cite{NLg3,NLg4,Zhou1,Zhou2,Zhou3} for hyperbolic systems of equations. 
Some physically motivated works are \cite{KLOU} and \cite{NLWave2,FLO,NLWave4}, where inverse problems for the Einstein and Yang-Mills equations were considered, respectively.
Related questions have also been studied in \cite{NLe1,Krupchyk1,Krupchyk2,Krupchyk3,LaiZ} for elliptic equations.
There are also results for the Bolzmann equation \cite{Boltz1,Boltz2,LaiBoltzmann} 
and non-linear parabolic equations \cite{para1}.

In inverse scattering problems on Lorentzian manifolds, a motivating problem are wave scattering on a black hole \cite{Baskin-scattering1,Baskin-scatterin2,Hintz1} and the Doppler ultrasound imaging in moving medium \cite{Doppler-imaging} (on non-linear models used in ultrasound imaging, see \cite{NLg1,Stefanov-nonlinear,Uhlmann-acoustics}).
Our space-times will allow the causal structure of black hole exteriors, and in fact will allow multiple (asymptotically Minkowskian) ``ends'', and also multiple event horizons. However we should make clear that {we expect that the space-times we consider cannot be solutions to Einstein's equations with reasonable matter models. ({For those with a complete maximal Cauchy surface,} this is because their ADM energy will be zero.) Nonetheless, traditional notions of general relativity such as null infinities ${\mathcal I}^+, {\mathcal I}^-$, and event horizons {continue to make}  sense. So we adopt this terminology here.

The present paper has two goals:
The first one is to consider inverse scattering problems in a more general setting than is traditionally done and  introduce a new type of limited
scattering data, the {\it scattering functionals} (see Definition \ref{def: Scattering functionals} below).
{\mnewtext  In the study of scattering (and inverse scattering) for quasilinear equations (such as the Einstein equations) with incoming data coming from past null infinity on encounters several technical problems; {among many other difficulties,
one has}  the possibility of the causal structure of the space-time changing because of the quasilinearity. (The dynamical formation of black holes being one such example.) Even restricting to semi-linear equations, one may not be able to define a scattering map due to the finite-time breakdown of the solutions. Yet}
the traditional definition of the scattering operator \cite{Zworski1,Isozakibook,Melrose,LaxPhillips}, see also
\cite{Baez,Baez2,Taujanskas} on non-linear problems, requires
global existence of solutions.
To consider inverse problems in {\mnewtext  such potentially} unstable physical systems  where global existence of solutions may fail, we define  the  {scattering functionals} {\mnewtext  on incoming data that are suitably small}. These form a more limited data set which exists even in the case when 
the solutions {\mnewtext  may } not  exist globally -- The scattering functionals share the same feature as the generalized curvature used in synthetic non-smooth geometry (see \cite{Villani}):
The scattering functionals are equivalent to the scattering operator when it is well-defined, but the
scattering functionals are defined also in more general settings.


{\newnewtext Below, we will consider  Lorentzian manifolds that are assumed to be
globally hyperbolic. Note that this is a standard condition that guarantees that hyperbolic
equations have local solutions with compactly supported sources.
We will show that using 
the scattering functionals, it is possible to reconstruct the topology of the manifold and the conformal class of its metric 
under the assumption} that the manifold has at least one asymptotically Minkowskian infinity, and that the whole space-time is causally connected to this infinity in the future and the past.
The scattering functional are associated to using small amplitude  in-going waves and observing their scattering on a \emph{portion} of future null infinity.
We remark that the produced waves may well blow up somewhere in space-time; the key idea is that we observe their scattering data at only a part of the future light-like infinity ${\mathcal I}^+$ \emph{prior} to any potential blow-up.

The second goal of the paper is to show that the inverse scattering problems can be reduced to inverse problems for near field observations, including
the case when the perturbation of the metric is not compactly supported. 
For linear equations, inverse scattering problems and inverse problems with near field measurements are in many cases equivalent \cite{Ber}, while
for the non-linear equations the relation of these problems is not understood.
In this paper our aim is to develop a {\it  general method to  reduce inverse scattering problems to near-field problems}.
This method could be used to solve many classical inverse scattering problems for non-linear models.
Our approach is based on {\mnewtext  the Penrose compactification} of the space-time.
There, the non-compact Lorentzian manifold $(\R^{3+1},g)$ is mapped to $(\hat N,\gpenrose)$, where $\hat N$ is a compact subset of $\Sp^3\times\R$ (see Figure \ref{fig:penrose2}, Right).
The compactified manifold $(\hat N,\gpenrose)$ can be extended {smoothly} to
a larger, open subset of $\Sp^3\times\R$. It turns out that the past and future radiation fields can be thought of as resulting from waves produced by sources in the non-physical extension.
Hence, one could say that the scattering problem is reduced to a problem where the {\it measurements are done beyond the infinity of the physical space}.
 
Summarizing, we study the following inverse scattering problem:

\medskip

{\it Inverse scattering problem:} 
In a globally hyperbolic space-time with at least one infinity that is an asymptotically Minkowski space, does the scattering functionals  for a non-linear wave equation  uniquely determine  the topology and
differentiable structure of the underlying space-time and (the conformal type of) the Lorentzian metric?

\subsection{A simplified scattering problem}

{To warm up, let us consider the nonlinear wave equation
\begin{equation}\label{eq:nonlinear wave equation simple}
\square_g u(t,y) + 
{{a}}(t,y)u(t,y)^\kappa = 0,\quad x=(t,y)\in \R^{1+3}
\end{equation}
where $g$ is a globally hyperbolic Lorentzian metric on $\R^{1+3}=\R\times\R^3$,  and $\kappa\ge 4$ is an integer and
${{a}}(t,y)\not =0$ is a smooth Schwartz class rapidly decreasing function in $\R\times\R^3$.
We denote  the coordinates
of the Minkowski space by $x=(t,y)\in \R\times\R^3$ and the Minkowski metric by $\eta = -\d t^2 + \sum_{j=1}^3 (\d y^j)^2$.
To warm up, we consider the case when the topology of the space-time is that of $\R^4$  the difference $g-\eta$ and all of its derivatives vanish faster than any polynomial uniformly as $|x|\to \infty$. {\mnewtext  Analogously to the inverse problems for the linear wave equations, see \cite{Melrose}, we will consider the
radiation fields
\beq\label{Minkowski radiation fields}
u_\pm(s,\theta)=\lim_{t\to\pm\infty} |t|^{(3-1)/2} u(t+s,t\theta)
\eeq
where $\theta\in \mathbb S^2=\{\theta\in \R^3:\ \|\theta\|_{ \R^3}=1\}$ is the direction where the asymptotics of $u$ is observed  and  $s\in\R$ is a delay parameter. Below, by re-parametrizing the ingoing radiation field $u_-(s,\theta)$, $(s,\theta)\in \R\times \mathbb S^2$ as a function $h_-$, it holds that for any $s_{\i},s_{\out}\in \R$ there is $\e(s_{\i},s_{\out})>0$ such that 
when the function $u_-(s,\theta)$ is supported in $(-s_{\i},s_{\i})\times
 \mathbb S^2$ and its Sobolev norm in $H^k(\R\times
 \mathbb S^2)$, $k\ge 5$ is smaller than $\e(s_{\i},s_{\out})$,
 then the value of the outgoing radiation field $u_+(s_{\out},\theta_{\out})$
 is well-defined. For such ingoing radiation fields we
 define the scattering functionals $S_{s_{\i},s_{\out},\theta_{\out}}$ which map the function $u_-(s,\theta)$ to the number $u_+(s_{\out},\theta_{\out})$.
 
}

\subsubsection{Penrose {\mnewtext compactification} on the perturbed Minkowki space}
To define a geometric scattering operator for the wave equation \eqref{eq:nonlinear wave equation simple},
we next recall the properties of the \emph{Penrose {\mnewtext compactification}} studied in detail e.g.\ in 
 \cite{Penrose-republication-original,Wa1984}. To do this,
consider the Minkowski space $\R^{1+3}$, $n=1+3$, with time $t$ and spherical space coordinates $(r,\theta,\varphi)
\in [0,\infty)\times  [0,\pi] \times  [0,2\pi]$, in which
the Minkowski metric is given by
$$
\eta := \d s^2 = -\d t^2 + \d r^2 + r^2\left( \d\theta^2 + \sin^2(\theta)\d\varphi^2 \right).
$$
We use the auxiliary coordinates $v = t+r$ and $u=t-r$ and define  
a metric  $\eta_c= \Omega^2\eta$ that is a conformal to the Minkowski metric
 with the conformal factor 
 \beq\label{def: Omega}
 \Omega^2 = 4(1+v^2)^{-1} (1+u^2)^{-1}.
 \eeq
To represent $(\R^{1+3},\eta_c)$ in the Penrose coordinates, we define
 a map $\Phi : \R\times \R^3 \to \R\times\Sp^3$.
To do that, on $\R\times\Sp^3$ we use the time coordinate $T\in \R$ and on the 3-dimensional sphere $ \Sp^3$ the Riemannian normal coordinates 
 $(R,\theta,\varphi)\in [0,\pi]\times  [0,\pi] \times  [0,2\pi]$
 at  the North pole (denoted $\NP$). Here, $R$ is the distance from the North pole.
 On $\R\times\Sp^3$ we use the product Lorentzian metric, given in the above coordinates by
 \begin{align}\label{eq:Conformal minkowski metric}
g_{\R\times\Sp^3}= \d \widetilde{s}^2 &= -\d T^2 + \d R^2 + \sin^2(R)\left( \d\theta^2 +\sin^2(\theta)\d\varphi^2 \right).
\end{align}
We consider  the map $\Phi:\R^{1+3}\to\R\times\Sp^3$ that maps a point $x\in  \R\times \R^3$ with
 the coordinates $(t,r,\theta,\varphi)$ to a point on $\R\times\Sp^3$  that has 
 the coordinates $(T,R,\theta,\varphi)$  with 
 \begin{equation}\label{eq:Penrose coordinates}
\begin{split}
T &= \tan^{-1}(v)+\tan^{-1}(u),\\
R &= \tan^{-1}(v)-\tan^{-1}(u).
\end{split}
\end{equation}
Then ${{{{\hat N}}} }=\Phi(\R^{1+3})\subset \R\times\Sp^3$  consists of the points whose coordinates $(T,R,\theta,\varphi)$ satisfy
\begin{align*}
 -\pi < T+R < \pi,\quad
-\pi < T-R < \pi,\quad
R\geq 0.
\end{align*}
The map $\Phi:(\R^{1+3},\eta_c)\to ({{{{\hat N}}} },g_{\R\times\Sp^3}) $
is an isometric diffeomorphism. This implies that the Minkowski space
$(\R\times\R^3,\eta)$ is conformal to the subset ${{{{\hat N}}} } \subset \R\times\Sp^3$ endowed with the standard product metric
of $ \R\times\Sp^3$.
We will call the image ${{{{\hat N}}} }=\Phi(\R\times\R^3)$  the \emph{Penrose {\mnewtext compactification}} of the Minkowski space (see Figure \ref{fig:penrose2}, Left).

We use subsets of the boundary $\p {{{{\hat N}}} }\subset  \R\times\Sp^3$ that are named as follows: The future light-like infinity $\nullinf^+$ and  the past light-like infinity $\nullinf^-$,
\begin{equation}\label{def:null_infinities} 
 \nullinf^+=\p  {{\hat N} }\cap \{0<T<\pi\},\quad
  \nullinf^-=\p  {{\hat N} }\cap \{-\pi<T<0\},    
\end{equation}
  and the 
 future time-like infinity ${i}_+$,  the 
 past time-like infinity ${i}_+$, and the space-like infinity (i.e., the spatial infinity)
  ${i}_0$, are the points
  \begin{align}\label{def:other_infinities}
 {i}_+&=\p  {{\hat N} }\cap \{T=\pi\}=\{(NP,\pi)\},\\ {i}_-&=\p  {{\hat N} }\cap \{T=-\pi\}=\{(NP,-\pi)\},\\ {i}_0&=\p  {{\hat N} }\cap \{T=0\}=\{(SP,0)\}. 
 \end{align}
  Below, we use $i_0$ to denote both the single point set $\{(\SP,0)\}$ and the point $(\SP,0)$ and use similar notations for the points $i_\pm$.
 
\begin{figure}
\centerline{
    \includegraphics[height=6cm]{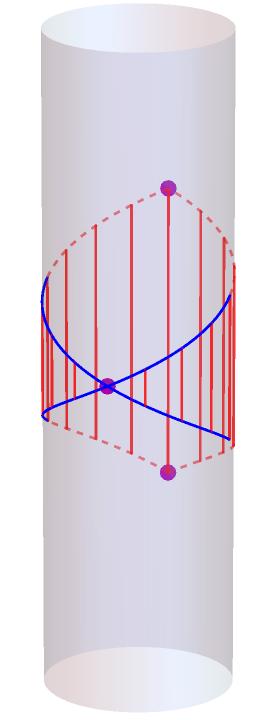}\hspace{17mm} \includegraphics[height=6cm]{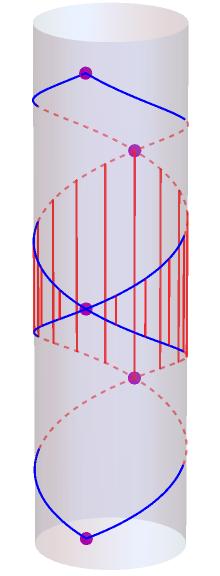}
    }
\caption{
 {\bf Left:} The Penrose map  is a conformal map
 $\Phi:\R\times \R^3\to \R\times\Sp^3$ and its image ${{{\hat N}}} = \Phi(\R\times\R^3)\subset  \R\times\Sp^3$ is 
 the Penrose {\mnewtext compactification} of the Minkowski space, see \eqref{eq:Penrose coordinates}.  In the figure $\R\times\Sp^3$ is visualized as
 a  cylindrical surface $\R\times \Sp^1$, and ${{{\hat N}}} $ is visualized as the area shaded by the red lines,
 that is, ${{{\hat N}}} $ is visualized as a subset that is cut from the cylinder by two ``circles'', one of which passes through  the points $i_0$ and $i_-$ and 
 the other passes through $i_0$ and $i_+$. 
 The lower part of the  boundary of ${{{\hat N}}} $ is  the past conformal infinity and the upper part of the boundary of ${{{\hat N}}} $ is the future conformal infinity. 
 {\bf Right:} The Penrose {\mnewtext compactification} ${\hat N}$ is 
 the extended to a manifold $\Next$ by gluing to ${\hat N}$ 
non-physical extensions on the other sides of the future and the past (light-like) infinities. 
In the figure, the boundary of extended space-time $\Next$ is marked by blue (color online) curves.
The shaded region is the Penrose {\mnewtext compactification} ${\hat N}$ of the Minkowski space.
The space ${\hat N}$ is extended to the Lorentzian manifold  
$\Next={\hat N}\cup {{{N}}}^+\cup {{{N}}}^-$ where ${{{N}}}^+$ and ${{{N}}}^-$
are the non-physical parts of extended manifold that are the mirror images of the
space ${\hat N}$ on the other side of the future and the past light-like infinities.
The boundary of $\Next$ is marked in the figure by blue curves.
}
\label{fig:penrose2} 
\end{figure}

Let $g_c = \Omega^2g$ be a metric that is conformal to the perturbed Minkowski metric $g$ on 
$\R^{1+3}$  with the conformal factor $\Omega^2$ and
denote by $\tilde g=\Phi_*g_c$ the push-forward metric on ${{{\hat N}}} $.
By using the transformation properties of the conformal Laplacian we see that a function $u:\R^{1+3}\to \R$ satisfies the non-linear wave equation \eqref{eq:nonlinear wave equation simple}
if and only if $\tilde u=(\Omega^{-1}u)\circ \Phi^{-1}$ solves the wave equation
\begin{equation}\label{simplified non-linear problem}
 \big( \square_{\gpert} + B_{\gpert}\big)\tilde u + A\cdot (\tilde u)^\kappa=0,\quad\text{in } {{\hat N}},
\end{equation}
where 
\begin{align}\label{eq: potentials A and B}
A := (\Phi^{-1})^*(a\Omega^{\kappa-3} ),
\quad
B_{\tilde g} :=-\frac{1}{6}(\Phi^{-1})^* ( R_{\Omega^2 g}-\Omega^{-2}R_g ).
\end{align}
and $R_g$ is the scalar curvature of the metric $g$.

As seen below, when $\kappa\ge 4$ is an
integer, 
for any $0>T_0>-\pi$ there are $s_\kappa>0$ and $\e>0$ such that boundary value problem
\beq\label{simplified non-linear problem 123}
\begin{cases}
& \big( \square_{\gpert} + B_{\gpert}\big)\tilde u + A\cdot (\tilde u)^\kappa=0,\quad\text{in } {{{{\hat N}}} }\\
&\tilde u|_{\mathcal I^-}=h,\\
&\tilde u=0\quad \hbox{for }T<T_0 
\end{cases}
\eeq
has a unique solution,
where $h\in H^{s_k}({\nullinf^-})$ satisfies $\supp(h)\subset \{T_0<T<0\}$
and $\|h\|_{H^{s_\kappa}({\nullinf^-}) }<\e$, where $ H^s({\nullinf^-})$  denotes the Sobolev space
on ${\nullinf^-}$  with smoothness index $s$. As ${{{\hat N}}} $  is a conformal compactification
of the Minkowski space, we call \eqref{simplified non-linear problem 123} a scattering problem.

Thus, the zero-function has in the space $C^\infty_0({\nullinf^-})$
a neighborhood $\mathcal U$ in which the  scattering problem \eqref{simplified non-linear problem 123}
has a unique solution for all $h\in  \mathcal U$. We define the geometric scattering operator
$S_{{{{\hat N}}} ,\tilde g,A}:\mathcal U\to C^\infty({\nullinf^+})$
 for 
the equation \eqref{simplified non-linear problem 123} by setting
\beq
S_{{{{\hat N}}} ,\tilde g,A}(\tilde u|_{\mathcal I^-})= \tilde u|_{\nullinf^+},\quad\hbox{for } h=\tilde u|_{\nullinf^-}\in \mathcal U.
\eeq
We will prove the following theorem for the perturbed Minkowski space.

\begin{theorem}\label{main thm for Minkowski}
Let $\eta$ be the standard Minkowski metric in the space $\R^{1+3}$, ${{a}}(x)$  be  a nowhere vanishing, Schwartz rapidly decreasing function
and $g$ be a globally hyperbolic Lorentzian metric in  $\R^{1+3}$ 
such that the tensor
$g_{jk}(x)-\eta_{jk}$ 
is a Schwartz rapidly decreasing function. Then the 
geometric scattering operator
$S_{{{{\hat N}}} ,\tilde g,A}$, defined in a neighborhood of the zero function in $C^\infty_0({\nullinf^-})$
determines the {\mltext conformal class of the metric $g$ uniquely.}
\end{theorem}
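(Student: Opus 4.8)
The plan is to recover the conformal class of $g$ by applying the \emph{higher order linearization method} to the geometric scattering operator $S_{\hat N,\gpert,A}$, using the $\kappa$-fold nonlinear interaction of waves to manufacture new singularities whose traces on $\nullinf^+$ encode the conformal and causal structure. For incoming data $h=\sum_{j=1}^{\kappa}\e_j h_j\in\mathcal U$ with small parameters $\e_1,\dots,\e_\kappa$, write $u(\cdot;\e)$ for the solution of \eqref{simplified non-linear problem 123}. The first derivatives $v_j:=\partial_{\e_j}u|_{\e=0}$ solve the linearized (conformally covariant) equation $(\square_{\gpert}+B_{\gpert})v_j=0$ with $v_j|_{\nullinf^-}=h_j$, while the full mixed derivative $w:=\partial_{\e_1}\cdots\partial_{\e_\kappa}u|_{\e=0}$ kills every term of $u^\kappa$ except the one in which each factor contributes a distinct $v_j$, so that
\[
(\square_{\gpert}+B_{\gpert})\,w = -\,\kappa!\;A\,v_1 v_2\cdots v_\kappa,\qquad w|_{\nullinf^-}=0.
\]
The same derivative of the scattering operator gives the \emph{known} quantity $w|_{\nullinf^+}=\partial_{\e_1}\cdots\partial_{\e_\kappa}S_{\hat N,\gpert,A}(h)|_{\e=0}$. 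Crucially, since the inverse problem for the linear equation is open, I would not try to compute the $v_j$ from the data; I would instead read off only the wavefront set of $w|_{\nullinf^+}$ as the microlocalization of the $h_j$ is varied.

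Next I would choose each $h_j$ to be microlocally concentrated on $\nullinf^-$ so that, for the unknown true geometry, $v_j$ is a \emph{distorted plane wave} propagating into $\hat N$ along a null geodesic $\gamma_j$ entering from past infinity. For a generic choice of four such data the geodesics $\gamma_1,\dots,\gamma_4$ meet at a single interior point $q$ with linearly independent null conormals $\xi_1,\dots,\xi_4$ spanning $T^*_q\hat N$ (possible because $\dim\hat N=4$ and $\kappa\ge4$); the remaining $\kappa-4$ data are taken so that $v_5,\dots,v_\kappa$ are smooth near $q$ and, generically, nonvanishing there. A computation in the calculus of conormal distributions then shows, using $A(q)\ne0$, that $A\,v_1\cdots v_\kappa$ carries a nonvanishing singularity concentrated at $q$ with wavefront covering all of $T^*_q\hat N\setminus 0$. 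Propagating this point source forward with the causal parametrix of $\square_{\gpert}+B_{\gpert}$ produces a wave singular precisely along the future light cone $L^+(q)$, so that $w|_{\nullinf^+}$ is singular exactly at the points $p\in L^+(q)\cap\nullinf^+$.

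Letting the incoming null geodesics and the observation point vary, the data therefore determine the relation ``four null geodesics issuing from $\nullinf^-$ meet at a common interior point $q$, which is observed at $p\in\nullinf^+$'', equivalently the family of light observation sets $\{\,L^+(q)\cap\nullinf^+\,\}$ as $q$ ranges over the region causally connected to $\nullinf^+$. By the reconstruction of conformal geometry from light observation sets, in the spirit of \cite{KLU2018}, this family (together with its first-arrival structure away from caustics) determines the topology, smooth structure, and conformal class of $(\hat N,\gpert)$. Since $\gpert=\Phi_*(\Omega^2 g)$ with $\Phi$ and $\Omega$ fixed by the Penrose construction, the conformal class of $\gpert$ on $\hat N$ is carried back to the conformal class of $g$ on $\R^{1+3}$, which is the assertion of the theorem.

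The hard part will be the microlocal bookkeeping \emph{at infinity}. One must verify that the singularity emitted from an interior point $q$ survives propagation all the way out to $\nullinf^+$ through the conformally compactified geometry, and is registered there with nonzero symbol despite the degeneration of $\Omega$ near the infinities $i_0,i_\pm$; and that the resulting singular set of the far field $w|_{\nullinf^+}$ reproduces the interior light observation set cleanly enough to drive the reconstruction. Establishing this faithful transfer of singularities from the interaction region to null infinity is precisely the reduction of the inverse scattering problem to a near-field light-observation problem, and is where I expect the analysis to be most delicate.
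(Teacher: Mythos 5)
Your toolkit (higher-order linearization, four-wave interaction producing a point-like singularity at $q$, light observation sets, and the reconstruction results of \cite{KLU2018}) is indeed the paper's toolkit, but your proposal jumps over the obstruction that the paper's entire proof is organized around: the incoming data live on $\nullinf^-$ (or, after the paper's reduction, sources live in $(N^-)^{\mathrm{int}}$), the measurements live on $\nullinf^+$ (resp.\ $N^+$), and these two regions are causally separated by the spatial infinity $i_0$. Consequently an interaction point $q$ cannot, in general, be joined both to the source region and to the observation region by null geodesics free of cut and conjugate points: a Schwartz perturbation of $\eta$ can focus null geodesics, and in the glued non-physical regions the $\Sp^3$ factor refocuses every null geodesic after parameter $\pi$. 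The paper makes this failure concrete with the example of $\R\times\Sp^3$: for $\omega_j=(-2j\pi,0)\times\Sp^3$, $j=1,2$, observed from $U=(0,\infty)\times\Sp^3$, one has $\mathcal E_U(\omega_1)=\mathcal E_U(\omega_2)$, i.e.\ families of light observation sets read off far away, beyond conjugate points, do not distinguish the underlying regions. For the same reason you cannot invoke \cite{KLU2018} in the form you want: its reconstruction theorem uses the \emph{earliest} light observation sets $\mathcal E_V(q)$ recorded in an open set $V$ that is a neighborhood of a timelike curve inside the manifold, not the singular supports $\mathcal L^+(q)\cap\nullinf^+$ of far-field traces, which beyond caustics no longer carry the first-arrival structure the reconstruction needs. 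A secondary gap: Goursat data $h_j$ microlocalized on $\nullinf^-$ can launch singularities along $\nullinf^-$ through $i_0$ (the ``causality violation at infinity'' discussed in the paper), which is exactly why the paper restricts to radiation fields in $\mathcal B_-(R(t_1))$ of spatially compactly supported waves and then replaces them by sources compactly supported in $(N^-)^{\mathrm{int}}$.

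What the paper does instead, and what is missing from your proposal, is a bootstrap that removes the cut-point problem before any \cite{KLU2018}-type argument is applied: (i) it reduces the scattering functionals to source-to-solution maps on the extended manifold $\Next$ with sources in compact sets $K_n\subset(N^-)^{\mathrm{int}}$ and observations in $N^+$ (Theorem~\ref{t:scattering_operator_determines V}, via the strong Huygens principle and the Goursat problem); (ii) it isolates thin neighborhoods of $\mathcal I^-$ and $\mathcal I^+$ in which null geodesics emanating from the source region have \emph{no} cut points, and reconstructs the conformal type of only these neighborhoods by the four-wave argument; (iii) it performs a conformal gauge transformation so that the two candidate metrics and nonlinear coefficients agree on these neighborhoods, showing the corresponding source-to-solution maps then agree up to a first-order smoothing error; (iv) from this it manufactures genuine \emph{near-field} data, namely source-to-solution maps with sources and observations both contained in a neighborhood $U$ of the timelike path $\hat\mu$; (v) only then does it run the machinery of \cite{KLU2018} with $U_{\mathrm{in}}=U_{\mathrm{out}}=U$ to recover the deep interior, and Theorem~\ref{main thm for Minkowski} is obtained as a special case of Theorem~\ref{main thm for general manifold}. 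Without an argument of this kind --- or some other way to handle interaction points $q$ whose connecting null geodesics to $\nullinf^\pm$ pass through cut or conjugate points --- the step in your proposal from ``wavefront set of $w|_{\nullinf^+}$'' to ``conformal class of $(\hat N,\tilde g)$'' does not go through.
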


\subsection{Scattering functionals and manifolds with an asymptotically Minkowskian infinity}

In this section, we will use the standard causality notations used in Lorentzian geometry
defined below in section \ref{sec: Basic notations}. We will consider manifolds which may
not be homeomorphic to the Minkowski space.

\subsubsection{Manifolds with an asymptotically Minkowskian infinity}
$ $\\ 
Let  $g_{\R\times\Sp^3}=-dt^2+g_{\Sp^3}$ be the standard Lorentzian metric of the product space $ \R\times\Sp^3$.

 As defined above, let $\Phi: \R^{1+3}\to \R\times\Sp^3$ be Penrose's conformal map, denote by  ${{{{\hat N}}}} =\Phi(\R^{1+3})$ the Penrose {\mnewtext compactification} of the Minkowski space. We denote $g_{{{{\hat N}}} }=g_{\R\times\Sp^3}|_{{{{\hat N}}} }$ ((see Figure \ref{fig:penrose2}, Left). 
 Recall that $\Omega\in C^\infty(\R^{1+3})$ is the function for which
 $\Phi: (\R^{1+3},\Omega^2g_{\R^{1+3}}) \to ({{{{\hat N}}} },g_{{{{\hat N}}} })$ is an isometry.
 Also, let $\omega=\Omega\circ \Phi^{-1}:{{{\hat N}}} \to \R_+$.
 
 \begin{definition}\label{Def: visible infinity} Let  $V\subset\R^{1+3}$ and $g_V$ be a Lorentzian metric on $V$.
 We say that $(V,g_V)$ is a neighborhood of  
the 
 light-like infinity in $\R^{1+3}$ with an asymptotically Minkowskian metric
 if 
 \begin{itemize}
  \item[(i)] there is an open set $\hat V\subset \R\times\Sp^3$ such that
 $\nullinf^+\cup \nullinf^-\cup \{i_0\}\subset \hat V$ and
  $$
 V=\Phi^{-1}({\hat V}\cap {{{{\hat N}}} }),
 $$
 \item[(ii)]  there is a $C^\infty$-smooth Loretzian metric $g_{\hat V}$ on $\hat V$
 such that $g_{\hat V}=g_{\R\times\Sp^3}$ on ${\hat V}\setminus {{{{\hat N}}} }$ and
 $$
 \Omega^2 g_V=\Phi^*g_{\hat V}\quad\hbox{on }V.
 $$
 \end{itemize}

\end{definition}

\begin{definition}\label{def:asymptotically nice infinity A}

{\it A manifold $(\M,\gM)$ has an asymptotically Minkowskian infinity $E$ (up to infinite order)
that is visible in the whole space-time $\M$ if 
  \begin{itemize}
  \item[(i)] $(\M,\gM)$ is a globally hyperbolic manifold and it has a subset 
 $E\subset \M$ such that $(E,\gM|_E)$ is isometric to 
  a neighborhood $(V,g_V)$ of  
the 
 light-like infinity in $\R^{1+3}$ with an asymptotically Minkowskian metric.

  \item[(ii)]  $J^+_\M(E)=J^-_\M(E)=\M$.
  \end{itemize}}
\end{definition}

{\mltext Let us note that it is possible to construct examples of space-times with an
asymptotically Minkowskian infinity whose topological completion admits a boundary with the structure of a bifurcate null surface, with the geometric properties of the event horizons of the Schwarzschild exterior, non-trivial topology, or several ends. See for example Section~\ref{sec: examples of spacetimes}, Examples 3-4.
Also, in section \ref{sec: generalizations} we consider conformally equivalent models similar to those
used in cosmology.}

Observe that  above it is possible  that $i_+,i_-\not \in \hat V$. This happens 
in the examples where we consider product space-times 
$M=\R\times (\R^3\# K)$ where $K$ is a compact, closed 3-dimensional manifold
which is not simply connected and $\R^3\# K$ is the connected sum of $\R^3$ and $K$. See Example 2 and Fig. \ref{fig:product manifold1} below
on a `wormhole' manifold $N_0=\R^3\# K$ with a handlebody that is obtained by glueing smoothly a 3-torus $K=\mathbb T^3$ and $\R^3$.
Roughly speaking, when we do the conformal change in Lorentizan metric, the handlebody becomes arbitrarily small when one approaches
the time-like infinity $i_+$, and thus $i_+$ can be considered as a singular boundary point of the conformally compactified space-time 

When $(E,\gM|_E)$ is an asymptotically Minkowskian infinity,
we see (using the notations of the above definitions) that there is an isometry 
(see Fig. \ref{fig:visualization of E}) $$\psi:(E,\gM|_E)\to (\hat V\cap {{{\hat N}}},
\omega^{-2}g_{\hat V}),$$

{\mltext We say that a function $a\in C^\infty (\M)$ is  a Schwartz class function in an 
 asymptotically Minkowskian infinity of $\M$ if for all $\alpha\in \mathbb N^4$ and $m\in \Z_+$ there is $C_{\alpha,m}>0$ such that
 \beq\label{Schwartz function}
|\p_x^\alpha ( a\circ \psi^{-1}\circ \Phi(x))|\leq C_{\alpha,m}(1+|x|)^{-m},\quad \hbox{for all }x\in V,
 \eeq
  where $|x|$ is the Euclidean length of $x\in \R^4=\R^{1+3}$.}
 
\begin{figure}[ht!]
\centerline{
\includegraphics[height=5.5cm]{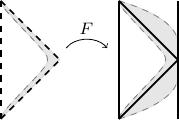}
}
\caption{Visualization of the definition of the asymptotically Minkowskian 
infinity $E\subset M$. The figures show the Penrose diagrams that are 2-dimensional analogs of
the cylinders shown in Figure \ref{fig:penrose2}.
The map $F$ takes $V$ conformally to $\hat V\cap {{{\hat N}}} $ where ${{{\hat N}}} =\Phi(\R^{1+3})\subset \R\times\Sp^3$ is the Penrose {\mnewtext compactification}
of the Minkowski space  and
 $\hat V\subset \R\times\Sp^3$ is a neighborhood of $\nullinf^+\cup \nullinf^-\cup i_0$.
The Lorentzian metric on $\hat V$ coincides with the standard metric of $\R\times\Sp^3$
outside ${{{\hat N}}} $.
 }
\label{fig:visualization of E}
\end{figure}

Let 
 $\omega_\M\in C^\infty(\M)$ be a strictly positive function satisfying $\omega_\M|_E=\omega\circ \psi$.
Without loss of generality, we can assume that 
$\psi$ is the identity map, that is,
$E$ and ${\hat V}\cap {{{\hat N}}} $ are identified as sets and the metric tensors on them are conformal,
{\mnewtext that is, they are the same up to a conformal factor.}
Let us denote   
\beq\label{N is M}
(\N,\gN)=(\M,(\omega_M)^{2}\gM),
\eeq
that is, $\N=\M$ but we use {\mnewtext different symbols for the two conformally related space-times}.

Next we extend the manifold $(\N,\gN)$
by gluing subsets of $ \R\times\Sp^3$ to it.
Let  (see Figure \ref{fig:penrose2}, Right)
\ba
& &{{{N}}}^+= J^+_{\R\times\Sp^3}(\nullinf^+\cup  i_0)\setminus J^+_{\R\times\Sp^3}(i_+)\subset {\R\times\Sp^3},\\
& &{{{N}}}^-= J^-_{\R\times\Sp^3}(\mathcal I^-\cup i_0)\setminus J^-_{\R\times\Sp^3}(i_-)\subset {\R\times\Sp^3}
\ea
be endowed with the Lorentzian metric $g_{\R\times\Sp^3}$ of ${\R\times\Sp^3}$. 
Recall, that  we can assume that $E$ in Definition \ref{def:asymptotically nice infinity A}
is identified with a subset of $\R\times\Sp^3$.

We define a Lorentzian manifold
\beq\label{extended manifold}
\Next=\N\cup {{{N}}}^+\cup {{{N}}}^-
\eeq
such that the differentiable structure of $\Next$ in $E\cup {{{N}}}^+\cup {{{N}}}^-$ coincides with the  one inherited from ${\R\times\Sp^3}$ (see Figure \ref{fig:penrose2}, Right).  We emphasize that $\Next$ is an open Lorentzian manifold and the time-like infinities $i_+$ and $i_-$ are not contained in $\Next$. Moreover, $i_+$ and $i_-$ may be singular boundary points of $\Next.$
The Lorentzian metric $\gext$ of $\Next$  is such a   $C^\infty $-smooth metric that on $\N$ it coincides with $\gN,$ and   
on $ {{{N}}}^+\cup {{{N}}}^-$  it coincides with the standard metric $g_{\R\times\Sp^3}$ of $\R\times\Sp^3$.

\commented{
\begin{figure}[ht!]
\centerline{\includegraphics[width=0.9\textwidth]{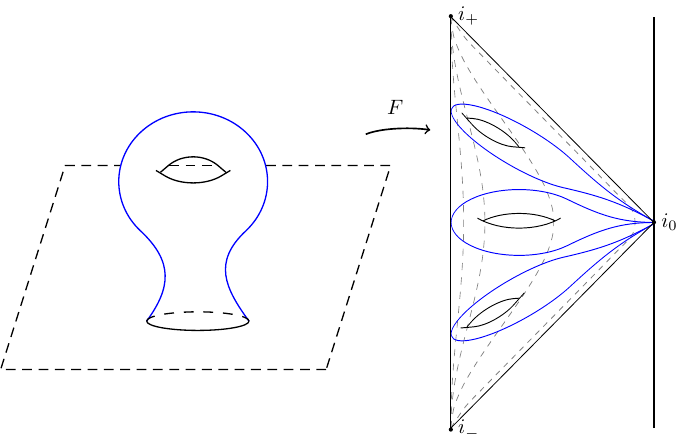}}
\caption{ 
{\bf Left.} A schematic visualization of a Riemannian manifold $(N,g_N)$ with an asymptotically 
Euclidean infinity. The Lorentzian manifold $(M,g_M)$, where $M=\R\times M$ and $g_M=-dt^2+g_N$
has a Minkowskian infinity $E$ and a non-trivial topology.
{\bf Right.}  A schematic visualization of the Lorentzian manifold $(M,\omega_M^2g_M)$ that is conformal to $(M,g_M)$. The manifold $(M,\omega_M^2g_M)$ is extended to a manifold $\Next$ by gluing `non-physical' subsets ${{N}} ^-$ and  ${{N}} ^+$  to the conformal light-like infinities. The scattering data on $(M,g_M)$ determines the source-to-solution map on $\Next$
corresponding to sources on  ${{N}} ^-$ and  observation on ${{N}} ^+$.}
\label{fig:handle compactification}
\end{figure}

}

\subsubsection{Scattering problem on a manifold with an asymptotically Minkow\-skian infinity}

For  $q\in \nullinf^+$ and  $p\in \nullinf^-$ we denote, slightly abusing the notations 
 \[
 I^-_{\M}(q)= I^-_{\Next}(q)\cap N,\quad  I^+_{\M}(p)= I^+_{\Next}(p)\cap N
 \]
 and 
 \[
 J^+_{\mathcal I^-}(p)=\mathcal I^-\cap J^+_{\Next}(p),\quad I^-_{\mathcal I^+}(q)=\mathcal I^+\cap I^-_{\Next}(q).
 \]

Let $k\in \Z_+$, $k\geq 5$,
and 
$h_-\in H^k(I^-)$ be supported in the future of the point $p\in \nullinf^-$.
Let  $q,q_1\in \nullinf^+$ be such that $q_1>q$, that is, $q_1$ is in the future of $q$. 
Let $a(x)>0$ be a {\mltext Schwartz class function in an asymptotical
Minkowskian infinity of  $\M$.}

\begin{definition} {\mnewtext  Let $k\ge 5$, ${q_0}\in \nullinf^+$,
$M({q_0}):=I^-_{M}(q_0)\subset M$ and $h_-\in C(\nullinf^-)$ be supported in the future of the point $p\in \nullinf^-$. 
{\mntext Let $a(x)$ and $d(x)$ Schwartz functions on $M$.} We say that a function $u\in H^k_{\loc}(M({q_0}))\cap C(M({q_0}))$ is a solution of the scattering
problem on $( M({q_0}),\gM)$  at rest prior to $p$  with the past radiation field $h_-$,
if
\beq\label{general non-linear problem on M}
  & &\square_{\gM}  u(x){\mntext +d(x)u(x)} + {\mltext a(x)} u(x)^{\kappa}=0,\quad\text{on } I^-_{\M}(q_0),\hspace{-20mm}
  \\ \label{general non-linear problem on M2}
  & &\lim_{x\to q} \omega_M(x)^{-1} u(\Phi(x))= h_-(q)\quad \hbox{for all }q\in  {\mathcal I^-},\\ 
  & &u=0\hbox{ on }M\setminus J^+_\M(p). \label{general non-linear problem on M3}
  \eeq
Moreover, we say that 
$\tilde u\in H^k( I^-_{\Next}(q_0)\cap N)$
and is a solution of the Goursat-Cauchy boundary value problem with the past radiation field (or with the Goursat data) $h_-$ if
\beq\label{general non-linear problem1}
  & & \square_{\gN} \tilde u(x)+ (B_{\gN}(x)+{\mntext D(x)})\tilde u(x) + A(x) \tilde u(x)^{\kappa}=0,\quad\text{in } N\cap I^-_{\Next}(q_0),\hspace{-20mm}
  \\ \label{general non-linear problem2}
  & &\tilde u|_{\nullinf^-}=h_-,\\
  & &\tilde u=0\hbox{ on }\Next\setminus J^+_{\Next}(p)  \label{general non-linear problem3}
  \eeq
  where 
  \begin{align}\label{ABD functions}
A:= a\cdot \omega_M^{{\kappa}-3},\quad {\mntext D:=d\cdot \omega_M^{{\kappa}-1}}
\quad
B_{\gN} :=-\frac{1}{6} (R_{g_N}  - \omega_M^{2}R_{g_M}).
\end{align}}
\end{definition}

{\mnewtext  

We say that $u$ has the future radiation field $h_+$ 
if 
\beq\label{future rad field formula}
\lim_{x\to q} \omega_M(x)^{-1} u(\Phi(x))= h_+(q)\quad \hbox{for all }q\in  {\mathcal I}^+(q_0),
\eeq
see formula \eqref{Minkowski radiation fields}.
 Also, we say that $\tilde u$ has the future radiation field $h_+$ if 
\beq  \label{general non-linear problem4}
\tilde u|_{ I^-_{\nullinf^+}(q_0)}=h_+. 
\eeq
The} existence and uniqueness of the solution $\tilde u$ of the 
Cauchy-Goursat problem \eqref{general non-linear problem1}-\eqref{general non-linear problem3} is considered  below in Theorem~\ref{thm:nonlinear_goursat copy}.
{\mnewtext 
By Sobolev embedding theorem $H^k(M(q_0))\subset C(M(q_0))$ for $k>2$, and we see that when $\tilde u$ is a solution of the Goursat-Cauchy boundary value problem
on $N\cap I^-_{\Next}(q_0)$ with the future radiation field $h_+$ 
then  
 $$
 u(x)=\omega_M(\Phi^{-1}(x))\tilde  u(\Phi^{-1}(x))
 $$ is a solution of
the scattering
problem on $( M(q_0),\gM)$ having the same future radiation field $h_+$.}

}
  
  \subsubsection{The past and future radiation fields of the waves that are compactly supported in space at any time.} In this section, we consider waves in the Minkowski space.
  The extension of the Penrose compactifiction of the Minkowski space $\R^{1+3}$
  is the product space $\R\times \mathbb S^3$ with the metric
  $-dT^2+g_{\mathbb S^3}$, where $g_{\mathbb S^3}$ is the Riemannian
  metric of the unit 3-sphere ${\mathbb S^3}\subset \R^4$. 
  For $R>0$, let 
  $$
  P(R)=\{\Phi(t,y)\mid t=0,\ y\in \R^3,\ |y|\leq R\}
  $$
  be the image of the set $\{0\}\times\overline B_{\R^3}(0,R)$ under the Penrose map $\Phi$.
  Moreover, let
  \ba
  S(R)=\{\gamma_{x,\xi}(s)\in \R\times \mathbb S^3\mid x\in P(R),\  \xi\in L_x(\R\times \mathbb S^3),\ s\in \R\}
\ea
(see Figure \ref{fig:sets B}, Left) and let
 \ba
  S_-(R)=S(R)\cap  \nullinf^-
  \ea
  be the set of the intersection points of light-like geodesics on the past light-like  geodesics
  emanating from the points in $S(R)$.
  Observe that $S_-(R)\subset\nullinf^-$ is compact.
  Figure~\ref{fig:sets B} depicts the sets $P(R),S(R)$, and $S_-(R)$.
\begin{figure}
    \centering
    \includegraphics[height=6cm]{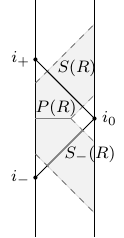}
    \hspace{2cm}
    \includegraphics[height=6cm]{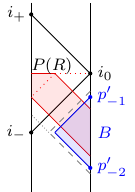}
    \caption{{\bf Left:} The set $P(R)$ is shown as the horizontal bold gray line. The grayed region depicts the set $S(R)$, while its restriction $S_-(R)$ to the past null infinity $\nullinf^-$ is shown as the diagonal gray line.
    {\bf Right}:
    Sets and the support of the cut-off function $\rho$ used in the proof of Theorem~\ref{thm:nonlinear_goursat copy}.}
    \label{fig:sets B}
\end{figure}

  Let $
  (\phi_0,\phi_1)\in \mathcal E'({\mathbb S^3}\setminus \{i_0\})^2$
  be distributions supported on $P(R)$.
  As the scalar curvature of the sphere ${\mathbb S^3}$ equals 6, we see that
in the Penrose compactification of the Minkowski space $\tilde b=1$. By
\cite[Appendix A]{Lax}, on the unit 3-sphere ${\mathbb S^3}$ the
wave equation 
\beq\label{wave on sphere2 A1} 
& &(\p_T^2-\Delta_{\mathbb S^3}+1)\tilde u=0,\quad \hbox{on }\R\times \mathbb S^3,
\\  \label{wave on sphere2 A2} 
& &(\tilde u|_{T=0},\p_T\tilde u|_{T=0})=(\phi_0,\phi_1),
\eeq
 satisfies the strong
 Huygen's principle, that is,
\begin{equation*}
\begin{split}
\supp(\tilde u)\subset \{\gamma_{x,\xi}(s)\in \R\times \mathbb S^3 \mid \ &x=(0,y)\in\R\times \mathbb S^3,\hbox{ where} \\
&\hspace{-25mm}y\in \supp(\phi_0)\cup \supp(\phi_1),\ \xi\in L_x(\R\times \mathbb S^3),\ 
s\in \R\},
\end{split}
\end{equation*}
where $L_x(\R\times \mathbb S^3)$ is the set of light-like vectors in the tangent space $T_xM_0$ of  manifold $M_0=\R\times \mathbb S^3$. 
Thus for  $
  (\phi_0,\phi_1)\in \mathcal E'({\mathbb S^3})^2$
  supported on $P(R)$ it holds that
\beq
\supp(\tilde u)\subset S(R),\quad 
\supp(\tilde u|_{\nullinf^-})\subset S_-(R).
\eeq
 
Observe also that the Penrose map $\Phi$ maps  the  surface $\{0\}\times \R^3$
to the  surface $\{0\}\times {\mathbb S^3}$.
Thus, the support of the initial data $(u|_{t=0},\p_tu|_{t=0})$
is compact in $\R^3$ if and only if the 
the support of the initial data $(\tilde u|_{T=0},\p_T\tilde u|_{T=0})$
is a compact subset of ${\mathbb S^3}\setminus \{i_0\}$.
Using this observation we define 
{\mnewtext the past and future radiation fields arising from compactly supported initial data}
\begin{align}\label{eq: sets B}
\mathcal B_\pm&=\{\tilde u|_{ \nullinf^\pm} \mid \tilde u\hbox{ solves 
\eqref{wave on sphere2 A1}-\eqref{wave on sphere2 A2} with
}(\phi_0,\phi_1)\in \mathcal E'({\mathbb S^3}\setminus \{i_0\})^2\},
\end{align}
where $\mathcal E'({\mathbb S^3}\setminus \{i_0\})$ is the set of distributions on
$\mathbb S^3$ whose support does not contain the point $i_0$.
{\mnewtext  We note that when  $\tilde u$ is the solution of the wave equation
having the initial data $(\phi_0,\phi_1)\in \mathcal E'({\mathbb S^3}\setminus \{i_0\})^2$,
then the wavefront set of $\tilde u$ does not intersect the normal bundle of 
$\nullinf^+$ or $\nullinf^-$, and thus the traces $\tilde u|_{ \nullinf^\pm}$
are well-defined as distributions, see \cite{Duistermaat}.}
Observe that if $h_-\in\mathcal B_-$ then $\supp(h_-)$ is a compact subset of
${\nullinf^-}$. Below, we call $\mathcal B_-$ and $\mathcal B_+$
the past and future radiation fields of concentrated waves (that in Minkowski
space correspond to waves $u(t,y)$ that at any time $t$ are
supported in a bounded subset of $\R^3$).  
We also denote for $R>0$
\begin{align}
\mathcal B_\pm(R)&=\{\tilde u|_{ \nullinf^\pm} \mid \tilde u\hbox{ solves \eqref{wave on sphere2 A1}-\eqref{wave on sphere2 A2} with
}(\phi_0,\phi_1)\in \mathcal E'(P(R))^2\},
\end{align}

On space-time $ \R\times \mathbb S^3$ we define a time function
${\bf t}: \R\times \mathbb S^3\to \R$ by setting ${\bf t}(p):=t$
for $p=(t,y)\in \R\times \mathbb S^3$. In particular, this defines the
time function ${\bf t}$ for 
$p=(t,y)\in N\cup \nullinf^-\cup \nullinf^-$.
Moreover, for $t_-,t_+\in [-\pi,\pi]$,  {\mnewtext $t_-<t_+$,} 
we denote  $\nullinf^-(t_-,t_+)=\{p\in \nullinf^- \mid {\bf t}(p)\in (t_-,t_+)\}$ and 
 $\nullinf^+(t_-,t_+)=\{p\in \nullinf^+ \mid {\bf t}(p)\in (t_-,t_+)\}$, and
for $t_1<0$
\beq\label{def R(p) NEW}
R(t_1)=\inf\{R>0\mid   \nullinf^-(-\pi-t_1,t_1) \subset  S_-(R)\}.\hspace{-15mm}
\eeq
For $-\pi<t_1<0$ it holds that $R(t_1)<\infty$ and the points $i_0,i_-$ have neighborhoods
$U_0,U_-\subset \R\times \mathbb S^3$, respectively, such that $S(R(t_1))\cap (U_0\cup U_-)=\emptyset$.

Let us next describe the scattering problem (see, e.g. \cite{Zworski1,Joshi,Melrose,SaBarretoWang1,Vasy-scattering0,Vasy-scattering2}) in a more general setting.
To this end, we consider scattering functionals which are also defined in many cases, where the scattering operator is not defined.

\begin{definition}\label{def: Scattering functionals} Let $k\ge 3$,
$-\pi<t_1<0$, 
 $q,q_1\in \nullinf^+$, where $q_1$ is in the future of $q$ 
 and $\e>0$ and  
\begin{equation}
\begin{split}
 \label{domain of S}
\mathcal D(S_{t_1,{q}})&=\mathcal D_{(\e)}(S_{t_1,{q}})= \{h\in H^k(\nullinf^-)\cap \mathcal B_-(R(t_1))\mid 
 \|h\|_{H^k(\nullinf^-)}<\e\}
 \end{split}
\end{equation}
be  an open neighborhood
of the zero function in $H^k(\nullinf^-)\cap \mathcal B_-(R(t_1))$.
Let  $\e=\e(t_1,q_1)>0$ be so small that for any past radiation field $h_-\in \mathcal D_{(\e)}(S_{t_1,{q}})$
there is a unique solution $\tilde u$  for
the Goursat-Cauchy boundary value problem
\eqref{general non-linear problem1}-\eqref{general non-linear problem3}.
Let $q\in \nullinf^+(0,t_2)$  and  $h_+$ be the future radiation field of $\tilde u$, see \eqref{general non-linear problem4}. Then, we say  that the {\mnewtext  (non-linear) functional} %
 \ba
& &S_{t_1,{q}}:\mathcal D_{(\e)}(S_{t_1,{q}})\to \R,\\
& &S_{t_1,{q}}(h_-)=h_+(q)
\ea
is a scattering functional 
associated to $(\M,\gM,a)$, and the time $t_1$ and the point $q\in \mathcal I^+$ (See Fig.\ \ref{fig: sources}). 
We also denote $S_{\M,\gM,a,{\mntext d};t_1,{q}}=S_{t_1,{q}}$.
\end{definition}

Observe also that if a scattering operator $S_{\M,\gM,a}$ exists, it determines the scattering functionals  $S_{\M,\gM,a;t_1,{q}}$ for all $(t_1,{q})$.

\medskip

\begin{remark}
 {\mnewtext    The past and future radiation fields of concentrated waves (i.e., waves in the Minkowski space 
    produced by compactly supported initial data) particularly suitable
    in study of inverse problems as a phenomenon called
the {\it causality violation at infinity (CVI)}, see  
\cite{Penrose-diagrams4-violation-of_causality_at_i0}, does not appear for them. In this  phenomenon one considers the extended spacetime $\R\times \mathbb S^3$ and light-like geodesic 
travelling from $\nullinf^-$ to  $\nullinf^+$  through the space-like infinity $i_0$.
Let us consider Green's function
$G(x,x')$ of the wave equation, $(\p_T^2-\Delta_{\mathbb S^3})G(\cdot,x')=\delta_{x'}$,
where  
 the source point $x'$ is $i_-$. The wavefront set of the function
 $G(x,i_-)$ is the union of bicharacteristics corresponding to light-like geodesics
  that travel through the point $i_0$. The function  $G(x,i_-)$ is a wave on 
  $\R\times \mathbb S^3$ whose wavefront set contains the normal bundle of $\nullinf^+$,
and thus  one can state that this wave carries information from $i_-\cup \nullinf^-$
to $\nullinf^+$, see also \cite{Zworski1}. However, the wave  $G(x,i_-)$  vanishes in the physical part $N=I^+_{\R\times \mathbb S^3}(i_-)\cap I^-_{\R\times \mathbb S^3}(i_+)$ of
the space-time $\R\times \mathbb S^3$. Moreover, there are distributions $f$ supported on the past light-like infinity $\nullinf^-$ 
for which the solution $u$ of the wave equation $\square_{\R\times \mathbb S^3}u+u=f$ is such that the singularities propagate along the (non-smooth) surface $\nullinf^-\cup i_0\cup \nullinf^0$ to the future light-like infinity but the wave $u$ is $C^\infty$-smooth on the physical part of the space-time.
This paradox is called the causality violation at infinity.
As the past radiation fields
 $h_-\in \mathcal B_-$ give rise to waves vanishing near $i_0$, these waves avoid causality violation at infinity.
 Below, we show that the scattering functionals $S_{\M^{(1)},\gM,a,d;t_1,{q}}$,
 where $-\pi<t_1<0$ and $q\in \nullinf^+$, are equivalent to the source-to-solution maps
 defined for sources supported in compact subsets $K_n\subset (N^-)^{\mathrm{int}}$, see \eqref{Kn sets}. In particular
 these sources are not supported on $\nullinf^-$, and therefore do not produce waves
 whose wavefronts propagate along the light-like geodesics that pass through $i_0$.
}
\end{remark}

\subsection{Main result}

Our main result is the following uniqueness result for the inverse scattering problem for 
a semi-linear wave equation. 

\begin{theorem}\label{main thm for general manifold}
Let $(\M^{(j)},g^{(j)})$, $j=1,2$ be two globally hyperbolic manifolds with asymptotically Minkowskian infinities (up to infinite order) that are visible in the whole space-time $\M^{(j)}$, {\mnewtext see Definition \ref{Def: visible infinity}}.
Let $a^{(j)},{\mntext d^{(j)}}\in C^\infty(\M^{(j)})$  be Schwartz functions in an asymptotically Minkowskian infinity of $M^{(j)}$, $a^{(j)}(x)\not=0$ for all $x\in \M^{(j)}$, and ${\kappa}\ge 4$.
 Assume that   for all $-\pi<t_1<0$ and $q\in \nullinf^+$  the scattering functionals  
 (see Definition \ref{def: Scattering functionals}) for equations 
 \eqref{general non-linear problem on M}-\eqref{general non-linear problem on M3} satisfy
$$
S_{\M^{(1)},\gM^{(1)},a^{(1)},{\mntext d^{(1)}};t_1,{q}}(h)
=S_{\M^{(2)},\gM^{(2)},a^{(2)},{\mntext d^{(2)}};t_1,{q}}(h)
$$
when  $ h\in \mathcal D(S_{\M^{(1)},\gM^{(1)},a^{(1)},{\mntext d^{(1)}};t_1,{q}})\cap \mathcal D(S_{\M^{(2)},\gM^{(2)},a^{(2)},{\mntext d^{(2)}};t_1,{q}})$.
Then there is a diffeomorphism $\Psi:\M^{(1)} \to \M^{(2)}$ and 
a function $\gamma\in C^\infty(\M^{(1)})$ such that the metric tensors $g^{(j)}$
and the coefficients $a^{(j)}$ of the non-linear terms 
satisfy  
\begin{equation}\label{eqtrans B}
\begin{split}
& g^{(1)} = e^{2\gamma}\Psi^*g^{(2)},\\
& a^{(1)}=  e^{(\kappa-3)\gamma(x)}\Psi^*a^{(2)},
\end{split}
\end{equation}
that is, the non-linear scattering functionals uniquely determine the topology, the differentiable structure, and the conformal type of the Lorentzian manifold,
and the Lorentzian metric, and the coefficient function of the non-linear term up to the transformations in \eqref{eqtrans B}. 
\end{theorem}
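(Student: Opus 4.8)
The plan is to prove the theorem by the \emph{higher order linearization method} of \cite{KLU2018}, transplanted to the scattering setting through the Penrose compactification. The first step is the reduction indicated in the Remark following Definition~\ref{def: Scattering functionals}: the collection of scattering functionals $S_{\M^{(j)},\gM^{(j)},a^{(j)},d^{(j)};t_1,q}$, taken over all $-\pi<t_1<0$ and all $q\in\nullinf^+$, is equivalent to the family of source-to-solution maps for the conformally transformed Goursat equation \eqref{general non-linear problem1} on $\N^{(j)}\subset\Next^{(j)}$, with sources supported in compact subsets of the non-physical region $N^-$ and observations on $\nullinf^+$ (equivalently in $N^-$'s mirror $N^+$). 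The advantage of working on $\Next$ is that the radiation fields become ordinary data for a smooth metric $\gext$, the non-linear coefficient becomes $A=a\,\omega_M^{\kappa-3}$ by \eqref{ABD functions}, and — because we restrict to the concentrated past radiation fields $\mathcal B_-(R(t_1))$ — the excited waves stay away from $i_0$ and so avoid the causality violation at infinity. It therefore suffices to show that this family of source-to-solution maps determines $(\N,\gN)$ up to a conformal diffeomorphism and $A$ up to pullback, since \eqref{ABD functions} and the conformal relation between the metrics $g_N^{(j)}=(\omega_M^{(j)})^2\gM^{(j)}$ then return the transformations \eqref{eqtrans B} for a suitable $\gamma$.

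Next I would feed the map a superposition $h_-=\sum_{j=1}^{\kappa}\epsilon_j h_j$ of $\kappa$ small Goursat data and differentiate the solution $\tilde u(\epsilon)$ provided by Theorem~\ref{thm:nonlinear_goursat copy}. Writing $v_j$ for the first linearizations (the solutions of the linear Goursat problem with data $h_j$), all mixed derivatives of order $<\kappa$ propagate purely linearly and carry no interaction, whereas the $\kappa$-th mixed derivative $w=\partial_{\epsilon_1}\cdots\partial_{\epsilon_\kappa}\tilde u|_{\epsilon=0}$ solves the \emph{linear} equation
\begin{equation*}
(\square_{\gN}+B_{\gN}+D)\,w=-\kappa!\,A\,\prod_{j=1}^{\kappa}v_j
\end{equation*}
with vanishing Goursat data. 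I would choose the $h_j$ so that the $v_j$ are distorted plane waves conormal to null hypersurfaces whose associated null geodesics meet transversally at an interior point $y\in\N$. Then $A\prod_j v_j$ has a conormal singularity concentrated at $y$, and, since $A(y)=a(y)\,\omega_M(y)^{\kappa-3}\neq0$, a new spherical singularity is emitted from $y$ and propagates along its future light cone; its trace at $q\in\nullinf^+$ appears in the differentiated data whenever a future null geodesic joins $y$ to $q$.

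Varying the $\kappa$ incoming directions and the observation point $q$, I would recover for each interior $y$ the set of points of $\nullinf^+$ reached by future light from $y$, i.e.\ the light observation set of $y$. The hypothesis $J^+_\M(E)=J^-_\M(E)=\M$ guarantees that every point of $\M$ is causally connected to the infinity, so these sets are nonempty and rich enough for the reconstruction of \cite{KLU2018} to recover the smooth manifold and the conformal class of $\gN$, hence of $\gM$, together with the diffeomorphism $\Psi:\M^{(1)}\to\M^{(2)}$ and the conformal factor $e^{2\gamma}$. With the geometry fixed, the principal symbol of the singularity of $w$ at $q$ depends on $y$ only through $A(y)$ multiplied by now-known geometric factors (parallel transport and the Jacobians of the interacting geodesics); equality of the two data sets then forces $A^{(1)}=\Psi^*A^{(2)}$, which through $A=a\,\omega_M^{\kappa-3}$ gives the second line of \eqref{eqtrans B}.

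I expect the main obstacle to lie in the microlocal analysis of the $\kappa$-fold interaction together with its detection at $\nullinf^+$: one must verify that for a suitable choice of transversally intersecting null geodesics the produced conormal singularity is genuinely nonzero (its symbol a non-degenerate multiple of $A(y)$), that it survives propagation to the conformal boundary and is recorded \emph{before} any finite-time blow-up of $\tilde u(\epsilon)$, and that no spurious contributions from $i_0$ contaminate the measurement — the last point being precisely why the concentrated radiation fields $\mathcal B_\pm$ were introduced. The accompanying Lorentzian rigidity, namely that the light observation sets determine the conformal class and the differentiable structure, is the other substantial ingredient, but it is by now standard once the singularities have been localized.
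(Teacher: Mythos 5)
Your skeleton is the same as the paper's: reduce the scattering functionals to source-to-solution maps on the extension $\Next$ with sources in ${{N}}^-$ and observations in ${{N}}^+$ (this is Theorem~\ref{t:scattering_operator_determines V}), run a higher-order linearization to produce the interaction source $-\kappa!\,A\prod_j v_j$, read off light observation sets, and invoke the reconstruction of \cite{KLU2018}. The genuine gap is in the step where you assert that, by varying the incoming directions and the observation point, you "recover for each interior $y$ the light observation set of $y$", and that these sets are "rich enough" for \cite{KLU2018}. That is exactly what fails in this geometry, and it is the paper's central difficulty. The source set $\tilde\Omega_\i=({{N}}^-)^{\mathrm{int}}$ and the observation set $\tilde\Omega_\out=({{N}}^+)^{\mathrm{int}}$ are causally separated by $i_0$: by \eqref{causally disconnected} the time separation satisfies $\tau(p,q)>0$ for all $p\in\tilde\Omega_\i$, $q\in\tilde\Omega_\out$, so no light-like geodesic runs from the source region to the observation region without first passing a cut or conjugate point. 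But the interaction-and-detection machinery (the regular-intersection condition of Definition~\ref{def:regular intersection}, conditions (B) and (T), and Lemma~\ref{lem: Eq construction}) produces and records the spherical singularity, i.e. yields $\mathcal E_V(q)$, only when the interacting geodesics meet \emph{before} their first cut points. The paper's explicit example makes the failure concrete: on $\R\times\Sp^3$, with $\omega_j=(-2j\pi,0)\times\Sp^3$, one has $\mathcal E_U(\omega_1)=\mathcal E_U(\omega_2)$ because all great circles are closed geodesics, so data organized as in your proposal cannot distinguish deep interior regions; the visibility hypothesis $J^\pm_\M(E)=\M$ does not remove this obstruction.

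What the paper does instead — and what your argument is missing — is a bootstrap. Steps 1--2: using lower semicontinuity of the cut-locus function, isolate sub-regions of ${{N}}^-$ and time windows from which the emanating null geodesics have \emph{no} cut points in controlled sets, and reconstruct only the conformal type of neighborhoods $\mathcal W^\pm$ of $\nullinf^\pm$. Step 3: a gauge step — conformally transform so the two metrics become isometric on these neighborhoods; this introduces an unknown zeroth-order potential $\underline q_1$, and one must verify that the principal symbols of the linearized and interacted waves do not depend on such potentials (Lemma~\ref{lem: Lagrangian 1 wave}), so that the relevant Fr\'echet derivatives of the source-to-solution maps agree up to a smoothing error of order one. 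Steps 4--5: recover $A$ near the infinities and then \emph{reproduce} new near-field source-to-solution maps in which both the sources and the observations lie in a neighborhood $U$ of the time-like path $\hat\mu$ — i.e. manufacture exactly the local data that \cite{KLU2018} requires. Step 6: only then run the layer-stripping argument of \cite{KLU2018} inside $U$ and exhaust the space-time by letting $p_\pm\to i_\pm$. Your recovery of $A$ via principal symbols (your last step) is in the right spirit, but it too presupposes the gauge normalization and the potential-independence of symbols from Step 3, which your proposal does not address.
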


\bigskip

   \begin{remark} While working on the paper we became aware of another group of researchers, P. Hintz, A. S\'{a} Barreto, G. Uhlmann, and Y. Zhang, who were also working on non-linear inverse scattering problems, and in private communication we agreed on publishing their and our preprints on the same day. It turned out that their results in \cite{hintz2024inversenonlinearscatteringmetric} and our work are complementary, in the sense that the classes of manifolds studied in these works are disjoint. Moreover,  \cite{hintz2024inversenonlinearscatteringmetric} analyses more general non-linear equations than those considered in this paper.
    An essential difference in the papers is that the conformal compatifications of the Lorentzian manifolds studied in \cite{hintz2024inversenonlinearscatteringmetric} are globally hyperbolic spaces that have a spatial infinity that is diffeomorphic to $\Sp^{n-1}$, where the dimension of the Lorentzian manifold is $n+1$. 
   In contrast, in the present paper we consider Lorentzian manifolds whose conformal compactification has a spatial infinity that is a single point $i_0$, similarly to the Penrose compactification of the Minkowski space \cite{Penrose-republication-original}.  This makes the classes of manifolds considered in the papers different.
For example, for the Minkowski space $(\R^{1+3},\eta)$, denoted below by $M_0$,
the conformal boundary (that is, the conformal infinity) $\partial_c M_0$ is defined by using equivalence classes
of Cauchy sequences on a certain bundle $\mathcal P^1 (M_0)$ over the space-time $M_0$ so that the conformal boundary depends only on the conformal
structure of the space-time $M_0$ and is thus independent of the choice of conformal compactification; see \cite[Section 2]{Schmidt}.
By results of B. Schmidt, see \cite{Schmidt}, the conformal boundary $\partial_c M_0$ of the
Minkowski space can be identified with the 
boundary of $N$ in $\R\times \mathbb S^3$, that is,
with $\mathcal I^-\cup \mathcal I^+\cup i_+\cup i_-\cup i_0$,
where  the light-like infinities $\mathcal I^-,\mathcal I^+$ are the subsets defined above of the boundary of $N\subset \R\times \mathbb S^3$, see \eqref{def:null_infinities}, 
and  the future time-like infinity $i_+$, the past time-like infinity $i_-$, and the spatial infinity $i_0$ are the points of the boundary of $N$, see \eqref{def:other_infinities}.
This implies that that conformal boundary of the Minkowski space contains only three points,  $ i_+,i_-$, and $ i_0$,
which are not limits of light-like geodesics of $M_0$ (that is, points for which there exists a light-like geodesic $\gamma\subset M_0$
such that  the equivalence class of the  point is a limit point 
of the lift of the curve $\gamma$ on $\mathcal P^1(M_0)$).
Hence, for the Minkowski space $\R^{1+3}$ there do not exist two conformally flat extensions where the spatial infinity is on one extension a single point and the sphere $\mathbb S^2$ on the other extension.  
 We refer the reader to the monograph by R.  Wald,  \cite[Section 11.1]{Wa1984}, for in-depth discussion on the structure of spatial infinity on Lorentzian spaces and its role in physical models.\end{remark}

\begin{figure}[ht!]
\centering
\begin{tikzpicture}
\path[draw=black](0,0) rectangle (4,6);
\draw[draw=black] (0,0) -- (4,3);
\draw[draw=black] (0,6) -- (4,3);
\draw (2,0) node [above] {$\R\times\Sp^3$};
\draw (4,0) node [right] {$\mu$};

\draw[draw=red] (1.5,1.125) node [right] {}  circle (1.5pt);
\draw[draw=red,thick,dashed] (1.5,1.125) -- (0,1.125);
\draw[draw=red,thick] (1.5,1.125) -- (3.5,10.5/4);
\draw[draw=red] (3.5,10.5/4) node [right] {} circle (1.5pt);
\draw (2.2,1.6) node [right] {$\supp(h_-)$};

\filldraw[draw=red,fill=red] (2.4,4.2) node [right] {$q$} circle (1.5pt);
\filldraw[draw=red,fill=red] (1,5.25) node [right] {$q_1$} circle (1.5pt);
\draw[draw=red,thick,dashed] (1,5.25) -- (0,4.4);

\filldraw[draw=black,fill=black] (0,0) node [left] {$i_-$} circle (1.5pt);
\filldraw[draw=black,fill=black] (4,3) node [right] {$i_0$} circle (1.5pt);
\filldraw[draw=black,fill=black] (0,6) node [left] {$i_+$} circle (1.5pt);
\node  [waves, right, rotate=120] at (2.5,2.5*3/4) {};
\end{tikzpicture}
\hfill
\includegraphics[height=6.5cm]{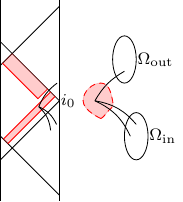}
\caption{\label{fig: sources}
{\bf Left}: Visualization of setting where scattering functionals are defined. The in-going radiation field $h_-$ is supported on a relatively compact subset of $\nullinf^-$. When $\|h_-\|<\e(t_1,{q_1})$, the solution $u$ of the scattering problem is defined in the past of the point $q_1$. The scattering functional $S_{t_1,{q}}(h_-)$ takes the value of the out-going radiation field $h_+$ evaluated at the point $q<q_1$. 
{\bf Middle:} In our spacetime sources will be produced in the nonphysical past (the lower triangular region below past null infinity). The nonlinear interaction of waves produces new waves in the physical region (shaded red), and the interactions are observed in the nonphysical future. The sources and receivers are separated by the point $i_0$. {\bf Right}: 
Schematic picture on the reconstruction. Sources located in $\Omega_\i$ produce waves that interact inside the red shaded region $D$ and cause signals that can be observed in the causally separated domain $\Omega_\out$. The closures of the domains 
 $\Omega_\i$,  $\Omega_\out$ and $D$ are disjoint. This causes difficulties that are encountered also in the figure on the figure on the left, and we overcome this issue by introducing a reconstruction algorithm which works in the situation 
 when the light-like geodesics connecting  $\Omega_\i$ to  $D$ do not have conjugate or cut points.
}
\end{figure}

\begin{figure}
    \centering
    \includegraphics[width=0.63\textwidth]{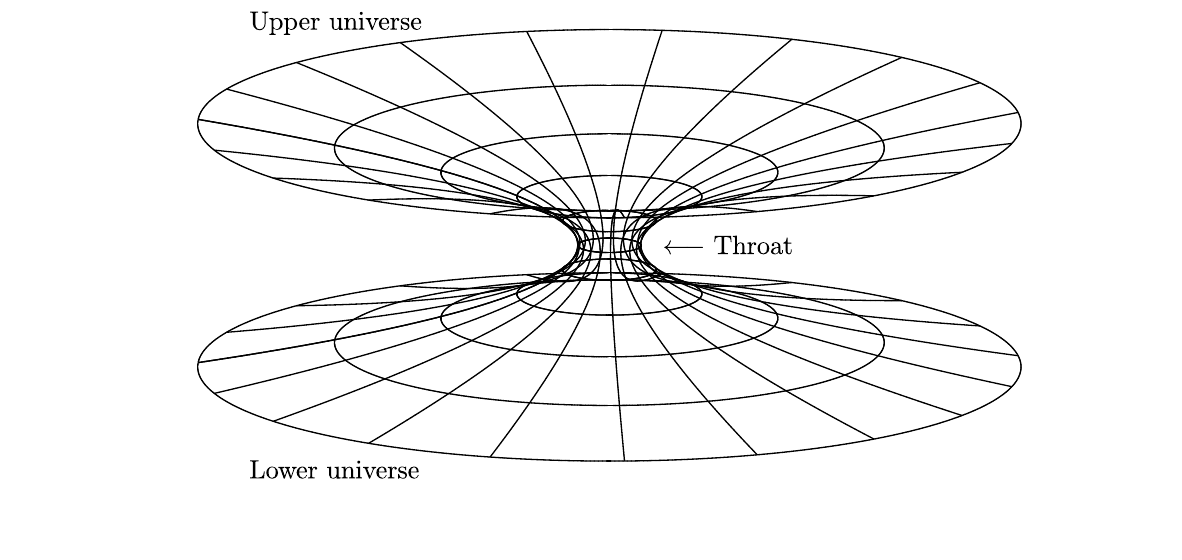}
    \hfill
    \includegraphics[width=0.34\textwidth]{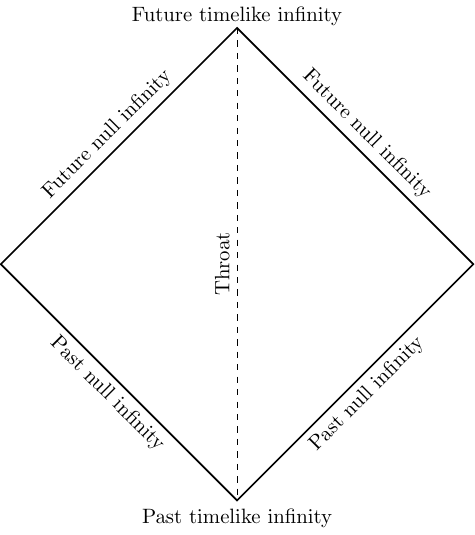}
    \caption{Left: {Morris-Thorne} wormhole manifold, see \cite{MorrisThorne,MorrisThorne1}, is a static universe (that is non-physical due to negative mass)
    of the form $M=\R\times N_0$, where $N_0$ is illustrated in the figure. Right: Penrose diagramm of a (non-physical)  traversable wormhole, see \cite{Garattini,Bambi,visser}. Note that this resembles the Penrose diagram of a Schwarzschild blackhole, though in that case the consideration of the point $i_0$ is more complicated due to singularities in the compactification, see \cite{blackhole-compactification}.} 
    \label{fig:wormhole2}   
\end{figure}

\subsubsection{Properties of the extended space-time}\label{sec:properties_for_scattering}

To define certain useful points on $ \R\times \Sp^3$,
let {$\hat \mu(s)=(s,\SP)$ be the path $\hat\mu:[-2\pi,2\pi]\to \R\times \Sp^3$ associated to the South Pole $\SP$ of the sphere $ \Sp^3$, and let  $0<s_{+2} <2\pi$ and $0>s_{-2} >-2\pi$.
 We denote $p_{+2} = \hat\mu(s_{+2}) $ and $p_{-2} =\hat \mu(s_{-2})$.
Moreover, let $s_{-2}<s_-<0<s_+<s_{+2}$, see Figure \ref{fig:penrose} (Left). We denote }
\ba
p_-=\hat\mu(s_-)\quad \hbox{and}\quad p_+=\hat \mu(s_+).
\ea

Let us next consider the extended manifold 
\beq\label{extended manifold copy}
\Next=\N\cup {{{N}}}^+\cup {{{N}}}^-.
\eeq
The following lemma is essential to the direct problem.

\begin{lemma}\label{lem: Next is globally hyperbolic}
The manifold $\Next$ is globally hyperbolic.
\end{lemma}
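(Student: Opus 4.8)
The plan is to verify global hyperbolicity through the criterion of Bernal and S\'anchez: a connected, time-oriented Lorentzian manifold is globally hyperbolic if and only if it is causal and the causal diamonds $J^+_{\Next}(p)\cap J^-_{\Next}(q)$ are compact for all $p,q\in\Next$. Connectedness and time-orientability of $\Next$ are immediate from the construction, so the two substantive points are causality and compactness of diamonds. The essential structural fact I would exploit throughout is that the null infinities $\nullinf^\pm$ are achronal null hypersurfaces separating the physical region $\N$ from the nonphysical caps: $N^+$ lies to the future of $\nullinf^+$ and $N^-$ to the past of $\nullinf^-$, while $(\N,\gN)$ is conformal to the globally hyperbolic $(\M,\gM)$ and is therefore itself globally hyperbolic, since global hyperbolicity depends only on the conformal (causal) class.

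For causality I would argue that a future-directed causal curve crosses each of $\nullinf^+$ and $\nullinf^-$ at most once: on these hypersurfaces the metric is the standard product metric of $\R\times\Sp^3$, for which the time coordinate $T$ (i.e.\ the function $\mathbf t$) is a temporal function, so a causal curve can pass from $\N$ into $N^+$ only toward the future and from $N^-$ into $\N$ only toward the future. Consequently a closed causal curve would have to remain in one of $\N$, $N^+$, $N^-$; this is impossible in $\N$ (which is causal) and in the caps (where $T$ strictly increases along future causal curves). Hence $\Next$ is causal.

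For the compactness of $D=J^+_{\Next}(p)\cap J^-_{\Next}(q)$, I would distinguish cases. If $p,q\in\N$, the one-way crossing property forces $D\subset\N$: a point $r\in D\cap N^+$ would satisfy $q\in J^+_{\Next}(r)$, but $J^+_{\Next}(r)\cap\N=\emptyset$, contradicting $q\in\N$, while $J^+_{\Next}(p)\cap N^-=\emptyset$ for $p\in\N$. Thus $D=J^+_{\N}(p)\cap J^-_{\N}(q)$, which is compact. In general I would split $D$ into its intersections with $N^-$, $\N$, and $N^+$. Each cap piece is bounded in the time coordinate by $T(p)\le T\le T(q)$, hence stays away from the removed and possibly singular points $i_\pm$ at $T=\pm\pi$; being a closed, $T$-bounded subset of the spatially compact cylinder $\R\times\Sp^3$, it is compact and contained in $\Next$. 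The physical piece $D\cap\N$ is contained in $J^+_{\Next}(K)\cap J^-_{\Next}(K')$, where $K=J^+_{\Next}(p)\cap\nullinf^-$ and $K'=J^-_{\Next}(q)\cap\nullinf^+$ are the compact ``shadows'' of $p$ and $q$ on the past and future null infinities.

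The main obstacle is to show that this physical diamond $J^+_{\Next}(K)\cap J^-_{\Next}(K')\cap\N$ is compact, i.e.\ that causal curves running through the physical region from the compact set $K\subset\nullinf^-$ to the compact set $K'\subset\nullinf^+$ sweep out a relatively compact region, with no sequence escaping to ``infinity'' or toward the singular points $i_\pm$. This is exactly where the hypotheses enter: the asymptotically Minkowskian structure of $E$ makes the causal geometry near $\nullinf^\pm$ standard, so the shadows $K,K'$ are genuinely compact and bounded away from $i_0,i_\pm$, while the visibility condition $J^+_\M(E)=J^-_\M(E)=\M$ sandwiches the whole physical region causally between the two infinities and rules out escape despite the possibly nontrivial topology of $\M$. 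Combined with global hyperbolicity of $\N$ this gives compactness of $D\cap\N$, and together with the compact cap pieces and causality the Bernal--S\'anchez criterion yields that $\Next$ is globally hyperbolic. An equivalent, more geometric route would be to produce the Cauchy surface directly as $\Sigma\cup\{i_0\}$, where $\Sigma$ is a Cauchy surface of $\N$ arranged to agree with the slice $\{T=0\}$ near $E$ and $\{i_0\}$ is the one-point completion at spatial infinity; the same visibility condition is what guarantees that this compact hypersurface is met exactly once by every inextendible causal curve of $\Next$.
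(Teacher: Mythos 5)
Your overall strategy (verifying the paper's definition of global hyperbolicity directly: causality plus compactness of causal diamonds, as in Bernal--S\'anchez) is legitimate, and your causality argument, the case $p,q\in\N$, and the treatment of the cap pieces are essentially sound. But there is a genuine gap at precisely the step you yourself call ``the main obstacle'': compactness of $J^+_{\Next}(K)\cap J^-_{\Next}(K')\cap\N$ does \emph{not} follow from ``global hyperbolicity of $\N$ combined with the visibility condition.'' Global hyperbolicity of $\N$ yields compactness only for diamonds whose end-sets lie in $\N$ itself, whereas your shadows $K\subset\nullinf^-$ and $K'\subset\nullinf^+$ lie on the boundary of $\N$ inside $\Next$; bridging this requires pushing $K,K'$ slightly into $\N$ along causal curves using the smooth extension $g_{\hat V}$ near $\nullinf^\pm\cup\{i_0\}$ and controlling the collar separately, none of which appears in your text. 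Without such an argument, nothing you wrote excludes a sequence $x_n\in D\cap\N$ escaping every compact subset of $\Next$ (e.g.\ drifting toward the deleted, possibly singular points $i_\pm$, which are not points of $\Next$). Two further inaccuracies compound this: the shadows are \emph{not} bounded away from $i_0$ (for $p\in (N^-)^{\mathrm{int}}=I^-_{\Next}(i_0)$ one has $p\ll i_0$, so $J^+_{\Next}(p)\cap\nullinf^-$ accumulates at $i_0$), and the three pieces $D\cap N^-$, $D\cap\N$, $D\cap N^+$ need not be individually compact --- sequences in $D\cap\N$ may converge to points of $\nullinf^\pm\cup\{i_0\}$ --- so only their union can be expected to be compact. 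Finally, the visibility condition $J^+_\M(E)=J^-_\M(E)=\M$, which you repeatedly invoke as the decisive hypothesis, plays no role in the paper's proof of this lemma (it is needed for the inverse problem, not for global hyperbolicity of $\Next$), so it cannot be the mechanism that ``rules out escape.''

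For comparison, the paper avoids diamonds altogether: it takes a Cauchy surface $\Sigma$ of $\N$ (which exists because $\N$ is conformal to the globally hyperbolic $\M$) and proves directly that $\Sigma\cup\{i_0\}$ is a Cauchy surface of $\Next$, then invokes the characterization that a space-time is globally hyperbolic if and only if it admits a Cauchy surface. The substantive work there is (a) showing that an inextendible causal curve of $\Next$ meeting $\N$ restricts to an inextendible causal curve of $\N$, hence meets $\Sigma$ exactly once and never reaches $i_0$, and (b) showing that an inextendible causal curve avoiding $\N$ must pass through $i_0$ exactly once, via a convergence argument inside the standard cylinder $\R\times\Sp^3$. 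You gesture at this route in your closing sentences, but note that $\Sigma\cup\{i_0\}$ need not be compact (the manifold may have several ends), and again it is not visibility that closes the argument. The cleanest repair of your proposal is to carry out this Cauchy-surface argument in full; the diamond-compactness route can likely be completed as well, but only with the boundary-collar analysis described above.
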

The proof of Lemma~\ref{lem: Next is globally hyperbolic} is postponed to later.

\subsection{Examples}\label{sec: examples of spacetimes}

We emphasize that in Definition \ref{def:asymptotically nice infinity A} we do not assume that the manifold $\M$ has a Cauchy surface $\Sigma$ for which $\Sigma\setminus E$ is compact. Thus the manifold $\M$ may have several infinities of which at least one is an asymptotically Minkowskian infinity.
\medskip

\noindent {\bf Example 1.}
 The Lorentzian product manifold $\R\times (\R^3\# \R^3)$
has an asymptotically Minkowskian infinity (in fact, it has two asymptotically Minkowskian infinities, but it suffices to define the scattering functionals by considering measurements  only one infinity), see Figure  \ref{fig:wormhole2}.
{\newnewtex Recall that all globally hyperbolic
 manifolds are homeomorphic to a product space $\R\times N_1$. Any differentiable
 manifold $N_1$ has a complete Riemannian metric, and using this metric, one can construct 
 a Lorentzian metric  on $\R\times (\R^3\# N_1)$ that is globally hyperbolic
 and has an asymptotically Minkowskian infinity that is visible from the infinity.
 Thus Definition \ref{def:asymptotically nice infinity A} allows all such manifolds where
$N_1$ has a general topology.}
 
\begin{figure}[ht!]
\centerline{
\includegraphics[height=2.5cm]{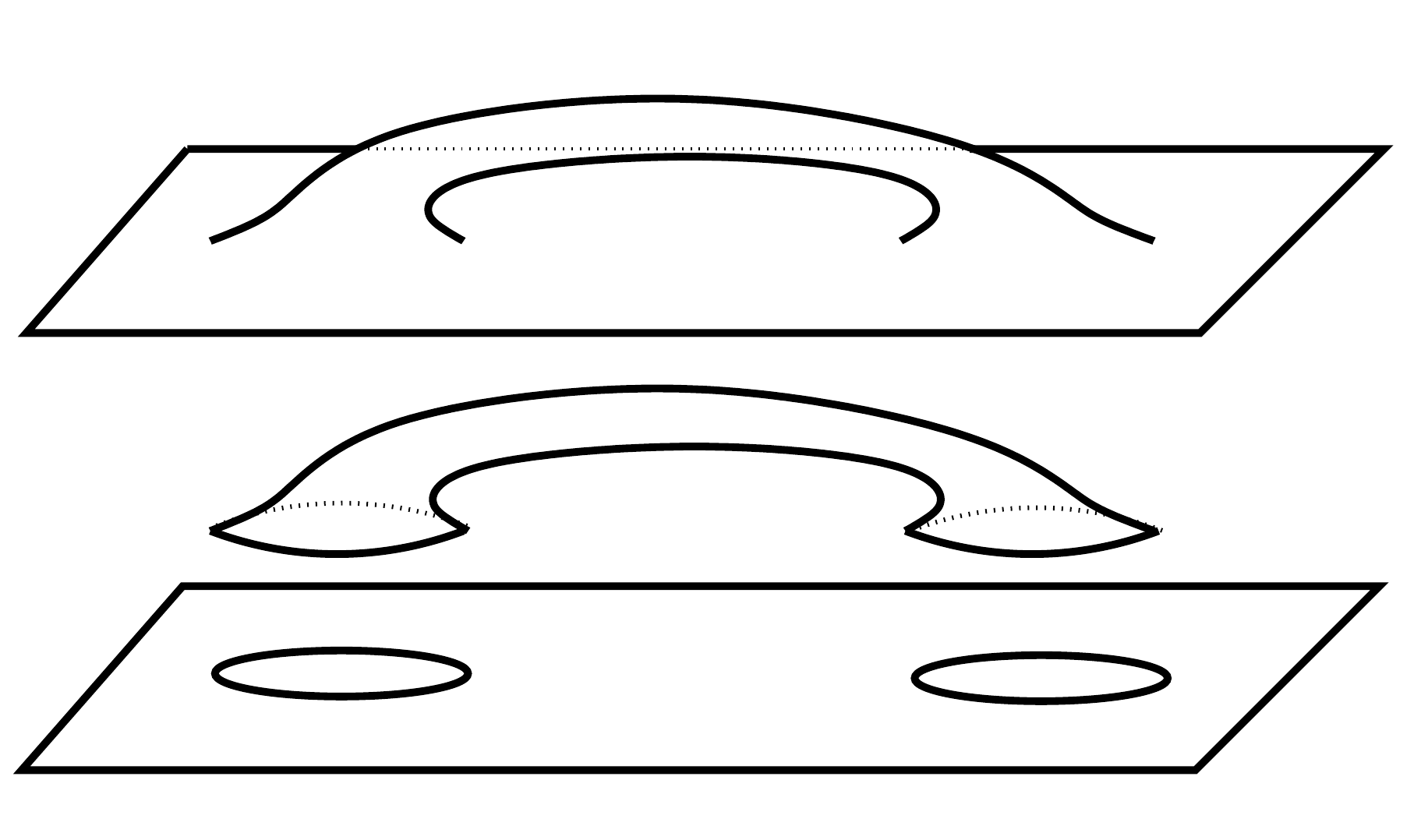}
    }
\caption{
 We can consider product space-times $M=\R\times N_0$ where $N_0$ is a 3-dimensional Riemannian manifold with non-trivial topology or several ends.  One of the ends of $N_0$ has to be asymptotically Euclidean so that the manifold $M$ has an asymptotically Minkowskian infinity. The figure visualizes a (2-dimensional) Riemannian manifold $N_0$ that is obtained by gluing a handlebody in the Eucl\-id\-ean space 
}
\label{fig:product manifold1}
\end{figure}

\medskip

\noindent {\bf Example 2.}  
Let us consider product space-times $M=\R\times N_0$ where $(N_0,g_{N_0})$ is a complete 3-dimensional Riemannian manifold with several ends of which at least one is be asymptotically Euclidean (a Schwartz class perturbation of the Euclidean metric).
Also, let $g$ on $M$ be a Lorentizan time-oriented metric that coincides with $ -dt^2+g_{N_0}$
outside a compact set then the manifold $M$ has an asymptotically Minkowskian infinity
(see Figure \ref{fig:product manifold1}). 

\medskip

\noindent 
{\bf Example 3.}
Let us consider a Lorentzian manifold {\mnewtext   $(M_0,g_{r_s})$ having a locally Schwarz\-schild event horizon, see \cite[p. 376]{ONeill}, 
and a Minkowskian infinity. 
An example of such a space is
$M_0=\R\times (\R^3\setminus \overline {B_{\R^3}(0,r_s)})$,
where $r_s>0$ is a parameter (the Schwarz\-schild radius) and we use the coordinates
$(t,r,\varphi,\theta)$, where $t\in \R$ is the Schwarzschild time coordinate and $r>r_s$ is the Schwarzschild 
radial coordinate, determined by the standard spherical coordinates $(r,\varphi,\theta)$ of $\R^3$, see \cite[p. 364-365]{ONeill}.
In these coordinates, the metric $g_{r_s}$ of  $M_0$ is given  by} 
\beq\label{eq: mod Schw.}
g_{r_s}\hspace{-1mm}=\hspace{-1mm}-\hspace{-1mm}\left(1-\phi(r){\frac {r_{\mathrm {s} }}{r}}\right)\,dt^{2}+\left(1-\phi(r){\frac {r_{\mathrm {s} }}{r}}\right)^{-1}\,dr^{2}+r^{2}\left(d\theta ^{2}+\sin ^{2}(\theta) \,d\varphi ^{2}\right)\hspace{-15mm}
\eeq
where $r_s>0$ is the Schwarzschild radius 
and $\phi\in C^\infty((r_s,\infty))$
is a {\mnewtext   function such that $0\le \phi\le 1$ and that $\phi(r)=1$ for $r<2r_s$ and $\phi(r)=0$ for $r>4r_s$.
Note that  in the region $r>4r_s$ the metric tensor $g_{r_s}$ coincides with the metric of the Minkowski space
and in the region $r\in (r_s,2r_s)$ the metric tensor $g_{r_s}$ coincides with the metric of the Schwarzschild space.
In particular, $(M_0,g_{r_s})$ contains, as an isometric subset, the region $r\in (r_s,\frac 32 r_s)$ that in
the Schwarzschild black hole is the region between the event horizon and the photon sphere $r=\frac 32 r_s$.
As stated in the introduction, this and the other examples of Lorentzian manifolds do not solve the Einstein equations for physical matter model --- they are purely examples of Lorentzian space-times, see Figure \ref{fig:black holes} (left).}

By using the fact that the exterior of the Schwarzschild black hole is a globally hyperbolic manifold,
one  sees first for the space-spherically symmetric sets $S=[-T,T]\times \p B_{\R^3}(0,r)$ that 
$J^+(S)\cap J^-(S)$ are compact and then that 
the  space-time $(M_{{0}},g_{r_s})$, where $M_{{0}}=\R\times (\R^3\setminus B_{\R^3}(0,r_s))$, is a globally hyperbolic Lorentzian manifold. Alternatively, we observe that, in the sense of \cite{Geroch}
$g_{r_s}$ is causally dominated by $f^*g_{Sc}$, denoted
 $g_{r_s}<f^*g_{Sc}$, where $g_{Sc}$ the standard Schwarzschild metric in the exterior of the event horizon in $M_0$
(given by formula \eqref{eq: mod Schw.} when $\phi$ is identically 1) and $f:M_0\to M_0$ is a map that scales the time variable by $f(t,r,\varphi,\theta)=(4t,r,\varphi,\theta)$, and 
therefore by \cite[Thm.\ 12]{Geroch}, $(M_{{0}},g_{r_s})$,
also, as the subset  $\R\times (\R^3\setminus B_{\R^3}(0,4r_s))$  is isometric to a domain of the Minkowski space, we see that  $(M_{{0}},g_{r_s})$ has an asymptotically
Minkowskian infinity, $E=\{(x^0,x')\in \R^{1+3}:\ c_0|x^0|\le |x'|,\ |x'|>r_0\}$
with suitable chosen $0<c_0<1$ and $r_0>0$. It is also easy to see that this infinity  is visible in the whole space.
We will use this in the example below.
\medskip

\begin{figure}[ht!]
\centerline{
    \includegraphics[height=6cm]{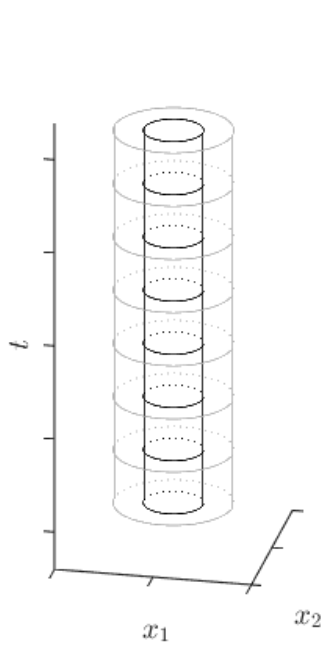}
       \hspace{1cm}
    \includegraphics[height=6cm]{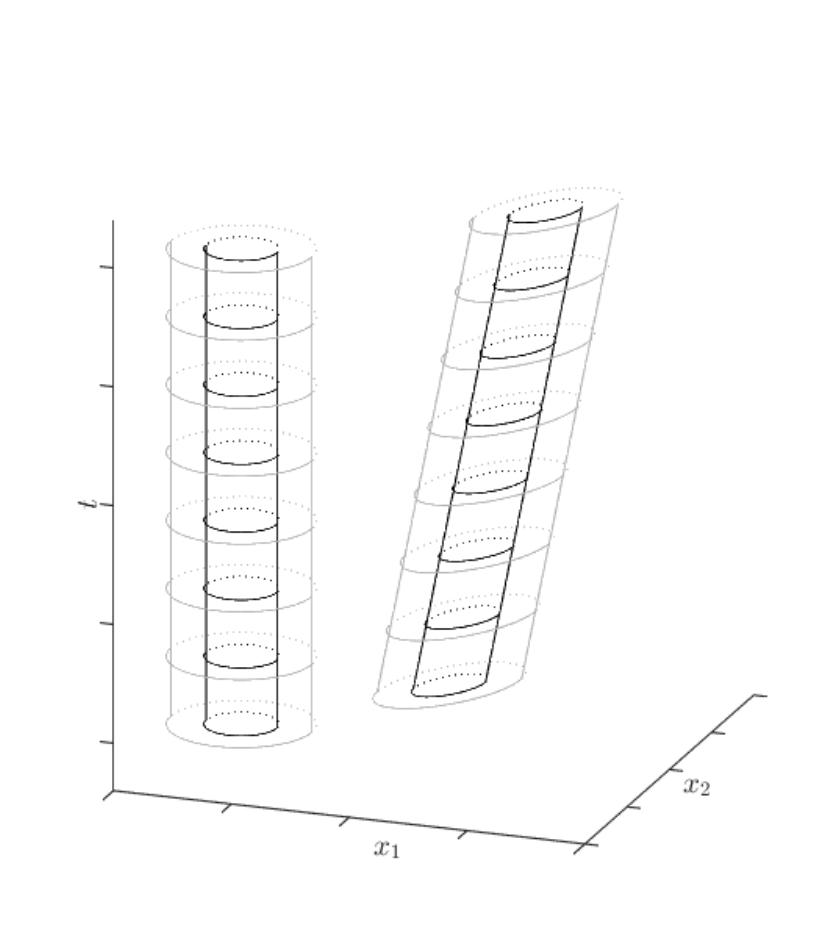}
    }

\caption{\label{fig:black holes}
{\mnewtext  
{\bf Left.} {\mnewtext  The figure shows the  $(1+2)$ dimensional spacetime $\R\times (\R^2\setminus 
\overline {B_{\R^2}(0,r_s)})$
that is analogous to the $(1+3)$ dimensional spacetime $M_0=\R\times (\R^3\setminus \overline {B_{\R^3}(0,r_s)})$,  see Example 3.
In this space-time, we consider the metric $g_{r_s}$ given in \eqref{eq: mod Schw.}. The surface $r=r_s$ is visualized as a black cylinder
and the grey cylinder is the surface $r=2r_s$. The function $\phi(r)$ in \eqref{eq: mod Schw.} is
equal to one for $r\in (r_2,2r_s)$ and thus space-time $(M_0,g_{r_s})$ contains in this region a locally Schwarzschild
event horizon. The function $\phi(r)$ vanishes for $r>4r_s$, that is, outside the grey cylinder. In this region the metric $g_{r_s}$ coincides with the metric tensor of the Minkowski space.
The coordinate axis in the figure
are $(t,x^1,x^2)$ are %
the standard
Minkowski coordinates in $r>4r_s$.
{\bf Right.} In Example 4, we consider a toy model for a space that has
 two moving black holes, and energy density which may have both positive and negative values. The figure 
 visualizes the space-time $(M_1,g_{\ell_0,\ell})$, see 
 \eqref{metric g ell ell0} and \eqref{manifold M0}, where the set $M_1\subset \R\times \R^3$
has the Lorentzian metric $g_{\ell_0,\ell}$.
In the figure, the space-time $M_1$ is visualized as the exterior of the black cylinders.  The space-time $(M_1,g_{\ell_0,\ell})$ has an (asymptotically) Minkowskian infinity and two locally Schwarzschild event horizons. 
 Note that the metric of the space-time coincide
 with the Minkowski metric outside the grey cylinders.
 }}
}
\end{figure}

\noindent
{\bf Example 4.}
{\mnewtext  Let $g_{r_s}$ be the  Lorentzian metric tensor
of the form 
\eqref{eq: mod Schw.} in $\R\times (\R^3\setminus B_{\R^3}(0,r_s))$,
where $\phi\in C^\infty([r_s,\infty))$ is such that $\phi(r)=1$ for $r_s<r<R$ and $\phi(r)=0$ for $r>2R$ where
$R=2r_s$. Moreover, let $\Lambda_{\ell}\in O(1,3)$ be a Lorentz 
transformation $\Lambda_{\ell}:\R\times \R^3\to \R\times \R^3$
that maps the line $\ell_0=\{(t,0,0,0)\in \R\times \R^3\}$ to
the affine time-like line $\ell\subset \R\times \R^3$.
For $r>0,$ we denote by $V_r$ the closed ``cylinder''
$V_r=\R\times  \overline{B_{\R^3}(0,r)}\subset \R^4$.
When the line $\ell$ is such that the sets
$\Lambda_{\ell}(V_{2R})\subset \R^{1+3}$ and $V_{2R}\subset \R^{1+3}$ do not intersect,
we define the Lorentzian metric
\beq\label{metric g ell ell0}
g_{\ell_0,\ell}:=
\begin{cases}
  (\Lambda_{\ell})_*g_{r_s},&\quad \hbox{in }
  (\R\times \R^3)\setminus (V_{2R}\cup \Lambda_{\ell}(V_{r_s})),\\
 \ \  \ g_{r_s},&\quad \hbox{in } V_{2R}\setminus V_{r_s}.
\end{cases}
\eeq
The  Lorentzian manifold $(M_1,
g_{\ell_0,\ell})$, where 
\beq\label{manifold M0}
M_1=(\R\times \R^3)\setminus (V_{r_s}
\cup \Lambda_{\ell}(V_{r_s}))\subset \R\times \R^3
\eeq
is a globally hyperbolic Lorentzian manifold with an asymptotically
Minkow\-skian infinity that is visible in the whole space.
Observe that this space-time has one Minkowskian end and the metric tensor $g_{\ell_0,\ell}$ is isometric to the metric tensor of
a Schwarzschild space-time in the domain $ V_{R}\setminus V_{r_s}$ as well as in the domain
 $\Lambda_{\ell}( V_{R}\setminus V_{r_s})$. In these two regions, the space-time has locally
Schwarzschild event horizons, see Figure \ref{fig:black holes} (Right).
If we operate with an additional Lorentz 
transformation $\Lambda_{\ell'}\in O(1,3)$ to the space-time $(M_1,
g_{\ell_0,\ell})$, we obtain   the space-time $$
( (\R\times \R^3)\setminus  (\Lambda_{\ell'}(\Lambda_{\ell}(V_{r_s}))\cup  \Lambda_{\ell'}(V_{r_s}))\, ,\, (\Lambda_{\ell'})_*g_{\ell_0,\ell})
$$
that is a toy model to a space-time with two black holes
that move along the lines $\ell'$ and $\Lambda_{\ell'}(\ell)$. 
Iterating
the above construction we obtain an asymptotically Minkowskian space-time that has several Schwarzschild (bifurcate) event horizons. 
As before, 
this space-time do not solve Einstein's field equations.}
On physically realistic space-times obtained
by gluing black-hole and other vacuum space-times, see \cite{Hintz-glueing} and references therein,
and  \cite{VasyA} on scattering from physical black holes.

On the above examples, {\mnewtext  as well on any other space-time that satisfies our assumptions}, we can consider the non-linear wave equation
\begin{equation}\label{eq:nonlinear wave equation compact perturbation}
\square_g u(x) + {\mntext d(x)u(x)}+
{\mltext a(x)}u(x)^{\kappa} = 0,\quad x\in M
\end{equation}
where $\kappa \ge  4 $ and 
$(M,g)$ is
a perturbed Lorentzian 
product space with an Euclidean infinity.
 Then the family of the scattering functionals  $S_{\M,\gM,a;t_1,{q}}$, given for all $-\pi<t_1<0$
 and $q\in \nullinf^+$ 
determines the manifold $\M$, and the {\mltext conformal class of the} metric $\gM$ on $\M$,
and the pair $(\gM,A)$ up to a conformal transformation.

\subsection{Reduction of the scattering measurements to the near field measurements in the extended space-time}\label{sec:ideas_for_scattering}

Next, we will consider the equation
\begin{equation}\label{eq:nonlinear wave equation in Penrose 2}
\begin{cases}
&(\square_{\gext}+B)w + Aw^{\kappa}=f,\quad
\text{in } x\in I^-_{\Next}(p),\\
&\supp( w)\subset J^+_{\Next}(\supp(f))
\end{cases}
\end{equation}
where $p\in \Next.$ To make  our conformal transformations below
possible,
we will allow $B$ to be a general smooth function. In particular,
we will consider the case when $B=B_g+{\mntext D}$, {\mntext and $B_g$ and $D$ are defined in \eqref{ABD functions}.}

\begin{lemma}\label{Lemma L-maps} Let  ${\kappa}\ge 1$ and $k\ge 4$. %
Moreover, let $A,B\in C^\infty(\Next)$,  $K\subset \Next$ be a compact set and $p\in \Next.$
Then there is $\eps>0$ such that for all 
$f\in H^k_0(K)$, $\norm{f}_{H^k(\Next)} < \eps$
there exists a unique (small) solution $w\in H^{k+1}(I^-_{\Next}(p))$
  to  the equation \eqref{eq:nonlinear wave equation in Penrose 2}.
Moreover, the solution $w$ depends in $ H^{k+1}(I^-_{\Next}(p))$ continuously on the source $f
\in H^k_0(K)$ and
\[
\Vert w \Vert_{H^{k+1}(I^-_{\Next}(p))} \leq C \Vert f \Vert_{H^k(\Next)},
\]
where $C$ is independent of $f$.
\end{lemma}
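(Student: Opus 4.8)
The plan is to solve \eqref{eq:nonlinear wave equation in Penrose 2} by a contraction-mapping argument built on the linear theory for $\square_{\gext}+B$, with the global hyperbolicity of $\Next$ (Lemma~\ref{lem: Next is globally hyperbolic}) entering in an essential way. First I would note that, since $\Next$ is globally hyperbolic and $K$ is compact, the causal diamond $\mathcal{D}:=J^+_{\Next}(K)\cap J^-_{\Next}(p)$ is compact. By finite propagation speed every object in the problem is effectively supported in $\mathcal{D}$: if $\supp(f)\subset K$ and $w$ obeys the support condition in \eqref{eq:nonlinear wave equation in Penrose 2}, then the portion of $\supp(w)$ relevant to $I^-_{\Next}(p)$ lies in $\mathcal{D}$, and the restriction to $I^-_{\Next}(p)$ of the solution depends only on the restriction of the source to $\mathcal{D}$. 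Hence the whole construction reduces to the fixed compact set $\mathcal{D}$, on which $A,B$ are smooth and therefore bounded with all derivatives.

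\emph{Linear step.} I would set up the retarded (forward) solution operator $E_+$ for $\square_{\gext}+B$: for $F$ supported in $\mathcal{D}$ there is a unique $v=E_+F$ with $(\square_{\gext}+B)v=F$ and $\supp(v)\subset J^+_{\Next}(\supp F)$, by the classical existence--uniqueness theory for wave equations on globally hyperbolic Lorentzian manifolds (Leray, H\"ormander, B\"ar--Ginoux--Pf\"affle). The quantitative input is the energy inequality: choosing a Cauchy temporal function that foliates $\mathcal{D}$ by spacelike slices, differentiating the equation, commuting derivatives through $\square_{\gext}+B$ (the commutators being lower order with coefficients bounded on $\mathcal{D}$), and applying Gr\"onwall's inequality yields
\[
\|E_+F\|_{H^{k+1}(I^-_{\Next}(p))}\le C\,\|F\|_{H^k(\Next)},
\]
with $C=C(\mathcal{D},\gext,B,k)$ independent of $F$. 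Thus $E_+:H^k\to H^{k+1}$ gains one derivative, matching the indices in the statement.

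\emph{Nonlinear step.} Define $\Theta(w)=E_+\big(f-Aw^{\kappa}\big)$, restricted to $I^-_{\Next}(p)$; this is well defined by the causality remark above. Since $\dim\Next=4$ and $k+1\ge 5>2$, the space $H^{k+1}(\mathcal{D})$ is a Banach algebra and embeds into $C^0$, so on the ball $B_\delta=\{\|w\|_{H^{k+1}}\le\delta\}$ one has the Moser-type bounds $\|Aw^{\kappa}\|_{H^k}\le C_\ast\,\delta^{\kappa}$ and $\|A(w_1^\kappa-w_2^\kappa)\|_{H^k}\le C_\ast\,\delta^{\kappa-1}\|w_1-w_2\|_{H^{k+1}}$, with $C_\ast$ depending on $\|A\|_{C^{k+1}(\mathcal D)}$. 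Combining with the linear estimate gives $\|\Theta(w)\|_{H^{k+1}}\le C\|f\|_{H^k}+CC_\ast\delta^{\kappa}$ and $\|\Theta(w_1)-\Theta(w_2)\|_{H^{k+1}}\le CC_\ast\delta^{\kappa-1}\|w_1-w_2\|_{H^{k+1}}$. For $\kappa\ge 2$ the Lipschitz constant tends to $0$ with $\delta$, so fixing $\delta$ small and then $\eps$ with $C\eps\le\delta/2$ makes $\Theta$ a self-map of $B_\delta$ and a contraction whenever $\|f\|_{H^k}<\eps$; for $\kappa=1$ the term $Aw$ is linear and is absorbed into $B$, reducing the problem to $E_+$ directly. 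The Banach fixed point theorem then yields the unique small solution $w$, whose causal support condition holds since $\supp(\Theta(w))\subset J^+_{\Next}(K)$. Continuous dependence on $f$ is the standard Lipschitz dependence of a contraction's fixed point on a parameter, and the bound $\|w\|_{H^{k+1}}\le C\|f\|_{H^k}$ follows by substituting $\|Aw^\kappa\|_{H^k}\le C_\ast\delta^{\kappa-1}\|w\|_{H^{k+1}}$ into the identity $w=E_+(f-Aw^\kappa)$ and absorbing the small term.

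\emph{The main obstacle.} The fixed-point iteration is routine; the real work is the quantitative linear energy estimate on the truncated region $I^-_{\Next}(p)$ with the retarded support condition. The two difficulties are that $\Next$ is non-compact and that one must track the exact Sobolev gain $H^k\to H^{k+1}$ while keeping the constant uniform in $F$. I expect to handle this by localizing to the compact causal diamond $\mathcal{D}$ via finite propagation speed, foliating it by Cauchy slices, and invoking the standard higher-order energy estimates for second-order hyperbolic operators, the smoothness of $A$ and $B$ on $\mathcal{D}$ ensuring that the lower-order terms do not affect well-posedness.
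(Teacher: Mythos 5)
Your proposal is correct and takes essentially the same route as the paper: the paper does not write out a proof of this lemma but derives it from standard local well-posedness results (\cite[Prop.\ 9.12 and 9.17]{Ringstrom}, \cite[Thm.\ III]{HKM}, \cite[App.\ III, Thm.\ 3.7 and 4.2]{ChBook}) combined with energy estimates for \eqref{eq:nonlinear wave equation in Penrose 2}. Those cited results are established by precisely the scheme you reconstruct — localization to a compact causal region of the globally hyperbolic manifold $\Next$, higher-order energy estimates for the linear operator $\square_{\gext}+B$ giving the $H^k\to H^{k+1}$ gain, and a contraction argument exploiting the Banach algebra property of $H^{k+1}$ in dimension $1+3$.
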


The proof of
Lemma~\ref{Lemma L-maps}  can be obtained using the proof of the 
results in  \cite[Prop.\ 9.12 and 9.17]{Ringstrom}, by using the energy estimates for the wave equation 
\eqref{eq:nonlinear wave equation in Penrose 2}.
Alternatively, see
  \cite [Thm.\ III]{HKM},  \cite{Kato1975}, or 
\cite[App.\ III, Thm.\ 3.7 and 4.2]{ChBook}.

Let $K_n\subset ( {{N}} ^-)^{\mathrm{int}}$, $n=1,2,\dots$ be compact sets that care closures of open sets such that 
$K_n\subset K_{n+1}$ and $\bigcup_{n=1}^\infty K_n=
 ( {{N}} ^-)^{\mathrm{int}}$. For example, we can choose 
 \beq\label{Kn sets}
 K_n=J^+((-\pi+\frac 1n,\SP))\cap J^-(-\frac 1n,\SP)).
 \eeq

Let $ \mathcal V_n\subset H^k_0( 
K_n)$ be a sufficiently small neighborhood of the zero function.
Then the source-to-solution map 
\beq\label{L maps defined}
L_{g_{\ext},B,A,p_+,K_n}: \mathcal V_n\subset H^k_0(K_n) \to
L^2(I^-_\Next(p_+)\cap {{N}} ^+)
\eeq
is well defined by setting $L_{g_{\ext},B,A,p_+,K_n}(f) = u|_{I^-_\Next(p_+)\cap {{N}} ^+}$, where $u$ solves \eqref{eq:nonlinear wave equation in Penrose 2}.

\commented{
\begin{remark}
Below, we consider also the extended source-to-solution operator $L_{g_{\ext,2},p_+}$
that is defined as the union of  the domains $ \mathcal V_n$, $n=1,2,\dots$,
\beq
 \mathcal V=\bigcup_{n=1}^\infty  \mathcal V_n \subset H^k_0({{N}} ^-) .
\eeq
For $f\in  \mathcal V$ we define
\beq\label{def: extended source-to-solution operator}
L_{g_{\ext},B,A,p_+}(f)=L_{g_{\ext},B,A,p_+,K_n}(f),\quad \hbox{when }f\in  \mathcal V_n.
\eeq

\end{remark}
}

The following theorem shows that the scattering functionals determine the source-to-solution maps.

\begin{theorem}\label{t:scattering_operator_determines V}
Let $(M_1,g_{M_1})$ and  $(M_2,g_{M_2})$ be globally
hyperbolic Lorentzian manifolds 
 with an asymptotically Minkowskian infinity that are visible in the whole space-time. Let
  $(N_1,g_{N_1})$ and $(N_2,g_{N_2})$ be  conformal manifolds given in Definition \ref{def:asymptotically nice infinity A} and 
$(N_{\ext}^1,g_{1})$ and $(N_{\ext}^2,g_{2})$ be the corresponding extended manifolds.
Let $a_j,d_j\in C^\infty(M_j)$, $j=1,2$ be Schwartz
functions in an asymptotically Minkowskian infinity of $M_j$, 
$a_j(x)\not=0$ for all $x\in M_j$, and ${\kappa}\ge 4$.

Let $S_{\M_j,{\gM}_j,a_j,t_1,{q}}$ be the scattering functionals related to $(M_j,g_{M_j})$ and coefficients $a_j$.
If the  scattering  functions on 
 $(M_j,g_{M_j})$ satisfy
$$
S_{\M_1,{\gM}_1,a_1,{\mntext d_1};t_1,{q}}(h)=S_{\M_2,{\gM}_2,a_2,{\mntext d_2};t_1,{q}}(h),
$$
for all $-\pi<t_1<0$, $q\in \nullinf^+$ 
and $ h\in \mathcal D(S_{\M_1,{\gM}_1,a_1,t_1,{q}})\cap \mathcal D(S_{\M_2,{\gM}_2,a_2,t_1,{q}})$,
then the corresponding source-to-solution maps
on  $(N^j_{ext},g_{j})$ {\mnewtext  (see formula \eqref{L maps defined})}   satisfy for all $p_+\in \hat \mu(0,\pi)$ and $n$
$$
L_{g_{1},B_1,A_1,p_+,K_n}(f) = L_{g_2,B_2,A_2,p_+,K_n}(f)
$$
 when $f$ is
in some neighborhood $\mathcal U_n$ of the zero function in $H^k_0(K_n)$ and  {\mntext $B_j=B_{g_j}+D_j$
and  $B_{g_j}$ and $D_j$ are defined in \eqref{ABD functions}.}
\end{theorem}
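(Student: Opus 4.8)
The plan is to show that the scattering functionals on the physical manifolds $(M_j,g_{M_j})$ encode exactly the same information as the near-field source-to-solution maps on the extended manifolds $(N^j_{\ext},g_j)$, by tracing the chain of conformal identifications already set up in the excerpt. The central conceptual point, illustrated in Figure~\ref{fig: sources} (Middle), is that a wave produced by a past radiation field $h_-\in \mathcal B_-(R(t_1))$ is the \emph{same} object, after the Penrose conformal change $\tilde u=\omega_M^{-1}u\circ\Phi$, as the solution of the Goursat problem \eqref{general non-linear problem1}--\eqref{general non-linear problem3} on $N\cap I^-_{\Next}(q_0)$. So I would first recall that, by the conformal transformation of the conformal wave operator recorded in \eqref{ABD functions}, solving the scattering problem \eqref{general non-linear problem on M}--\eqref{general non-linear problem on M3} on $(M(q_0),g_M)$ with past radiation field $h_-$ is equivalent to solving the Goursat-Cauchy problem for $(\square_{g_N}+B_{g_N}+D)\tilde u+A\tilde u^\kappa=0$ on $N$, and that the value $S_{t_1,q}(h_-)=h_+(q)$ is read off from $\tilde u|_{I^-_{\nullinf^+}(q_0)}$.

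Next I would reduce the Goursat data on $\nullinf^-$ to interior sources in the non-physical past $(N^-)^{\mathrm{int}}$. The key observation, stated in the excerpt's remark, is that the elements $h_-\in\mathcal B_-(R(t_1))$ arise as the restrictions to $\nullinf^-$ of solutions of the linear equation \eqref{wave on sphere2 A1}--\eqref{wave on sphere2 A2} with compactly supported Cauchy data on $P(R(t_1))\subset\{T=0\}\setminus\{i_0\}$. Running such a wave \emph{backward} from $\{T=0\}$ into $N^-$, one can realize the same Goursat data $h_-$ on $\nullinf^-$ as being produced by a compactly supported source $f\in H^k_0(K_n)$ placed in the interior of $N^-$: concretely, one solves the linear wave equation in $N^-\cup\nullinf^-$ and chooses the source $f$ so that the resulting solution has the prescribed trace $h_-$ on $\nullinf^-$ while vanishing to the past of $K_n$. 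Because $\Next$ is globally hyperbolic (Lemma~\ref{lem: Next is globally hyperbolic}) and the relevant waves avoid $i_0$ by the choice of $R(t_1)$, the causality-violation-at-infinity phenomenon does not occur and the source-to-trace correspondence $f\mapsto h_-$ is well-defined and, on the relevant small neighborhoods $\mathcal V_n$, invertible onto a neighborhood of $0$ in $H^k(\nullinf^-)\cap\mathcal B_-(R(t_1))$. Then the solution $w$ of \eqref{eq:nonlinear wave equation in Penrose 2} with this $f$ agrees on $N\cap I^-_{\Next}(q_0)$ with the Goursat solution $\tilde u$, since both solve the same nonlinear equation on $N$, vanish to the past of $\supp(f)$, and carry the same trace on $\nullinf^-$; uniqueness from Lemma~\ref{Lemma L-maps} forces them to coincide.

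With this dictionary in place, the hypothesis $S_{\M_1,\gM_1,a_1,d_1;t_1,q}(h)=S_{\M_2,\gM_2,a_2,d_2;t_1,q}(h)$ for all $-\pi<t_1<0$ and all $q\in\nullinf^+$ translates into equality of the future radiation-field values $\tilde u_1|_{I^-_{\nullinf^+}(q_0)}$ and $\tilde u_2|_{I^-_{\nullinf^+}(q_0)}$ whenever the two problems share admissible Goursat data. I would then argue that the value of $L_{g_j,B_j,A_j,p_+,K_n}(f)=w_j|_{I^-_{\Next}(p_+)\cap N^+}$, which is the trace of $w_j$ on the non-physical future $N^+$, is determined by the corresponding future radiation fields on $\nullinf^+$: indeed, $w_j$ restricted to $N^+$ solves the \emph{linear} equation $(\square_{g_j}+B_j)w_j+A_jw_j^\kappa=0$ there (with $w_j$ small, so the map is locally bijective in the relevant norms), with boundary data on $\nullinf^+$ given precisely by the future radiation field, and with $w_j=0$ to the past of $\supp(f)$; so by finite speed of propagation and uniqueness for the linear Goursat problem on $N^+$, the restriction $w_j|_{N^+}$ is determined by $\{h_+(q):q\in\nullinf^+\cap I^-_{\Next}(p_+)\}$. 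Since these future values coincide for $j=1,2$ by hypothesis, and since the common source $f$ produces the common Goursat data on $\nullinf^-$, we conclude $L_{g_1,B_1,A_1,p_+,K_n}(f)=L_{g_2,B_2,A_2,p_+,K_n}(f)$ on a neighborhood $\mathcal U_n$ of $0$, for every $p_+\in\hat\mu(0,\pi)$ and every $n$.

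The main obstacle, in my estimation, is making the source-to-radiation-field correspondence rigorous and compatible between the two directions of propagation. One must verify that a \emph{single} compactly supported source $f\in H^k_0(K_n)$ in $(N^-)^{\mathrm{int}}$ generates the prescribed Goursat trace $h_-$ on $\nullinf^-$ for \emph{both} metrics $g_1,g_2$ simultaneously, which works only because $g_1=g_2=g_{\R\times\Sp^3}$ on $N^-$ (the non-physical extension carries the fixed product metric), so the linear forward solve producing $h_-$ from $f$ is metric-independent there. Equally delicate is controlling the traces near the singular points $i_0,i_-,i_+$, where one must use that $h_-\in\mathcal B_-(R(t_1))$ keeps all wavefronts away from $i_0$ (the $R(t_1)<\infty$ statement and the existence of the neighborhoods $U_0,U_-$ with $S(R(t_1))\cap(U_0\cup U_-)=\emptyset$), so that all the restriction and microlocal-trace arguments \`a la \cite{Duistermaat} are valid and the solution operators of Lemma~\ref{Lemma L-maps} apply on the relevant globally hyperbolic sub-domains.
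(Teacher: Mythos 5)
Your proposal is correct and follows essentially the same route as the paper's own proof: a source $f$ supported in $K_n\subset(N^-)^{\mathrm{int}}$ propagates by the \emph{linear} equation in the fixed product geometry of $N^-$ (so the trace on $\nullinf^-$ lies in $\mathcal B_-(R(t_1))$ and is identical for both manifolds), the hypothesis on the scattering functionals then forces the future radiation fields on $\nullinf^+\cap J^-(p_+)$ to agree, and uniqueness of the linear Goursat problem on $N^+\cap J^-(p_+)$ (H\"ormander--Nicolas, as in \eqref{eq: initial with future Goursat}) transfers this equality to the observations in $N^+$, i.e., to the source-to-solution maps. The only cosmetic difference is that you cite Lemma~\ref{Lemma L-maps} where the matching of the sourced solution with the Goursat solution really rests on the Goursat uniqueness statement (Proposition~\ref{prop:energy_inequality uniqueness}, used in Theorem~\ref{thm:nonlinear_goursat copy}(iii)), but the argument is otherwise the paper's.
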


\subsection{Structure of the proofs of the main theorems}

{\mnewtext
To show that the scattering functionals
are well defined in an asymptotically Minkowskian space-time, we reduce the
the scattering problem to a nonlinear Cauchy-Goursat problem in a suitable subset 
$W$ (see \eqref{eq:domain_for_wave_eq} and Fig.\ \ref{fig:penrose}) of the extended space-time $N_{\ext}$.
The boundary of $W$ is only Lipschitz-smooth near the space-like infinity $i_0$ and in 
Appendix A we obtain regularity estimates in the the higher order Sobolev spaces  for 
 the  Cauchy-Goursat problem in the space-time $W$. Using these estimates  
and the strong Huygens' principle on $\R\times \mathbb S^3$, we show that the
scattering functionals are well defined, and prove
Theorem \ref{t:scattering_operator_determines V}. This result implies that one can reduce the inverse scattering problem to an inverse problem for local
measurements in the extended space-time. In this problem with local measurements, a source-to-solution
operator is given in the case when the source and the observation sets are different and separated.
Earlier, the  source-to-solution map with different source and observation sets
has been studied in \cite{FLO} in the situation when 
\beq\label{causally connected2}
M^{\i,\out}:= \bigg(\bigcap_{p\in \Omega_{\i}}I^+(p)\bigg) \cap  \bigg(\bigcap_{q\in\Omega_{\out}} I^-(q) \bigg)=\emptyset.
\eeq

In the inverse problem on the extended Penrose {\mnewtext compactification} $\Next$ 
the sources are supported in the set $\tilde \Omega_{\i}=({{N}} ^-)^{\mathrm{int}}$ 
and 
solutions are observed in the set $\tilde \Omega_{\out}=({{N}} ^+)^{\mathrm{int}}$. On the space-time $\Next$
there holds for the space-like infinity
$i_0$ that (see Figure \ref{fig: sources}, Middle)
\beq\label{causally disconnected}
 i_0\in 
 N_{\ext}^{\i,\out}:= \bigg(\bigcap_{p\in \tilde \Omega_{\i}}I^+(p)\bigg) \cap \bigg(\bigcap_{q\in\tilde \Omega_{\out}} I^-(q)\bigg)
\eeq
that is, 
the sets $\tilde \Omega_{\i}=({{N}} ^-)^{\mathrm{int}}$ and  $\tilde \Omega_{\out}=({{N}} ^+)^{\mathrm{int}}$ are causally separated by $i_0$. This implies that the Lorentzian time separation function $\tau:\Next\times \Next\to \R$
satisfies $\tau(p,q)>0$ for all $p\in \tilde \Omega_{\i}$ and $q\in \tilde \Omega_{\out}$ and $\tilde \Omega_{\i}$ and thus $\tilde \Omega_{\out}$ can not be connected by light-like geodesics having no conjugate or cut points. This prevents  to use of
layer-wise constructions (e.g., the layer-stripping) to solve the inverse problem.
We deal with this difficulty by first constructing open subsets of space-time that are not connected  to the source and observation
sets, but which can be connected to the source set with light-like geodesics that have no cut or conjugate points, see  Figure \ref{fig: sources} (Middle and Right).
Second, we prove that the source-to-solution map in the original source and observation domains 
$N^-$ and $N^+$ and the open sets reconstructed above determine two new local
 source-to-solution maps in the neighborhoods of the sets $ \nullinf^+\cup i_0$ and  $\nullinf^-\cup i_0$. That is, we show that by using the source-to-solution map in the non-physical part of the extended space-time $N_{\ext}$, one can 
 reproduce the results of the measurements where both the sources and the observations
would be located the physical part $N$ of the space-time.
Theorem \ref{main thm for general manifold} is then proven
by combining the original and the reproduced measurements in the extended space-time and using the multiple linearization results for the non-linear wave equation. 
}

{\newnewtext \subsection{Generalizations of the inverse scattering problem for FLRW space-times} 
\label{sec: generalizations}

Next we will consider  space-times 
similar to the {\it Fried\-mann-Lema\^itre-Robertson-Walker (FLRW)}
space-times studied in cosmology. 
It is possible to apply our constructions to perturbations of such space-times, given in \eqref{background_metric} below, when the 
condition \eqref{no particle horizons} holds. 
We note that such cosmological backgrounds are conformally related to the whole Minkowski space-time and thus no particle horizons appear, see 
 \cite[p.\ 105 and Fig. 5.6]{Wa1984} -- this has sometimes been suggested as a solution to the particle horizon problem in big-bang cosmology,
 see  \cite[Sec.\ 4.2-4.3]{Inflation-course-Kinney}. 

An appealing feature of this construction  is that then the sources that generate waves could lie near  the cosmological singularity (i.e., the big-bang) in the $M_{FLRW}$ space-time.

While the conformal structure of the space-times we consider here  is the same as the Minkowski space, it should be said that 
our notion of radiation field is an extrapolation.

To define the scattering problem in FLRW space-times, we consider a  class of manifolds with a more general conformal factor close to the infinities. Let $V\subset\R^{1+3}$ and $g_V$ be a   Lorentzian metric  on $V$.
 We say that $(V,g_V)$ is a neighborhood of  
the 
 light-like infinity in $\R^{1+3}$ with a conformally asymptotically Minkowskian metric with conformal factor ${\tilde \Omega}\in C^\infty(\R^{1+3})$
 if the conditions in {Definition} \ref{Def: visible infinity} are valid when $\Omega$ is replaced by ${\tilde \Omega}|_V$. 
  Moreover,  we say that a manifold $(\M,\gM)$ has a conformally asymptotically Minkowskian infinity $E$ 
  (with a conformal factor ${\tilde \Omega}$) that is visible in the whole space-time $\M$ if
 \begin{itemize}
  \item[(i)] $(\M,\gM)$ is a globally hyperbolic manifold and it has a subset 
 $E\subset \M$ such that $(E,\gM|_E)$
 is isometric to 
  a neighborhood $(V,g_V)$ of  
the 
 light-like infinity in $\R^{1+3}$ with  a conformally asymptotically Minkowskian metric with conformal factor ${\tilde \Omega}$,

  \item[(ii)]  $J^+_\M(E)=J^-_\M(E)=\M$.

 \end{itemize}
 Our results for the inverse scattering problem generalize  for the manifolds  with a conformally asymptotically Minkowskian infinity $E$ 
 with a priori given conformal factor  ${\tilde \Omega}$.  
  In such manifold, let us consider the equation for the conformal wave operator  
  \begin{equation}
      \label{modified equation with Q}
  \square_gu  - \frac 16 R_{g}u+d\hspace{.2mm}u+au^\kappa=0,\quad\hbox{on }M,
  \end{equation}
  where $d=d(x)$ and $a=a(x)$ are scalar functions.
 Using a conformal transformation, we see that the equation \eqref{modified equation with Q}  is  equivalent to the wave equation 
  \begin{equation}\label{conformally Minkowskian wave equation}
   \square_{g_0}u_0 - \frac 16 R_{g_0}u+d_0u_0+a_0 u_0^\kappa=0,\quad\hbox{on }M,
   \end{equation}
 where  $u_0=(\Omega/{\tilde \Omega})^{-1}u$ and 
\begin{equation}
\begin{split}&\qquad g_0=(\Omega/{\tilde \Omega})^2g,\quad  d_0=(\Omega/{\tilde \Omega})^{-2}d,
\quad a_0=a\cdot (\Omega/{\tilde \Omega})^{{\kappa}-3} .
\end{split}\label{Q0 and a0} 
    \end{equation}
    Here, $(M,g_0)$ is a Lorentzian manifold that has an asymptotically Minkowskian infinity $E$ 
  that is visible in the whole space-time $\M$.
Below, we assume that $d_0(\Phi(x))$ and $a_0(\Phi(x))$ are Schwartz functions (that is, those vanish up to infinite order near 
${\mathcal I}^-\cup {\mathcal I}^+\cup i_0$).

For the equation \eqref{conformally Minkowskian wave equation}, we define the scattering functionals
as in the
Definition} \ref {def: Scattering functionals},
where the equation 
\eqref{general non-linear problem on M} is replaced by the equation \eqref{conformally Minkowskian wave equation} having the potential term $d_0$.
That is,
$\tilde S_{\M^{(1)},\gM^{(1)},a^{(1)},d^{(1)};t_1,{q}}:h_-\to h_+(q)$ is as in the
Definition} \ref {def: Scattering functionals},
when the equation 
\eqref{general non-linear problem on M} is replaced by the equation \eqref{conformally Minkowskian wave equation}.
That is, $\tilde S_{\M^{(1)},\gM^{(1)},a^{(1)},d^{(1)};t_1,{q}}(h_-)=h_+(q)$, where $h_-$ and $h_+$ 
are the generalized past and future radiation fields, respectively, that are given by 
\beq\label{future rad field formula gen}
& &\lim_{x\to p} {\tilde \Omega}(\Phi^{-1}(x))^{-1} u(\Phi^{-1}(x))= h_-(p)\quad \hbox{for }p\in  {\mathcal I}^-,\\
& &\lim_{x\to q} {\tilde \Omega}(\Phi^{-1}(x))^{-1} u(\Phi^{-1}(x))= h_+(q)\quad \hbox{for }q\in  {\mathcal I}^+,
\eeq
where $x\in \Phi(V)\cup ({\mathcal I}^-\cup {\mathcal I}^+)$, c.f.\ \eqref{future rad field formula} and $u$
satisfies \eqref{modified equation with Q}.
 In this setting Theorem~\ref{main thm for general manifold} generalizes to the following result where 
 the conformal factor is ${\tilde \Omega}$ and we are given only a restricted  scattering data.
 
\begin{theorem}\label{main thm for general manifold generalized}
Let $(\M^{(j)},g^{(j)})$, $j=1,2$ be two globally hyperbolic manifolds with conformally asymptotically Minkowskian infinities with
conformal factor ${\tilde \Omega}$, that are visible in the whole space-time $\M^{(j)}$.
Let $a^{(j)},d^{(j)}\in C^\infty(\M^{(j)})$, $j=1,2$ be such that the corresponding functions
$a_0^{(j)}(\Phi(x))$ and $d_0^{(j)}(\Phi(x))$ defined by formula \eqref{Q0 and a0}  are Schwartz functions
and $a_0^{(j)}$
are strictly positive.
Moreover, let $-\pi\le t^*_1<0$.
 Assume that   for all $t^*_1<t_1<0$ and $q\in \nullinf^+$  the scattering functionals for
 the equations \eqref{modified equation with Q} satisfy
$$
\tilde S_{\M^{(1)},\gM^{(1)},a^{(1)},d^{(1)};t_1,{q}}(h)
=\tilde S_{\M^{(2)},\gM^{(2)},a^{(2)},d^{(2)};t_1,{q}}(h)
$$
when  $ h\in \mathcal D(\tilde S_{\M^{(1)},\gM^{(1)},a^{(1)},d^{(1)};t_1,{q}})\cap \mathcal D(\tilde S_{\M^{(2)},\gM^{(2)},a^{(2)},d^{(2)};t_1,{q}})$.
Then there is a diffeomorphism $\Psi:I_{\M^{(1)}}^+(\nullinf^-\cap \mathcal B_-(R(t_1^*)))\to I_{\M^{(2)}}^+(\nullinf^-\cap \mathcal B_-(R(t_1^*)))$ and 
a function $\gamma\in C^\infty(\M^{(1)})$ such that the metric tensors $g^{(j)}$
and the coefficients $a^{(j)}$ of the non-linear terms 
satisfy  
\begin{equation}\label{eqtrans}
\begin{split}
& g^{(1)} = e^{2\gamma}\Psi^*g^{(2)},\\
& a^{(1)}=  e^{(\kappa-3)\gamma(x)}\Psi^*a^{(2)},
\end{split}
\end{equation}
in the domain $I_{\M^{(1)}}^+(\nullinf^-\cap \mathcal B_-(R(t_1^*)))$,
that is, the non-linear scattering functionals uniquely determine the topology, the differentiable structure, and the conformal type of the Lorentzian manifold,
and the Lorentzian metric, and the coefficient function of the non-linear term up to the transformations in \eqref{eqtrans}. 
\end{theorem}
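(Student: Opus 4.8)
The plan is to deduce Theorem~\ref{main thm for general manifold generalized} from Theorem~\ref{main thm for general manifold} through the conformal change of unknown and metric recorded in \eqref{modified equation with Q}--\eqref{Q0 and a0}, and then to transport the resulting rigidity back through that change. For each $j=1,2$ I would first replace the conformal wave equation \eqref{modified equation with Q} on $(\M^{(j)},g^{(j)})$ by the equation \eqref{conformally Minkowskian wave equation} on $(\M^{(j)},g_0^{(j)})$, where $g_0^{(j)}=(\Omega/\tilde\Omega)^2 g^{(j)}$, $u_0=(\Omega/\tilde\Omega)^{-1}u$, and the transformed coefficients $a_0^{(j)},d_0^{(j)}$ are given by \eqref{Q0 and a0}. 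By hypothesis $a_0^{(j)}\circ\Phi$ and $d_0^{(j)}\circ\Phi$ are Schwartz and $a_0^{(j)}>0$, and $(\M^{(j)},g_0^{(j)})$ carries a genuine asymptotically Minkowskian infinity in the sense of Definition~\ref{Def: visible infinity}; moreover \eqref{conformally Minkowskian wave equation} is exactly of the form treated by the standard machinery, since its conformal coupling $-\frac16 R_{g_0}u_0$ is turned into a curvature potential of the form \eqref{ABD functions} upon passing to the Penrose compactification. Thus $(g_0^{(j)},a_0^{(j)},d_0^{(j)})$ satisfies the hypotheses of Theorem~\ref{main thm for general manifold}.

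Second, I would relate the two notions of data. Writing $c(p)=\lim_{x\to p}(\tilde\Omega/\Omega)^2(\Phi^{-1}(x))$ for the boundary value of the conformal factor on $\nullinf^-\cup\nullinf^+$, the identity $u=(\Omega/\tilde\Omega)u_0$ together with \eqref{future rad field formula gen} shows that the generalized radiation fields of $u$ and the standard radiation fields of $u_0$ are related by $h_\pm^{(0)}=c\,h_\pm$. Since $\tilde\Omega$ is a fixed datum and $\Omega$ is the universal Penrose factor, $c$ is one and the same smooth strictly positive function for both manifolds. To avoid the subtlety that $c\,h$ need not remain in the class $\mathcal B_-(R(t_1))$ of concentrated waves, I would push this correspondence through the near-field reduction rather than comparing the functionals directly: following the proof of Theorem~\ref{t:scattering_operator_determines V}, adapted so that the physical trace is normalized by $\tilde\Omega$ rather than $\Omega$ (the two differing by the benign factor $c$), the generalized scattering functionals $\tilde S^{(j)}_{t_1,q}$ determine the source-to-solution maps $L_{g_j,B_j,A_j,p_+,K_n}$ on the extended manifolds $N^j_{\ext}$, with sources in $(N^-)^{\mathrm{int}}$ and observations in $(N^+)^{\mathrm{int}}$. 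Equality of the generalized functionals then yields $L^{(1)}=L^{(2)}$ for all admissible $p_+$ and $n$.

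Third, I would invoke Theorem~\ref{main thm for general manifold} (equivalently, the reconstruction built on these source-to-solution maps) for the transformed, genuinely asymptotically Minkowskian data, obtaining a diffeomorphism $\Psi$ and $\gamma_0\in C^\infty$ with $g_0^{(1)}=e^{2\gamma_0}\Psi^*g_0^{(2)}$ and $a_0^{(1)}=e^{(\kappa-3)\gamma_0}\Psi^*a_0^{(2)}$. Transporting back through $g^{(j)}=(\tilde\Omega/\Omega)^2 g_0^{(j)}$ and $a^{(j)}=(\tilde\Omega/\Omega)^{\kappa-3}a_0^{(j)}$ gives \eqref{eqtrans} for the original pair $(g^{(j)},a^{(j)})$ with the single conformal factor $\gamma=\gamma_0+\log(\tilde\Omega/\Omega)-\log((\tilde\Omega/\Omega)\circ\Psi)$, which is smooth because $\tilde\Omega,\Omega$ are smooth and positive; one checks that the same $\gamma$ simultaneously intertwines the metrics and the nonlinear coefficients. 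The restricted data $t_1^*<t_1<0$ enters only in delimiting where the reconstruction operates: admissible incoming concentrated waves are supported in $S_-(R(t_1))$, and the higher-order-linearization step determines the geometry at a point only when it is reached by transversally intersecting null geodesics issuing from the accessible part of $\nullinf^-$ free of conjugate and cut points, so that running the construction over all $t_1\in(t_1^*,0)$ recovers $\Psi$ and $\gamma$ exactly on $I_{\M^{(1)}}^+(\nullinf^-\cap\mathcal B_-(R(t_1^*)))$.

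I expect the main obstacle to lie not in the concluding reconstruction but in the first two steps: establishing that $\tilde\Omega/\Omega$ extends to a smooth strictly positive function across $\nullinf^+\cup\nullinf^-\cup\{i_0\}$, so that $\Omega^2 g_0^{(j)}$ is a smooth Lorentzian metric up to the conformal boundary and the multiplier $c$ is smooth and non-vanishing there; and carrying the data equivalence through the near-field reduction in a manner insensitive to the precise class $\mathcal B_-(R(t_1))$ on which $\tilde S$ is defined. Once these are secured, the reconstruction and the transfer of \eqref{eqtrans} back to the physical metric proceed as in the unrestricted case, and the restricted-data domain $I_{\M^{(1)}}^+(\nullinf^-\cap\mathcal B_-(R(t_1^*)))$ emerges from the same no-conjugate-point bookkeeping.
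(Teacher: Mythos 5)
Your proposal is correct and follows essentially the same route as the paper: the paper's own (very brief) proof likewise applies the conformal transformation \eqref{Q0 and a0} to pass to the asymptotically Minkowskian space-time $(M,g_0)$ with the equation \eqref{conformally Minkowskian wave equation}, and then reruns the proof of Theorem \ref{main thm for general manifold} on the restricted data, using only those points $p_-$ for which $\mathcal W_j^-=I^-_{N_{\ext}^j}(p_{0}^+)\cap I^+_{N_{\ext}^j}(p_-)\subset S(R(t_1^*))$. Your extra bookkeeping (the boundary factor relating the two radiation-field normalizations, and the explicit back-transported conformal factor $\gamma=\gamma_0+\log(\tilde\Omega/\Omega)-\log((\tilde\Omega/\Omega)\circ\Psi)$) fills in details the paper leaves implicit and is consistent with its argument.
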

To apply the above theorem we will recall the definition of the 
Friedmann-Lema\^itre-Robertson-Walker (FLRW) space-times.
These space-times are  Lorenzian manifolds 
$( M_\mathrm{FLRW},g_\sigma)$ of the following warped product form
$M_\mathrm{FLRW}=(0, \infty) \times \R^3$ with the metric
\begin{align}
\label{background_metric} 
g_\sigma(t,x) = -dt^2 + \sigma^2(t)((dx^1)^2+(dx^2)^2+(dx^3)^2), 
\end{align}
where  $(t,x^1,x^2,x^3) \in 
 (0, \infty) \times \R^3,$ $\sigma(t) > 0$ is smooth on $(0,\infty)$ and tends to zero as $t \to 0$,
 possibly causing a singularity near $t=0$.
A singularity at the boundary $t = 0$ 
corresponds physically to the Big Bang and the behaviour of the metric at large $t$ corresponds to the expansion of the Universe, see \cite{Wa1984,SW}. 
In particular, the choice $\sigma(t) = t^{2/3}$ gives the Einstein-de~Sitter cosmological model \cite[p. 31]{SW}. 
Let us consider next
a model where  
\begin{align}\label{no particle horizons}
\hbox{$\int_1^\infty \frac 1{\sigma(t)}dt=\infty$ \quad and \quad $\sigma(t)<c_1t$ for $0<t<1$,}
\end{align} 
where $c_1>0$.
 In this case
 there are no particle horizons, that is, all observes 
on the co-moving curves $x=\hbox{constant}$ can see at a given time all other co-moving observes, see
 \cite[p.\ 105 and Fig. 5.6]{Wa1984}. The condition \eqref{no particle horizons} is closely
 related to cosmological models with inflation, see \cite[Sec.\ 4.3]{Inflation-course-Kinney}.
 
 We note that the Einstein tensor of several FLRW metrics satisfy the null energy condition (that corresponds to non-negative energy density in gereral relativity), see \cite{CW}, and thus by using perturbations of the FLRW space-times we can consider inverse problems for manifolds satisfying  the null energy condition. 
\medskip

\noindent
 {\bf Example 5.}
Let
$( M_\mathrm{FLRW},g_\sigma)$ be a space time the form (\ref{background_metric})
that satisfies the condition \eqref{no particle horizons}.
Under the assumptions, $( M_\mathrm{FLRW},g_\sigma)$ 
is conformal to the Minkowski space in suitable coordinates. 
Indeed, let us define the conformal time, that is the strictly increasing function 
\begin{align}
\label{conformal_time}
\tau(t) := \int_1^t \frac 1{\sigma(t')} dt',\quad t\in (0,\infty).
\end{align}
Observe that $\tau(t)\to -\infty$ as $t\to 0.$
Let 
\begin{equation}\label{FLRW transformation}
    F:\R_+\times \R^3\to \R\times \R^3, \quad F(t,x)=(\tau(t),x)
\end{equation}
Then, $F$ is an isometric diffeomorphism  from the space $( M_\mathrm{FLRW},g_\sigma)$ 
 to $(\R \times \R^3,g_{\tilde \sigma})$, 
\begin{align}\label{conformal Minkowski space A}
g_{\tilde \sigma}(\tau, x) = \tilde\sigma^2(\tau) (-d\tau^2 + (dx^1)^2+(dx^2)^2+(dx^3)^2),
\end{align}
where $(\tau,x^1,x^2,x^3) \in \R \times \R^3$
and  $\tilde\sigma(\tau(t))=\sigma(t)$. 

Next, let us consider the space-time $\R_+\times \R^3$ with a metric $g$, 
that is a perturbation of a FLRW space-time $(\R_+\times \R^3,g_\sigma)$ in
\eqref{background_metric}. We assume that $\sigma(t)$ is such that there are no particle horizons, that is, the conditions given below in \eqref{no particle horizons}
  are valid. Let $F:\R_+\times \R^3\to \R\times \R^3
$ be the map given  in \eqref{conformal_time} and  \eqref{FLRW transformation}
and assume that $F_*(g-g_\sigma)$ is a Schwartz class function in $\R\times \R^3$.
Let $\tilde g=F_*g$. Then,  $(\R\times \R^3,\tilde g)$ is
a globally hyperbolic manifold with a conformally asymptotically Minkowskian infinity with
conformal factor ${\tilde \Omega}=\tilde\sigma(\tau)\Omega$, that is visible in the whole space-time,
and Theorem~\ref{main thm for general manifold generalized} can be used to reconstruct
the metric $g$ in the future of the domain $\nullinf^-\cap \mathcal B_-(R(t_1^*)))$, where
the ingoing radiation fields are supported.
Here, the radiation fields $h_+$ and $h_-$ are given by
\beq\label{future rad field formula gen 2}
& &\lim_{x\to p} {\tilde \Omega}(F^{-1}(\Phi^{-1}(x)))^{-1} u(F^{-1}(\Phi^{-1}(x)))= h_\pm (p)\quad \hbox{for }p\in  {\mathcal I}^\pm ,
\eeq
where $u$ is the solution of the equation
\eqref{modified equation with Q} on 
$(\R_+\times \R^3,g).$

{\bf Acknowledgements.}
{H.~I. was supported by 
     Grant-in-Aid for Scientific Research (C) 24K06768 Japan Society for the Promotion of Science.}
  M.~L. was partially supported by a AdG project 101097198 of the European Research Council, Centre of Excellence of Research Council of Finland and the FAME flagship of the Research Council of Finland (grant 359186).
   S.~A. was supported by the Natural Sciences and Engineering Research Council of Canada (NSERC): RGPIN-2020-01113 and RGPIN-2019-06946.
 T.~T was supported by the FAME flagship and the Emil Aaltonen foundation.
 Views and opinions expressed are those of the authors only and do not necessarily reflect those of the funding agencies or the  EU.

\section{Geometric properties and notations}

\subsubsection{Preliminary notations on Lorentzian manifolds}\label{sec: Basic notations}
We recall some notations and definitions for Lorentzian manifolds. A more extensive treatment of Lorentzian geometry can be found in \cite{ONeill}. 
Let $(\M,\gM)$ be an $(n+1)$-dimensional Lorentzian manifold with metric of signature $(-,+,+,\dots,+)$ where $n=3$.
We assume that $\M$ is time-oriented.
Then a smooth path $\mu:(a,b)\to \M$ is  time-like if $\gM(\dot\mu(s),\dot\mu(s))<0$ for all $s \in (a,b)$.
The path $\mu$ is causal, if $\gM(\dot\mu(s),\dot\mu(s))\leq 0$ and $\dot\mu(s)\neq 0$ for all $s\in (a,b)$.
For $p,q\in \M$ we denote $p\ll q$ if $p\neq q$ and there is a future-pointing time-like path from $p$ to $q$.
Similarly, $p<q$ if $p\neq q$ and there is a future-pointing causal path from $p$ to $q$, and $p\leq q$ when $p=q$ or $p<q$.
The chronological future of $p\in \M$ is the set $\I^+(p)=\{q\in \M\mid p\ll q\}$ and the causal  future of $p$ is $J^+(p)=\{ q\in \M\mid p\leq q\}$.
The chronological past $\I^-(q)$ and causal past $J^-(q)$ of $q\in \M$ are defined similarly.
If $A\subset \M$, then we denote $J^\pm(A)=\cup_{p\in A}J^\pm(p)$.
The diamond sets are denoted by $J(p,q)= J^+(p)\cap J^-(q)$ and $\I(p,q)=\I^+(p)\cap\I^-(q)$.

A time-orientable Lorentzian manifold $(\M,\gM)$ is globally hyperbolic if there are no closed causal paths in $\M$ and for $q_1,q_2\in \M$ with $q_1<q_2$ the diamond $J(q_1,q_2)\subset \M$ is compact \cite{Bernal}.
In a globally hyperbolic manifold the sets $J^\pm(p)$ are closed and $\mathrm{cl}(\I^\pm(p))=J^\pm(p)$.

Let $L_p\M=\{\xi\in T_p\M\setminus\{0\}\mid \gM(\xi,\xi)=0\}$ be the set of light-like vectors in the tangent space $T_p\M$. This set is called the light-cone of $p$.
Let also $L_p^+\M$ and $L_p^-\M$ denote the future and past light-like vectors in $T_p\M$
and  $L_p^{*,+}\M$ and $L_p^{*,-}\M$ denote the future and past light-like covectors in $T^*_p\M$,
and denote $L_p^{*}\M=L_p^{*,+}\M\cup L_p^{*,-}\M$.
Let $\mathcal L^+(q)=\exp_q(\overline {L^+_q\M})$.

We use also some other notations:
For $\xi\in T_x^*\M$ we will denote $(\xi^\sharp)^j = \gM^{jk}\xi_k$ and for $\zeta\in T_x\M$ we let $(\zeta^\flat)_j = g_{jk}\zeta^k$.
Here $g^{jk}$ is the inverse matrix of $g_{jk}$.

The time-separation function $\tau(x,y)$ of $x,y\in \M$, $x<y$ is defined as the supremum of the lengths $L(\alpha) = \int_0^1 \sqrt{-g(\dot\alpha(s),\dot\alpha(s))}\d s$ of the piece-wise smooth causal paths $\alpha :[0,1] \to \M$ from $x$ to $y$.
If $x< y$ is not valid,  we set $\tau(x,y)=0$.
If $(x,\xi)$ is a non-zero vector, the  number $\T(x,\xi)\in (0,\infty]$ will be used to denote the maximum time for which the geodesic $\gamma_{x,\xi}:[0,\T(x,\xi)) \to \M$ is defined.

For a light-like vector $(x,\xi)\in L^+\M$ we define the cut-locus function
\begin{equation}\label{eq:cut-locus function}
\rho(x,\xi) = \sup\{ s\in [0,\T(x,\xi))\mid \tau(x,\gamma_{x,\xi}(s))=0 \}.
\end{equation}

\begin{definition}
The light observation set from the point $q$ in $V\subset \M$ is
$$
\P_V(q) = {\mtext \mathcal L}^+(q)\cap V,
$$
and the earliest light observation set from the point $q$ is
\begin{equation}\label{eq:earliest light observation sets}
\begin{split}
\E_V(q) = \{ x\in \P_V(q) &\mid \text{ there are no } y\in \P_V(q) \text{ and }\\
    &\text{future-pointing time-like path }\alpha:[0,1]\to V\\
    &\text{such that } \alpha(0) = y \text{ and } \alpha(1) = x \}.
\end{split}
\end{equation}
Let $W \subset \M$ be open.
The family of the earliest light observation sets with source points in $W$ is 
\begin{equation}\label{eq:earliest light observations family}
\E_V(W) = \{ \E_V(q)\subset V\mid q\in W\}
\end{equation}
When we want to emphasize the manifold $\M$ on which we consider the
earliest light observation sets, we use notation
$\E_V(W) =\E_{V,\M}(W).$
\end{definition}

The earliest light observation set are discussed in detail in \cite{KLU2018}.
The earliest light observation set $\E_V(q)$ can be written also as
\beq
\E_V(q)=\bigg(\bigcup_{\xi\in L^+_q\M} \gamma_{q,\xi}([0,\rho(x,\xi)])\bigg)\cap V,
\eeq
that is $\E_V(q)$ is the intersection of the observation set $V$ and the union of all future-directed light-like geodesic segments 
$\gamma_{q,\xi}([0,\rho(x,\xi)])$ from $q$ to their first cut or conjugate point $\gamma_{q,\xi}(\rho(x,\xi))$.
In the case when a point source is located in the point $q$ of the space-time,  the earliest light observation set $\E_V(q)$ 
is the set when the light (or other signal) arriving to $V$ are detected first time, i.e. the signal is  detected at $p$ but
not in the chronological past of $p$.

\subsubsection{Properties of the extended manifold $\Next$}

Next we prove Lemma~\ref{lem: Next is globally hyperbolic}.

\begin{proof}  (of Lemma~\ref{lem: Next is globally hyperbolic}). 
{\mnewtext  Let us start with the simple cases when $(M,g_M)$ is either the Minkowski space-time with the standard metric or the space-time
 $\R^{1+3}$ where we have added a Schwartz class perturbation to the metric of the Minkowski space.
In these cases the manifold $(N,g_N)=(M,\omega_M^2g_M)$ is the subset $I^+_{\R\times \mathbb S^3}(i_-)\cap I^-_{\R\times \mathbb S^3}(i_+)\subset \R\times \mathbb S^3$ with a metric tensor that 
can be $C^\infty$-smoothly extended to the space $\R\times \mathbb S^3$ so that the extended metric coincides
with the standard metric of $\R\times \mathbb S^3$ outside the set $N$. In this case use the
whole space $N$ as the neighborhood $E$ of $\nullinf^+\cup \nullinf^-\cup i_0$ and obtain the extended space-time $\Next$
by gluing $N$ with $N^-$ and $N^+$ smoothly using the standard coordinates of $\R\times \mathbb S^3$.
This makes $\Next$ a globally hyperbolic space-time.

Next, we consider the more general space-times $N$ with (possibly) non-trivial topology.}

Let $\Sigma\subset \N$ be a Cauchy surface of $\N$.
We recall that a subset  $\Sigma\subset \N$ of a time-oriented Lorentzian manifold is a Cauchy surface if the intersection of every inextendable timelike curve in $\N$ and the set  $\Sigma$ is one point,  see \cite[Ch.\ 14, Definition 28]{ONeill}. We recall that a 
 timelike curve $\mu:I_0=(a,b)\to \Next$ is future (or past) inextendable if there is an increasing (or decreasing) sequence $s_j,$ $j\in \Z_+$ such that $\mu(s_j)$ does not converge as $j\to \infty$, and that a path is  inextendable if it is both past and future  inextendable.
Observe that the definition of a Cauchy surface does not
require that $\Sigma$ is a smooth submanifold.
By \cite[Ch.\ 14, Corollary 39]{ONeill}, the time-oriented Lorentzian manifold $\N$  is globally hyperbolic if and only if it has a Cauchy surface.
Let us next show that  $ \Sigma_{\ext}=\Sigma\cup i_0$ is a Cauchy surface of $\Next$.

Let $\mu:I_0\to \Next$ be an inextendable causal curve in $\Next$, $I_0\subset \R$.
If $\mu\cap \N \not=\emptyset$, we see that $\mu\cap \N $ is an inextendable causal path in  $\N $ and hence 
$\mu$ intersects $ \Sigma$ once.
Hence, if $\mu\cap \N \not=\emptyset$, then $\mu$ has to intersect $ \Sigma\cup i_0$.

Next, assume that $\mu\cap \N =\emptyset$. To show that 
and $\mu$ contains the point $i_0$, we assume to the contrary that 
$i_0\not \in \mu.$ As the path $\mu$ is a connected
set but $N_{\ext}\setminus (N\cup i_0)={N}^+\cup {{{N}}}^-$
is disconnected, there holds $\mu\subset {N}^+$ or  $\mu\subset {{{N}}}^-$.
Suppose that the former is valid, that is,  $\mu\subset {{{N}}}^+$.
Because $\mu$ is past inextendable, there is a decreasing sequence $r_j\in I_0$, $j=1,2,\dots$
such that $\mu(r_j)$ does not converge as $j\to \infty$.
However, as $\mu$ is causal  $\mu(r_j)\subset J^-_{\Next}(x_1)$ for all $j$,
where $x_1=\mu(r_j)$. As ${N}^+\subset  J^+_{\Next}(i_0)$
and the set $ J^-_{\Next}(x_1)\cap  J^+_{\Next}(i_0)$ is compact,
we see that there is a subsequence $x_k=\mu(r_{j_k})$ such that  $\mu(r_{j_k})$ converges to $p\in 
J^-_{\Next}(x_1)\cap  J^+_{\Next}(i_0)\subset {{{N}}}^+$. 
As ${{{N}}}^+\subset \R\times \Sp^3$ and $x_k\to p$ as $k\to \infty$,
we see that $x_k\in J^+_{\Next}(p)$ and for $j'\ge j_k$ we have 
$$
\mu(r_{j'})\subset 
J^-_{\Next}(x_k)\cap  J^+_{\Next}(x_{k+1})\subset J^-_{\Next}(x_k)\cap  J^+_{\Next}(p).
$$
As the sets $J^-_{\Next}(x_k)\cap  J^+_{\Next}(p)$ converge to the singleton $\{p\}$
as $k\to \infty$ (that is, for any neighborhood $U\subset \R\times \Sp^3$ of $p$ there is $k$ such that
$J^-_{\Next}(x_k)\cap  J^+_{\Next}(p)\subset U$), we see that
the sequence $\mu(r_j)$ converges to the limit point $p\in {{{N}}}^+$ as $j\to \infty$.
As this is in contradiction with the assumption that  $\mu$ is
 an inextendable causal path, we have shown that $\mu$ contains the point $i_0$.
 Thus, we have seen that also when $\mu\cap \N =\emptyset$ then $\mu$ has to intersect $ \Sigma\cup i_0$.
 
Hence, we have shown in all cases that any inextendable causal curve $\mu$ in $\Next$ intersects 
 $ \Sigma\cup i_0$.
 
  Let us next show that $\mu$ can intersect $\Sigma\cup i_0$ only once.
  Using the definition of the infinity $E$ of the manifold $\Next$, the causal
  vectors at the point $i_0$ with respect to
  the metric $g$ coinside with
  the causal vector with respect to the metric $-dT^2+g_{\Sp^3}$.
  Thus all causal paths from $i_0$ enter in the future to $ {{{N}}}^+$ and in the past to $ {{{N}}}^-$.
  This means that a causal path can not enter from $N$ to the point $i_0$.

 Let us first consider the case when $\mu\cap \N\not =\emptyset$. Then the path $\mu$ may exit from $N$ only to
 $\mathcal I^+$ or $\mathcal I^-$ and in particular, it can not intersect $i_0$. Let us consider the case when
 $\mu$ intersects  $\mathcal I^+$ but not  $\mathcal I^-$.
 Then $\mu\subset\N \cup {{{N}}}^+$ as $\mu$ can not be extended in $\Next$ to the past, there is a decreasing
 sequence $r_j\in I_0$ such that $\mu(r_j)\in N$ does not converge in $\Next$ as $j\to \infty$.
Then $\mu(r_j)\in N$ can neither converge in $N$ as $j\to \infty$. These imply that $\mu\cap \N$ is inextendable in $N$.
Moreover, there exists the smallest value $r'\in I_0$
such that $\mu(r')\in {{{N}}}^+$. 
Then there is an increasing
 sequence $r_j'\to r'$ such that $\mu(r_j')\in N$.
 As $\lim_{j\to \infty}\mu(r_j')=\mu(r')\in {{{N}}}^+$,
 we see that $\mu(r_j')$ does not converge in $N$ as $j\to \infty$.
 Thus, $\mu\cap \N$ is inextendable in the future.

Similarly, by considering all cases when  $\mu$ does or does not intersect  $\mathcal I^+$ or  $\mathcal I^-$, we see that in all cases $\mu\cap \N$ is inextendable in $N$.
Hence, $\mu\cap \N$ intersects $\Sigma$ at most once (and do not intersect $i_0$).

 Let us next consider  the case when $\mu\cap \N =\emptyset$.
We observe that a causal curve in   ${{{N}}}^+$ or in   ${{{N}}}^-$ can intersect
$i_0$ only at once. Moreover, when $\mu $ is
 parametrized to the future direction, we see that $\mu$  can enter from 
 ${{{N}}}^-$ to  ${{{N}}}^+$ only through $i_0$,
 and then it stays in $({{{N}}}^-)^{\mathrm{int}}$.
 Thus we see that $\mu$ can intersect $i_0$ only once.
 
Above, we  have seen that  any  inextendable causal curve
 in $\Next$ can intersect  $ \Sigma\cup i_0$ at most once.
  Thus,   $ \Sigma\cup i_0$ is a Cauchy surface of $\Next$ and hence
  $\Next$ is a globally hyperbolic Lorentzian manifold.
\end{proof}

\subsubsection{Notations related to the extended manifold}

Recall that
\beq\label{hat M 2}
\Next=\N \cup  {{{N}}}^+\cup  {{{N}}}^-.
\eeq
 We define
\begin{equation}\label{eq:setD}
\begin{split}
&\bD = \big(\I_{(\Next,g_\ext)}^+(p_-)\cap \I_{(\Next,g_\ext)}^-(p_+)\big)\setminus ({{{N}}}^+\cup {{{N}}}^-),
 \\
&\bD_0 = \I_{(\Next,g_\ext)}^+(p_-)\cap \I_{(\Next,g_\ext)}^-(p_+).   
\end{split}  
  \end{equation}

\begin{figure}[ht!]
\centering
\begin{tikzpicture}
\path[draw=black](0,0) rectangle (2,8);
\draw[draw=red] (0,2) -- (2,4);
\draw[draw=red] (0,6) -- (2,4);
\draw (1,0) node [above] {$\R\times\Sp^3$};
\draw (2,2) node [right] {$\mu$};
\draw (0.5,4) node [right] {$\bD$};
\draw (1,2.5) node [right] {$\Wminus$};
\draw (1,5.5) node [right] {$\Wplus$};

\filldraw[draw=black,fill=black] (0,2) node [left] {$(-\pi,\{\NP\})$} circle (1.5pt);
\filldraw[draw=black,fill=black] (2,4) node [right] {$(0,\{\SP\})$} circle (1.5pt);
\filldraw[draw=black,fill=black] (0,6) node [left] {$(\pi,\{\NP\})$} circle (1.5pt);

\filldraw[draw=black,fill=black] (2,1) node [right] {$p_-$} circle (1.5pt);
\draw[draw=black] (2,1) -- (0,3);

\filldraw[draw=black,fill=black] (2,0.4) node [right] {$p_{-2}$} circle (1.5pt);
\draw[draw=black] (2,0.4) -- (0,2.4);

\filldraw[draw=black,fill=black] (2,7) node [right] {$p_+$} circle (1.5pt);
\draw[draw=black] (2,7) -- (0,5);

\filldraw[draw=black,fill=black] (2,7.6) node [right] {$p_{+2}$} circle (1.5pt);
\draw[draw=black] (2,7.6) -- (0,5.6);

\end{tikzpicture}
\hfill
\includegraphics[height=8cm]{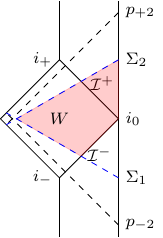}
\caption{{\bf Left:} Penrose diagram.  The red curves correspond to 3-dimensional light-like surfaces $\nullinf^-$ and $\nullinf^+$. 
{\mtext Note that in the case when the metric $g$ in $\R^3\times\R$ is time-independent,
the metric $\tilde g$ may be non-smooth near the points $i_+=(\pi,\{\NP\})$ and $i_-=(-\pi,\{\NP\})$. Also, $i_0=(0,\{\SP\})$ }
{\bf Right:} It suffices to consider the non-linear wave equation in the red shaded set $W$. The boundary $\p W$ consists of the past part that is a subset of $\mathcal I^-$ and subset of a Cauchy surface of $N_1=I^+_\Next(p_{-2})$ and the future part $\Sigma_f$ that is a subset of a smooth space-like surface.
\label{fig:penrose}}
\end{figure}

We  define
\beq\label{eq:sets W}
V^\pm =  \I^\pm_{\Next}(i_0)= ({{{N}}}^\pm)^\mathrm{int},
\eeq
and recall that (see Figure \ref{fig:penrose} (Left))
\ba
\Wminus &=& \I^-_{\Next}(i_0)\cap \I^+(p_-),\\
\Wplus &=& \I^+_{\Next}(i_0)\cap \I^-(p_+).
\ea

\commented{
As the manifold  $(\Next, g_\ext)$
 is globally hyperbolic, by \cite{Bernal2} there is a  diffeomorphism $\tilde \Phi:\Next\to \R\times \tilde \Sigma$, where $\tilde \Sigma$
 is a 3-dimensional manifold, and there are smooth maps ${\bf t}_0:\Next\to \R$ and $\sigma:\Next\to \tilde \Sigma$ 
 such that $ \tilde \Phi(x)=({\bf t}_0(x),\sigma(x))$ and that $\Sigma^0_{t_1}=\{x\in \Next \mid {\bf t}_0(x)=t_1\}$ are Cauchy surfaces. Without loss of generality, we can assume that $
 {\bf t}_0(i_0)=0$.
 
Let
$(\M,\gM)$ be a globally hyperbolic Lorentzian manifold  with an asymptotically Minkowskian infinity  that is visible in the whole space and let
$(\N,\gN)$ be a conformal manifold given in Definition \ref{def:asymptotically nice infinity A}.
Let $(\Next,\gext)$ be the corresponding
extended manifold.
 When
the map $\tilde \psi:\M\to \N$ is the conformal 
diffeomorphism that identifies $\M\to \N$,
we define a time function $\tilde {\bf t}_0:\M\to \R$ 
on $\M$ by setting $\tilde {\bf t}_0(x)={\bf t}_0(\tilde \psi(x))$.
}

\subsubsection{Two  time functions defining foliations}
Let us make the following auxiliary construction. 

Let (see Figure \ref{fig:penrose} (Left))
$$\N_1=I^+_{(\Next,\gext)}(p_{-2}),\quad \N_2=I^-_{(\Next,\gext)}(p_{+2}).$$ Also, let $\N_{12}=\N_1\cap \N_2$.
Observe that $(\N_1,\gext)$, $(\N_2,\gext)$ and $(\N_{12},\gext)$ are globally hyperbolic manifolds
and $\overline{\mathbb D_0}=J^-_{(\Next,\gext)}(p_{+})\cap J^+_{(\Next,\gext)}(p_{-})$ is a compact subset of $\N_1$ as well as a compact subset of $\N_2$.

As $(\N_j,\gext)$, $j=1,2$ are globally hyperbolic manifolds,  by \cite{Bernal2},
both these Lorentzian manifolds have a smooth time function 
\beq\label{time function tj}
{\bf t}_j:\N_j\to \R
\eeq
such that $$\Sigma_j(T)=\{x\in \N_j\mid {\bf t}_j(x)=T\},\quad T\in \R$$ are smooth
Cauchy surfaces of $\N_j$.

Let us next consider $N_1$.
 We denote 
$$
E^k([T',T''])= 
\bigcap_{j=0}^k C^j([T',T''];H^{j-k}(\Sigma_1(T)))$$ 
and 
$$
E^k_0([T',T''])= 
\bigcap_{j=0}^k C^j_0([T',T''];H^{j-k}(\Sigma_1(T)))
$$

The above setting is useful when we consider the
non-linear scattering problem on the set
\begin{equation}\label{eq:domain_for_wave_eq}
   W:=\{x\in \N_1 \mid {\bf t}_1(x)>T_1\}\cap  \{x\in \N_2\mid {\bf t}_2(x)<T_2\}\setminus  {{{N}}}^-
   \end{equation}
(see Figure \ref{fig:penrose}, Right)  and
\begin{equation}\label{eq:domain_for_wave_eq B}
   W_0:=\{x\in \N_1 \mid {\bf t}_1(x)>T_1\}\cap  \{x\in \N_2\mid {\bf t}_2(x)<T_2\}, 
\end{equation}
 where $T_1,T_2\in \R$ are chosen so that $\overline{\mathbb D_0}\subset W_0^{\mathrm{int}}.$

We will next consider a non-linear wave equation
in the domain $W$
 that is relatively compact in $\Next$, see Figure~\ref{fig:penrose}. Observe that the set $W$ can be covered with a finite number of
 coordinate neighborhoods of $\Next$. We consider $W$ primarily as a subset of a globally hyperbolic $\N_1$ space.

Note that the surfaces $\Gamma:=\{{x\in N_1\mid\bf t_1}(x)=T_1\} \cap \{x\in N_2\mid{\bf t_2}(x)=t\}$ are not necessarily smooth but we are going to consider the initial and boundary values which will imply that the solutions of the wave equations vanish identically near $\{x\in N_1\mid{\bf t_1}(x)=T_1\}$.

Below, we use $$T_{\mnewtext  1}^-<\min_{x\in \overline W}{\bf t}_{\mnewtext  1}(x),
\quad
T_{\mnewtext  1}^+>\max_{x\in \overline W}{\bf t}_{\mnewtext  1}(x).
$$

\section{Scattering functionals and the source-to-solution operator} 

\subsection{Existence of scattering functionals}

For $T_1<0$
let 
$$
\nullinf^-(T_1) := \nullinf^- \cap\{x\in N_1\mid {\bf t}_1(x)>T_1\}
$$
be the part of the past light-like infinity $\nullinf^-$, see Figure \ref{fig:penrose}, Right.
In the next theorem, we consider the well-posedness of the direct problem. For several physically relevant non-linear wave equations (e.g., with cubic or similar non-linear terms), more general
existence results can be obtained using the Baez-Segal-Zhou method \cite{Baez,Baez2}, but in the theorem below we will constructs 
special solutions whose extension to   the non-physical part of the space-time $N_-$  vanish in $N_-\cap U_0$, where 
 $U_0$ is  a neighborhood of the spatial infinity $i_0$ as it is convenient to analyze below the (conormal) singularites of such solutions.

\begin{theorem}\label{thm:nonlinear_goursat copy}
Let $k\ge 5$ and  $\kappa\geq 4$. 
Let $-\pi<t_1<0$. 
Let $T_1<0$ be  such that $S_-(R(t_1))\subset \nullinf^-(T_1)$. Moreover, let $T_2>0$
be such that ${\bf t}_2(p_+)<T_2$ and $W$ and $W_0$ be  the sets given in \eqref{eq:domain_for_wave_eq} and \eqref{eq:domain_for_wave_eq B}. Then,

\begin{enumerate}
    \item[(i)]\label{thm:nonlinear_goursat copy: exists}
{\mnewtext There {exist}  
$\e_0(t_1,T_1)>0$ and $C_0=C_0(t_1,n)$ such that
 when $0<\e<\e_0(t_1,T_1)$ and $G\in H^{k+1}(\nullinf^-)\cap \mathcal B_-(R(t_1))$  and
 $\|G\|_{H^{k+1}(\nullinf^-)}<\e$,
 the nonlinear Cauchy-Goursat problem
\begin{equation}\label{eq:nonlinear_goursat copy}
 \begin{cases}
\square_{g_\ext} {\tilde u} + {\mntext B}{\tilde u} +A{\tilde u}^\kappa= 0,&\text{in } W,\\
{\tilde u}=G, &\text{on } \nullinf^-(T_1),\\
{\tilde u}=0,&\text{on } \{x\in W\mid {\bf t}_1(x)<T_1\},
\end{cases} 
\end{equation}
has a unique solution ${\tilde u}\in H^{k}(W)$
satisfying $\|{\tilde u}\|_{H^{k}(W)}\le C_0\e$.

\item[(ii)] \label{thm:nonlinear_goursat copy: sources produce scattering data}
For any $n\in \mathbb Z_+$ there exist $\e_1(n)>0$   and $C_1=C_1(n)$ such that if $0<\e<\e_1(n)$ and   $\tilde{f}\in H^{k+1}_0(K_n)$ is
such that
\beq\label{source tilde f norm pre} 
\|\tilde f\|_{H^{k+1}(K_n)}\le \eps
\eeq
then the solution  $\tilde w\in H^k(W_0)$ (that exists due to Lemma~\ref{Lemma L-maps})   
 \begin{equation}\label{eq:linear_non-goursat2b pre}
 \begin{cases}
\square_{g_\ext} \tilde w + {\mntext B}\tilde w+A \tilde w^\kappa= \tilde f,\quad \text{in }W_0,
\\
\tilde w(x)=0\quad \hbox{for }{\bf t}_1(x)<T_1.
\end{cases}   
\end{equation}
and $G=\tilde w|_{\nullinf^-}$ satisfy
$G\in \mathcal B_-\cap H^{k}(\nullinf^-)$ and 
$\|G\|_{H^{k}(\nullinf^-)}\le C_1\e$.

\item[(iii)] \label{thm:nonlinear_goursat copy: scattering corresponds to source}
There {exist}  
$\e_2 (t_1,T_1)>0$, $C_2=C_0(t_1,T_1)$ and $n_0=n_0(t_1,T_1)$ such that
 when $0<\e<\e_2(t_1,T_1)$,
$G\in H^{k+1}(\nullinf^-)\cap \mathcal B_-(R(t_1))$,
 $\|G\|_{H^{k+1}(\nullinf^-)}<\e$,
 and $\tilde u$ solves \eqref{eq:nonlinear_goursat copy},
 then 
there is $\tilde f\in H^k(K_{n_0})$ satisfying
 \eqref{source tilde f norm pre} and  the solution 
 $\tilde w\in H^k(W)$ of \eqref{eq:linear_non-goursat2b pre}
such that  $\tilde w|_{W}={\tilde u}$,
and $G=\tilde w|_{\nullinf^-}$ and 
$\|\tilde f\|_{H^{k}(K_{n_0})}\le C_2\eps$.
}
\end{enumerate}
\end{theorem}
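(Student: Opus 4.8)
The plan is to handle the three parts in turn, parts (ii) and (iii) being mutually converse and both resting on the well-posedness established in part (i) together with the strong Huygens' principle on $\R\times\Sp^3$. For (i) I would first solve the \emph{linear} Cauchy--Goursat problem on $W$: given characteristic data $G$ on $\nullinf^-(T_1)$, the vanishing condition on $\{{\bf t}_1<T_1\}$, and a source $F$, produce a solution together with an energy estimate of the form $\|\tilde u\|_{H^k(W)}\le C\big(\|G\|_{H^{k+1}(\nullinf^-)}+\|F\|_{H^{k-1}(W)}\big)$; these higher-order estimates are the technical heart of the matter and are exactly what the appendix provides. The nonlinear problem is then solved by a contraction argument: let $\Lambda$ send $\tilde u$ to the solution of the linear Goursat problem with data $G$ and source $F=-A\tilde u^\kappa$. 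Since $k\ge 5>\frac{n+1}{2}$, the space $H^k(W)$ is a Banach algebra and $\tilde u\mapsto \tilde u^\kappa$ is locally Lipschitz, so for $\|G\|_{H^{k+1}(\nullinf^-)}<\e$ small enough the map $\Lambda$ preserves a ball of radius $C_0\e$ and contracts on it; its unique fixed point is the sought solution, and uniqueness in $H^k(W)$ follows from the same contraction estimate.

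The main obstacle in (i) is the Lipschitz corner of $\partial W$ near the spatial infinity $i_0$, where the standard energy method is not directly available. I would bypass it using the support hypothesis: since $G\in\mathcal{B}_-(R(t_1))$ is supported in the compact set $S_-(R(t_1))\subset\nullinf^-$, which avoids a fixed neighborhood of $i_0$, finite propagation speed forces $\tilde u$ to vanish near $i_0$. The solution is therefore unchanged if $W$ is smoothly modified away from $S(R(t_1))$, and the energy estimates may be run on a domain with smooth boundary.

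For (ii), given small $\tilde f\in H^{k+1}_0(K_n)$, Lemma~\ref{Lemma L-maps} yields the unique small solution $\tilde w$ with $\supp\tilde w\subset J^+_{\Next}(K_n)$. As $a$ is Schwartz in the asymptotically Minkowskian infinity, $A=a\,\omega_M^{\kappa-3}$ vanishes to infinite order on $\nullinf^-\cup\nullinf^+\cup i_0$ and extends by zero to $N^-$, while there $B=1$; hence on the part of $N^-$ between $K_n$ and $\nullinf^-$ the function $\tilde w$ solves the free equation \eqref{wave on sphere2 A1} with source $\tilde f$. By strong Huygens its support lies on the light-like geodesics issued from the compact set $K_n$, so $G:=\tilde w|_{\nullinf^-}$ is supported in a set of the form $S_-(R)$, disjoint from $i_0$ and $i_-$. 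Solving the homogeneous equation \eqref{wave on sphere2 A1} with this characteristic data then identifies $G$ with the past radiation field of a free solution whose $T=0$ Cauchy data lies in $\mathcal{E}'(P(R))\subset\mathcal{E}'(\Sp^3\setminus\{i_0\})$, i.e.\ $G\in\mathcal{B}_-$; the bound $\|G\|_{H^k(\nullinf^-)}\le C_1\e$ follows from the estimate of Lemma~\ref{Lemma L-maps} and a trace estimate on $\nullinf^-$.

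Part (iii) is the converse realizability statement, which I would prove by cutting off a free solution. Since $G\in\mathcal{B}_-(R(t_1))$, there is a homogeneous solution $v_0$ of \eqref{wave on sphere2 A1} on $\R\times\Sp^3$ with $T=0$ data in $\mathcal{E}'(P(R(t_1)))$ and $v_0|_{\nullinf^-}=G$, and by strong Huygens $\supp v_0\subset S(R(t_1))$ avoids neighborhoods of $i_0$ and $i_-$. Taking a time cutoff $\chi=\chi({\bf t}_1)$ that vanishes in the far past and equals $1$ near $\nullinf^-$, I set $\tilde f:=(\square_{\gext}+1)(\chi v_0)=[\square_{\gext},\chi]\,v_0$, a first-order expression supported in $\supp(d\chi)\cap S(R(t_1))$, which is a compact subset of $(N^-)^{\mathrm{int}}$ contained in some $K_{n_0}$ and satisfies $\|\tilde f\|_{H^k}\le C_2\|G\|_{H^{k+1}(\nullinf^-)}\le C_2\e$. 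Let $\tilde w$ solve \eqref{eq:linear_non-goursat2b pre} with this source. On $N^-$ the coefficients reduce to the free operator, so $\tilde w$ and $\chi v_0$ solve the same equation and both vanish in the far past; by uniqueness $\tilde w=\chi v_0$ there, whence $\tilde w|_{\nullinf^-}=G$. On $W$ the source vanishes, so $\tilde w|_W$ solves the nonlinear Cauchy--Goursat problem \eqref{eq:nonlinear_goursat copy} with data $G$, and the uniqueness from part (i) gives $\tilde w|_W=\tilde u$, as required.
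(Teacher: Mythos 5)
Your parts (ii) and (iii) are essentially sound and in fact closely mirror the paper's own argument: the paper proves existence in (i) precisely by your part-(iii) device (cut off the free Huygens solution $\tilde v$ with a cutoff $\rho$ to manufacture a source $\tilde f$ supported in a compact $K_{n}\subset (N^-)^{\mathrm{int}}$, solve the \emph{Cauchy} problem on the globally hyperbolic manifold $N_1$, and restrict to $W$), and then observes that (iii) ``follows by the construction of $\tilde f$ given in the proof of (i).'' The genuine gap is in your part (i), where you solve the Goursat problem directly on $W$ by contraction. Your treatment of the corner of $\partial W$ at $i_0$ rests on the claim that ``finite propagation speed forces $\tilde u$ to vanish near $i_0$'' because $\supp(G)=S_-(R(t_1))$ avoids a neighborhood of $i_0$. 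This is false: $\nullinf^-$ is exactly the past light cone of $i_0$ in $\R\times\Sp^3$, so $i_0\in J^+(p)$ for \emph{every} $p\in\nullinf^-$, and a direct computation in the product metric shows that every point of $W$ sufficiently close to $i_0$ lies in $J^+\bigl(S_-(R(t_1))\bigr)$. Hence finite speed of propagation gives no vanishing whatsoever near $i_0$. The mechanism that localizes the wave away from $i_0$ is the \emph{strong Huygens principle}, which holds only for the exactly free operator $\p_T^2-\Delta_{\Sp^3}+1$ on $\R\times\Sp^3$, i.e.\ only in $N^\pm$; inside $N$ the metric and the coefficients are merely Schwartz-close to the free ones, Huygens fails, and the iterates of your map $\Lambda$ will in general have nonzero (rapidly decaying, but not identically vanishing) tails near $i_0$. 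Consequently the hypothesis ``$\psi\equiv 0$ in a neighborhood of $i_0$'' under which the appendix estimates (Propositions \ref{prop:energy_inequality} and \ref{prop:energy_inequality uniqueness}) are proved is not available for your iterates, and your proposed remedy --- smoothly modifying $W$ away from $S(R(t_1))$ --- changes the problem, since the solution is not supported near $S(R(t_1))$.

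There is a secondary overstatement feeding the same gap: the appendix supplies an \emph{a priori} energy inequality and a uniqueness statement, not an existence theorem for the linear Cauchy--Goursat problem with source in higher-order Sobolev spaces on the Lipschitz domain $W$; so the linear solvability your contraction scheme needs is not ``exactly what the appendix provides.'' The paper's route avoids both difficulties at once: existence never requires solving a characteristic problem on $W$ (it is inherited from the standard Cauchy theory on $N_1$ with the source $\tilde f$), and the appendix estimate is invoked only for the \emph{free} equation on $\R\times\Sp^3$, where exact Huygens guarantees that the solution is supported in $S(R(t_1))$ and hence vanishes near $i_0$, so the corner hypothesis is legitimately satisfied. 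If you reorganize your proposal so that the cutoff construction of your part (iii) is used to \emph{prove} existence in (i), and uniqueness is taken from Proposition \ref{prop:energy_inequality uniqueness} (rather than from a contraction on $W$), your argument becomes essentially the paper's proof.
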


{\mnewtext The claim (i) and the Sobolev embedding theorem imply that under the 
assumptions on $h_-$ given in the claim, the scattering problem
\eqref{general non-linear problem on M}-\eqref{general non-linear problem on M3}
has a solution.}

\begin{proof}
(i) As $G\in  \mathcal B_-(R(t_1))$, 
it follows from the definitions of $\mathcal B_-(R)$ and $R(t_1)$ that
there are $(\phi_0,\phi_1)\in \mathcal E'(P(R(t_1)))^2$
and a solution $\tilde v$ of \eqref{wave on sphere2 A1}-\eqref{wave on sphere2 A2}, that is,
\beq\label{wave on sphere2 cp2} 
& &(\p_T^2-\Delta_{\mathbb S^3}+1)\tilde v=0,\quad \hbox{on }\R\times \mathbb S^3,
\\ & &(\tilde v|_{T=0},\p_T\tilde v|_{T=0})=(\phi_0,\phi_1), \nonumber
\eeq
such that $\tilde v|_ {\nullinf^-}=G$. Moreover, by using the finite speed of wave propagation
and  the
strong Huygens' principle,
we see that
 the function $\tilde v$ 
vanishes in the open set $V_0=( \R\times \mathbb S^3)\setminus S(R(t_1))$ that contains $i_0$ and  $i_-$. 

{\mnewtext  Recall that $\hat \mu(s)=(s,\SP)$.
We see that there are $t'_{-1},t'_{-2}\in (0,-2\pi)$, $t'_{-1}>t'_{-2}$ 
such that for $S(R(t_1))\cap \hat \mu(0,-2\pi) \subset \hat \mu (t'_{-2},t'_{-1})$
Then, $S(R(t_1))\subset \{\gamma_{p,\xi}(\R_+)\ \mid p\in \hat \mu (t'_{-2},t'_{-1}),
\quad \xi\in L^+_p(( \R\times \mathbb S^3))$.

Let now $0>t'_{0}>t'_{-1}$.}
Then, for the values 
 $0>t'_{0}>t'_{-1}>t'_{-2}>-2\pi$ 
 and  the points  $p'_{0}=(t'_{0},\SP)$, $p'_{-1}=(t'_{-1},\SP)$ and  $p'_{-2}=(t'_{-2},\SP)$
it holds that the set $B=J^-_{N^-}(p'_{-1})\cap J^+_{{N} ^-}(p'_{-2})$, 
is  a compact set
 $B\subset ({N}^-)^\mathrm{int}$ that satisfies \footnote{See the blue set in the Figure \ref{fig:sets B}, Right.}
\beq\label{B set condition 1}
& &B\subset  I_{{N}^-}^-(p'_0),\\
\label{B set condition 2}
& &( S(R)\cap {{N}}^-) \setminus  I^-(p'_0)\subset I_{{{N}}^-}^+(B^\mathrm{int}).
\eeq

Let  $n\in \mathbb Z_+$ be such that the compact
set $K_n\subset {{N}}^-$  satisfies
\beq
\label{B set condition 3}
& &B\subset (K_n)^\mathrm{int},\\
\nonumber
& &J^+(B)\cap J^-(p_0')\subset (K_n)^\mathrm{int}.
\eeq
As $G\in   \mathcal B_-(R(t_1))$, 
the solution $\tilde v$ of \eqref{wave on sphere2 cp2} 
satisfies
\beq\label{eq: support v tilde}
\supp(\tilde v)\subset S(R),\quad \supp(G)\subset S_-(R)=S(R)\cap \nullinf^-.
\eeq
As  $\|G\|_{H^{k+1}(\nullinf^-)}<\e$, the formula \eqref{eq: support v tilde} and  the energy estimate for
 the Goursat problem in the set $S(R)\subset \R\times \mathbb S^3$, we see using Proposition~\ref{prop:energy_inequality} in the Appendix for the linear wave equation
 that 
there is  $C_1=C_1(t_1)$ such that 
$\phi_0={\tilde v}|_{T=0}\in H^{k+1}(\mathbb S^3)$ and 
$\phi_1=\p_T{\tilde v}|_{T=0}\in H^{k}(\mathbb S^3)$
satisfy
\beq
\|\phi_0\|_{H^{k+1}(\mathbb S^3)}
+\|\phi_1\|_{H^{k}(\mathbb S^3)}\le C_1(t_1)\eps.
\eeq
These and the standard energy
estimates for the Cauchy problem for the linear wave equation, see   \cite[Prop.\ 9.12]{Ringstrom} and \cite[Thm. 3.7, p. 596] {ChBook}, imply that 
the solution $\tilde v$ of \eqref{wave on sphere2 cp2} 
satisfies
\ba
{\tilde v}\in  \bigcap_{\ell =0}^{k+1} C^\ell([-2\pi ,2\pi ];H^{k-\ell}(\mathbb S^3))\subset 
H^{k+1}([-2\pi ,2\pi ]\times \mathbb S^3)
\ea
and there is $C'_2(t_1)>0$ such that
\beq\label{norm of tilde V}
\|\tilde v\|_{H^{k+1}([-2\pi ,2\pi ]\times \mathbb S^3)}\le C'_2(t_1)\eps.
\eeq

 Let us now choose a function $\rho\in C^\infty(\R\times \mathbb S^3)$ such that 
  \beq\label{supp of rho and SR}
& &\rho(x)=1,\quad \hbox{ for } x\in   S(R)
 \setminus   I_{{{N}}^-}^-(p'_0),\\
\label{supp of rho in K}
& &\supp(\rho) \subset I^+_{\R\times \Sp^3}(K_n),
\\
\label{supp of rho minus p'0}
& & \supp(\rho) \cap I^-_{{{N}}^-}(p'_0)\subset K_n.
\eeq

Let us define
\beq\label{new wave}  
\tilde v_1(x)=\rho(x)\tilde v(x),\quad\hbox{for }x\in \R\times \mathbb S^3.
\eeq 
Then, $\tilde v_1$ satisfies the equation
\beq\label{wave on sphere2 sourceB} 
& &(\p_T^2-\Delta_{\mathbb S^3}+1)\tilde v_1=\tilde f,\quad \hbox{on }\R\times \mathbb S^3,
\\ 
\nonumber & &\supp(\tilde v_1)\subset J^+(\supp(\tilde f)).
\eeq
As  $\rho=1$ in $S(R)
 \setminus   I_{{{N}}^-}^-(p'_0),$ we have $\supp(\tilde f)\cap   I_{{{N}}^-}^-(p'_0)=\emptyset.$
 Moreover, by \eqref{supp of rho minus p'0},  $\supp(\tilde f)\cap  I_{{{N}}^-}^-(p'_0)\subset K_n$.
These yield that
 \beq\label{supp of rho}
\supp(\tilde f) \subset K_n.
\eeq
Moreover,  as $\rho=1$ on $\nullinf^-\cap S(R)$, the formula \eqref{supp of rho and SR} implies that 
\beq\label{wave on sphere2 source Goursat value} 
\tilde v|_{\nullinf^-}=G.
\eeq
Then,  by using  formulas \eqref{norm of tilde V}, \eqref{new wave},  and \eqref{wave on sphere2 sourceB} 
we see that 
\beq\label{source tilde f norm} 
\|\tilde f\|_{H^{k}([-2\pi ,2\pi ]\times \mathbb S^3)}\le C'_3(t_1,n)\eps.
\eeq

When we consider $K_n$ as a compact subset
of $N_1$, we see that 
there is $T_1\in \R$ such that $K_n\subset \{x\in N_1 \mid {\bf t}_1(x)>T_1\}$.

Observe that the function $\tilde f|_{ ({N}^-)^\mathrm{int}}$  in $H^{k}( ({{N}}^-)^\mathrm{int})$ that is
compactly supported in $K_n\subset ({N}^-)^\mathrm{int}$
can be continued from the set ${N}^-$ by zero to a function in $N_1$
so that the obtained function, that we continue to denote by $\tilde f$,
satisfies $\tilde f \in E^{k-1}([T_1^-,T_1^+])$ and  $\|\tilde f\|_{E^{k-1}([T_1^-,T_1^+])}\le C_3(t_1,n)\eps$.

By using local existence results for the Cauchy problem for the non-linear wave equation on
the globally hyperbolic manifold $N_1$ with a  source supported in the compact set $K_n$,
see \cite[Thm. 3.12] {ChBook},
we see that when $\eps$ is assumed to be small enough, there exists $\tilde w\in  \bigcap_{\ell =0}^{k}C^\ell([T_1^-,T_1^+];H^{k-\ell}(\Sigma_1(T))$ that satisfies
 \begin{equation}\label{eq:linear_non-goursat2b}
 \begin{cases}
\square_{g_\ext} \tilde w + {\mntext B}\tilde w+A \tilde w^\kappa= \tilde f,\quad \text{in }\{x\in N_1 \mid
{\bf t}_1(x)<T_1^+\},\\
\tilde w(x)=0\quad \hbox{for }{\bf t}_1(x)<T_1^-
\end{cases}   
\end{equation}
and $$\|\tilde w\|_{E^{k}(T_1^-,T_1^+)}\le C'_4(t_1,n)\e.$$
Observe that in ${{N}}^-$ we have $\tilde w=\tilde v_1$, as the functions $A$ and $B$ vanishes in  ${{N}}^-$
and the function $\tilde f$ is supported in the set where ${\bf t}_1(x)\ge T_1^-$.
Thus, $ \tilde w= \tilde v_1= \tilde v=G$ on $ \nullinf^-$. Hence, ${\tilde u}=\tilde w|_{W}$ 
is a solution of 
\begin{equation}\label{eq:non-linear_goursat B2}
 \begin{cases}
\square_{g_\ext} {\tilde u} + {\mntext B}{\tilde u} +A{\tilde u}^\kappa= 0,&\text{in } W,\\
{\tilde u}=G, &\text{on } \nullinf^-(T_1),\\
{\tilde u}=0,&\text{on } \{x\in W\mid {\bf t}_1(x)<T_1\},
\end{cases}   
\end{equation}
and satisfies $\|\tilde w\|_{H^k(W)}\le C_4\e$.

Summarizing the above, we have shown that  when $\eps$ is small enough,
for any $G\in  \mathcal B_-(R(t_1))$ satisfying
$\|G\|_{H^{k+1}(\nullinf^-)}<\e$, there is
 $\tilde f$ satisfying  $\|\tilde f\|_{H^k(W)}\le C_5\|G\|_{H^{k+1}(\nullinf^-)}$, and
a unique solution $\tilde w$ for the equation
 \eqref{eq:linear_non-goursat2b}
and that  ${\tilde u}=\tilde w|_{W}$ is a
 solution  for the equation \eqref{eq:non-linear_goursat B2}.
 In particular, we have shown the existence of solutions for the equation \eqref{eq:non-linear_goursat B2}.

{\mnewtext 
Next, we consider the uniqueness of the solutions for the equation \eqref{eq:non-linear_goursat B2}.
The results of Nicolas \cite{Nicolas} for the linear wave equation
imply, see the Proposition 
 \ref{prop:energy_inequality uniqueness}
 in the Appendix, 
  for the Goursat problem for the non-linear 
 equation  that
the solution ${\tilde u}$ of \eqref{eq:non-linear_goursat B2}, when it exists, is unique.
This yields the claim (i).

(ii) For any $n$ there is $R=R_n$ such that $K_n\subset S(R)$. 
As the coefficient $A$ of the non-linear term vanishes in $N^-$, we see that the solution $\tilde w$ of 
\eqref{eq:linear_non-goursat2b} coincides with the solution of the corresponding linear wave equation and thus by
the strong Huygen's principle \cite{Lax},
$\supp(\tilde w)\cap N^-$ is contained in $ S(R)$. Hence $\tilde w|_{\nullinf^-}\in \mathcal B^-(R)$.
By Lemma~\ref{Lemma L-maps}, $\tilde w|_{\nullinf^-}\in H^k(\nullinf^-)$. Theise imply the claim (ii).

Finally the claim (iii) follows by the construction of $\tilde f$
given in the proof of the claim (i).}
\end{proof}

Let $\kappa\geq 4$, $-\pi<t_1<0$ and  $q\in \nullinf^+$. 
For $t_2$, we choose $T_2\in \R$ such that
\beq
p_+,q\in   \{x\in W\mid {\bf t}_2(x)<T_2\}.
\eeq

By Theorem  \ref{thm:nonlinear_goursat copy}, there are $\e>0$ and $C_0$ such that if
 $h_-\in H_0^{k+1}(\nullinf^-)\cap \mathcal B_-(R(t_1))$ 
then  the nonlinear Cauchy-Goursat problem 
\eqref{eq:nonlinear_goursat copy} with $G=h_-$
has a unique solution $\tilde u$.
Let us  denote $ u(x)=\omega_M(x) \tilde u(x)$ the solution of the 
non-linear scattering problem
\begin{equation}\label{general non-linear problem on M cp}
\begin{cases}
  \square_{\gM}  u(x)+{\mntext d(x)} u(x) + {\mltext a(x)}\, u(x)^{\kappa}=0, &\text{in } 
   \{x\in \M\mid {\bf t}_2(x)<T_2\}
  \\ 
  u(x) = h_-(x) & \hbox{on } \mathcal I^-\cap \{x\in \M\mid {\bf t}_2(x)<T_2\},\\
  u=0 &\hbox{on }M\setminus J^+_\M(\supp(h_-)),
  \end{cases}    
\end{equation}
that  satisfies
$$
\|\omega_M^{-1}{\tilde u}\|_{ C([T_1^-,T_1^+]; H^k(\Sigma_1(T)))\cap C^{k}([T_1^-,T_1^+]; L^2(\Sigma_1(T)))}<C_0\e.
$$

In the set
\beq
 \label{domain of S copy} &&
\mathcal D(S_{t_1,{q}})=\mathcal D_{(\e)}(S_{t_1,{q}})
= \{h\in H^k(\nullinf^-)\cap \mathcal B_-(R(t_1))\mid 
 \|h\|_{H^k(\nullinf^-)}<\e\}\hspace{-1mm}
\eeq
we have a well-defined  map
\beq
& &S_{t_1,{q}}:\mathcal D(S_{t_1,{q}})\to \R,\\
& &S_{t_1,{q}}(h_-)=\tilde u(q),
\eeq
where $\tilde u$ solves \eqref{general non-linear problem on M cp}. {\mnewtext  In particular, this implies that the scattering
functionals $S_{t_1,{q}}$ are well-defined.}

The next goal is to study the globally hyperbolic space-time $(\Next, \gext)$, where the sources are supported in $ {{{N}}}^-$ and
the waves are observed on  ${{{N}}}^- $, and  aim to construct the conformal type of $(\mathbb D, \gext)$.
{We recall that $\gext$ coincides with $\gN$ within $\N$.}

We recall that $ \mathcal V_n\subset H^k_0( 
K_n)$ are sufficiently small neighborhoods of the zero function
and we have defined the source-to-solution maps 
\beq\label{L maps defined copied}
L_{g_{\ext},B,A,p_+,K_n}: \mathcal V_n\subset H^k_0(K_n) \to
L^2({{N}}^+),
\eeq
that map $L_{g_{\ext},B,A,p_+,K_n}f = u|_{{{N}}^+}$, where $u$ solves \eqref{eq:nonlinear wave equation in Penrose 2}.

\subsection{Scattering functionals determine the source-to-solution operator}

Next we prove Theorem~\ref{t:scattering_operator_determines V}, that is,
the  scattering functional determine the near field measurements, i.e., 
source-to-solution operator. 

\begin{proof} (of Theorem~\ref{t:scattering_operator_determines V})
Given a source $f\in \mathcal V_n\subset H^k_0(K_n)$, we solve on both manifolds $N_{\ext}^{(j)}$ the following linear initial value problem
 \begin{equation}\label{eq: initial with f}
\begin{cases}
(\square_{\gext}+{\mntext B}) u^{(j)}  = f,\quad \hbox{in }N_{\ext}^{(j)},\\
\supp(u^{(j)})\subset J^+_{N_{\ext}^{(j)}}(\supp(f)).
\end{cases}
\end{equation}

In ${{N}}^-$ it holds that $u^{(j)}=\tilde u^{(j)}|_{{{N}}^-} $ where $\tilde u^{(j)}$ is  the wave equation 
\beq\label{wave on sphere} 
& &(\p_T^2-\Delta_{\mathbb S^3}+1)\tilde u^{(j)}=f,\quad \hbox{on }\R\times \mathbb S^3,
\\
& &\nonumber
\supp(\tilde u^{(j)})\subset J^+(\supp(f)).
\eeq 
The
strong Huygen's principle, see \cite{Lax}, implies then that 
\begin{align*}
\supp(\tilde u^{(j)}) \subset \{\gamma_{x,\xi}(s) &\mid x\in \supp(f),\ \xi\in L_x^+(\R\times \mathbb S^3),\ 
s\ge 0\}.    
\end{align*}
Recall that $\supp(f)\subset K_n$ where $K_n\subset  ({N}^-)^\mathrm{int}\subset I^-(i_0)$ is compact
and that no geodesics connecting a point $x\in K_n$ to $i_0$ is  light-like.
This and the strong Huygen's principle imply
that $i_0$ has a neighborhood $V_0\subset \R\times \mathbb S^3$ such that
 $\supp(\tilde u^{(j)})\cap V_0=\emptyset$. Moreover, by causality, $\tilde u^{(j)}$ vanishes
 in a neigborhood of $i_-$. These imply that 
 \beq
 \tilde u^{(j)}|_{{\nullinf}^-}\in \mathcal B_-(R(t_1))
 \eeq
 with some $-\pi<t_1<0$.
 
As the subsets  ${{N}}^-$ of the manifolds $N_{\ext}^{(j)}$, $j=1,2$ coincide on both manifolds  (i.e. those are isometric), the boundary values of the solutions on $ {\nullinf}^-$
coincide, 
\beq
u^{(1)}|_{{\nullinf}^-}=u^{(2)}|_{{\nullinf}^-}.
\eeq
By assumption, scattering functionals 
$S_{\M^{(j)},\gM^{(j)},a^{(j)};t_1,{q}}=S^{(j)}_{t_1,{q}}$, $j=1,2$ coincide for all $-\pi<t_1<0$ and 
$q\in \nullinf^+$. 

Let us next use  $-\pi<t_1<0$ such that for $R=R(t_1)$ the condition
\beq
\label{B set condition inverse 1}
\mathcal B(K_n)\subset S(R)
\eeq
holds, where
\begin{align}
\mathcal B(K_n):= \{\gamma_{x,\xi}(s)\mid\ x\in K_n,
 \xi\in L_x^+(\R\times \mathbb S^3),\ 
s\ge 0\}.
\end{align}
Moreover, we use $q\in \nullinf^+$ such that 
$$
J^-_{{{N}}^+}  (p_+)\cap\nullinf^+ \subset  \nullinf^+(q).
$$ 
Then, for any $q'\in  \nullinf^+(q)$
\begin{equation}
\begin{split}
u^{(1)}|_{\nullinf^+(t_2)}(q')&=S^{(1)}_{t_1,{q}} u^{(1)}(q')=S^{(2)}_{t_1,{q}} u^{(2)}(q')=
u^{(2)}|_{\nullinf^+(t_2)}(q').   
\end{split}
\end{equation}
Using $h_+=u^{(1)}|_{\nullinf^+(t_2)}$,
we solve for both manifolds $(\M^{(j)},\gM^{(j)})$, $j=1,2$ a linear Goursat problem
 \begin{equation}\label{eq: initial with future Goursat}
\begin{cases}
(\square_{g_{\ext}^{(j)}}+{\mntext B^{(j)}}) u^{(j)}  = 0,\quad \hbox{in }x\in {{{N}}}^+\cap J^-_{N_{\ext}^{(j)}}(p_+),\\
u^{(j)}\big|_{{\nullinf}^+\cap  J^-_{N_{\ext}^{(j)}}(p_+)}={h_+}.
\end{cases}
\end{equation}
By \cite{H90,Nicolas}, the Goursat problem \eqref{eq: initial with future Goursat} has a unique solution
in $H^1({N}^+\cap J^-_{N_{\ext}^{(j)}}(p_+))$. 
Hence, we see that the solutions of  the equations \eqref{eq: initial with future Goursat} with $j=1,2$ (defined using the two manifolds  $(\M^{(j)},g^{(j)})$) coincide on the set ${N}^+\cap J^-_{N_{\ext}^{(1)}}(p_+)=
{N}^+\cap J^-_{N_{\ext}^{(2)}}(p_+)$. This implies that the source-to-solution operators satisfy $L_{g_{\ext,1},B_{1},A_1,p_+,K_n}(f)=L_{g_{\ext,2},B_{2},A_2,p_+,K_n}(f)$
for all $f\in \mathcal V_n$. 
This proves the claim.
\end{proof}

\section{Microlocal analysis of the source-to-solution operator}

Below, we consider the inverse problem when the sources are supported in $K_n\subset 
({N}^-)^\mathrm{int}$
and the waves are observed in  $({N}^+)^\mathrm{int}$. As  $({N}^-)^\mathrm{int}$
is in the chronological past of $i_0$ and   $({N}^+)^\mathrm{int}$
is in the chronological future of $i_0$, the set where the sources are supported and the
set where the wave are observed are causally separated. This situation causes
difficulties: As an example, let us consider the Lorentzian manifold $\R\times \Sp^3$ and the sets
$\omega_j=(-2j\pi,0)\times \Sp^3$, $j=1,2$.
For the sake of presenting a simple example, let us assume that 
we can use sources on
$(\R\times \Sp^3)\setminus \omega_j$ and make observations on the set 
$U=(0,\infty)\times \Sp^3$ to determine the light observation sets
of points $q$ in $\omega_j$, that is, 
$$\mathcal E_U(\omega_j)=\{\mathcal E_U(q) \mid q\in \omega_j\}.$$

Then, as all great circles of $\Sp^3$ are closed geodesics of length $2\pi$,
we see that $\mathcal E_U(\omega_1)=\mathcal E_U(\omega_2)$,
that is, the light observation sets
of points in $\omega_1$ and  $\omega_2$ coincide and these
sets are indistinguishable using light observation sets. 
Observe that $\R\times \Sp^3$ is a special manifold
in the sense that antipodal points on $\Sp^3$ give rise to conjugate
points for light-like geodesics. Due to these observations, below in Section
\ref{sec: proof of main thm}, we will
consider inverse problem for  the source-to-solution maps with causally separated
sources and observations paying
particular attention to the cut-points and conjugate points.
Before that, in this section we give a modification of the notations on conormal sources
and interacting waves introduced in \cite{KLU2018,LUW}. Instead of reconstructing
the space-time using a layer striping process done in \cite{KLU2018,LUW}, we show
that the light observation set $\mathcal E_{{{{N}}}^+}(q)$ of the point $q$
can be directly reconstructed if the distant areas of the space-time can be reconstructed 
when the point $q$ can be connected to the set ${{{N}}}^-$
with a light-like geodesic that have no cut points.
Moreover,
to be able to change the conformal factor of the metric, we will pay attention to
the fact that many of the construction steps are independent of the coefficient $B$ of the 
zeroth order term in the wave equation.

\subsubsection{Definitions in microlocal analysis and nonlinear interactions}

Below,  we use that $\Nextended$ and its subset $W$ are  globally hyperbolic Lorentzian manifolds of dimension $(1+3)$.
Observe that below the coefficient $B$ is allowed to be a general smooth function on $\Next.$
To simplify the notations, we denote the metric $\gext$ of  $\Nextended$ just by $g$.
We will consider the equation
\beq\label{non-linear wave equation on cal M}
\begin{cases}
\big(\square_g + B\big)u +Au^{\kappa}=f,&\text{in }W,\\
u=0,\text{ in } \quad {\mtext W}\setminus J^+(p^-),
\end{cases}
\eeq
where $f\in \cup_n \mathcal V_n\subset H_0^{k}(\Wminus\setminus J^+(p^-))$.
We assume that we are given  the source-to-solution maps $
L_{g_{\ext},B,A,p_+,K_n}: f\mapsto u|_{{{N}}^+\cap I^-_{\Next}(p_+)}$ where $p_+\in \hat \mu(0,\pi)$.

Below, we use  for a pair $(x,\xi)\in T\Nextended$  the notation 
\[
(x(t),\xi(t))=(\gamma_{x,\xi}(t),\dot \gamma_{x,\xi}(t)).
\]
Let $g^+$ be a smooth Riemannian metric on $\Nextended$.

Let  $\eta_0\in L^+_{i_0}\Nextended$,
$\norm{ \eta_0}_{g^+}=1$.
Since $\rho(x,\xi)$ on $(\Nextended,g)$ is lower semi-continuous and 
$\overline {\mathbb D}_0=J^+(p^-)\cap J^-(p^+)$ is compact, we see that for all $\e_0>0$ there are $\delta_0,\delta_1>0$ such that for $\widehat{x} = \mu_\i(s_0)$, $ -\delta_0\le s_0\le 0$, $x\in {\mathbb D}_0,$ with $d_{g^+}(x,\widehat{x})<\delta_1$ and $\xi\in L^+_x\Nextended,$ with $\norm{ \xi }_{g^+}=1$, we have $\rho(x,\xi)>\rho(i_0,\eta_0)-\e_0$.

\subsubsection{Observation time functions}

Let us define the observation time functions as in \cite{KLU2018}:

{\mltext

\begin{definition}
Let $\mu: [s^-,s^+]\to \Nextended$  be a time-like path. We define $f_\mu^+(x),f_\mu^-(x)\in \R$ by the formulae
\begin{align*}
f_\mu^+(x)&:= \inf(\{s\in [s^-,s^+]\mid \tau(x,\mu(s))>0\}\cup\{s^+\}),\\
f_\mu^-(x)&:= \sup(\{s\in[s^-,s^+]\mid \tau(\mu(s),x)>0\}\cup\{s^-\}).
\end{align*}
The value $f_\mu^+(x)$ is called the earliest observation time from the point $x$ on the path $\mu$. 

\end{definition}

}

\subsubsection{Notation for the sources and Lagrangian submanifolds}

We will consider sources supported in $\Wminus$ and observations made in $\Wplus$.
For $x_0\in \Wminus$, $\zeta_0\in L_{x_0}^+\Nextended$, and $s_0>0$ we define 
\begin{equation}\label{eq:set_V}
\mathcal{V}_{x_0,\zeta_0,s_0}
=
\{
\eta\in T_{x_0} \Nextended \mid \Vert \eta-\zeta_0\Vert_{g^+} < s_0,\, \Vert \eta\Vert_{g^+} =\Vert\zeta_0\Vert_{g^+}
\}.
\end{equation}
Let $\mathcal{W}_{x_0,\zeta_0,s_0} = L_{x_0}^+\Next\cap\mathcal{V}_{x_0,\zeta_0,s_0}$ and define
\begin{equation}\label{eq:set_K}
K(x_0,\zeta_0,s_0)
=
\{
\gamma_{x_0,\eta}(t)\in \Nextended \mid \eta\in \mathcal{W}_{x_0,\zeta_0,s_0},\, t\in (0,\infty)
\}.
\end{equation}
We also define
\begin{equation}\label{eq:sets_Sigma_Lambda}
\begin{split}
\Sigma(x_0,\zeta_0,s_0) &= \{
(x_0,r\eta^\flat)\in T^*\Nextended \mid \eta \in \mathcal{V}_{x_0,\zeta_0,s_0},\, r\in \R\setminus\{0\}
\},\\
\Lambda(x_0,\zeta_0,s_0) &=
\{
(\gamma_{x_0,\eta}(t),r\dot\gamma_{x_0,\eta}(t)^\flat)\in T^*\Nextended \mid
\eta\in \mathcal{W}_{x_0,\zeta_0, s_0},
\\&\quad\quad\quad\quad\quad\quad\quad\quad\quad\quad t\in (0,\infty),\, r\in \R\setminus\{0\}
\}.
\end{split}
\end{equation}
Observe that $\Lambda(x_0,\zeta_0,s_0)$
is a Lagrangian submanifold of $\Next$.
Roughly speaking, near the points where $K$ is a smooth manifold, $\Lambda$ is its conormal bundle.
Intuitively, $K$ is the light-cone associated to a small spherical cap $\mathcal{V}$, $\Sigma$ is the vertex of this cone in the cotangent space, 
c.f.\ \cite{Vasy1,Vasy2} or related results on scattering from corners, and $\Lambda$ is the set to which this spherical cap propagates under the Hamiltonian flow. At the limit $s_0\to 0$ the set $K$ tends to the  geodesic $\gamma_{x_0,\eta}$. We will soon construct sources with singularities on $\Sigma$. The corresponding solutions propagate along geodesics and their singularities propagate in $\Lambda$. This allows us to use the conormal calculus to study singularities produced in collision of such waves.

Let $p(x,\xi)=g^{ij}\xi_i\xi_j$ be the principal symbol of $\square_g$.
The Hamilton vector field of $p$ is denoted by $H_p$ and it is given in local coordinates by
$$
H_p = \sum_{j=0}^3 \left(
\frac{\p p}{\p\xi_j}\frac{\p}{\p x^j}-\frac{\p p}{\p x^j}\frac{\p}{\p \xi_j}
\right).
$$
The integral curves of $H_p$ are called null bicharacteristics, denoted by $\Theta$.
We denote by $\Theta_{x,\xi}$ the bicharacteristic containing  {\color{black}$(x,\xi)\in L^*\Nextended$}.
It is worth noting that $(y,\eta)\in \Theta_{x,\xi}$ if and only if 
$(y,\eta^\sharp) = (\gamma_{x,\xi^\sharp}(t),\dot \gamma_{x,\xi^\sharp}(t))$ for some $t\in \R$, where $\gamma_{x,\xi^\sharp}$ is the light-like geodesic with $(x,\xi^\sharp)\in L{\Nextended}$.

Now, denoting by $\mathrm{char}(\square_{g}) = \{ (x,\xi)\in T^*{\Nextended}\mid p(x,\xi)=0 \}$ the characteristic variety of $p$,
we have that $\Lambda(x_0,\zeta_0,s_0)$ is the Lagrangian submanifold that is obtained by flowing-out of $\mathrm{char}(\square_{g})\cap\Sigma(x_0,\zeta_0,s_0)$ by the Hamilton flow of $p$ in the future direction. %

We will use conormal and Lagrangian distributions to analyse the propagation of singularities.
To do this, let us recall some relevant notations and definitions.
Suppose $X$ is a smooth $n$-dimensional manifold and $\Lambda\subset T^*X\setminus\{0\}$ is a Lagrangian submanifold.
Let $(x,\theta)\in X\times \R^n$ and $\phi(x,\theta)$ be a non-degenerate phase function, which locally parametrizes $\Lambda$ near $(x_0,\xi)\in\Lambda$, that is, in some conic neighbourhood $\Gamma \subset T^*X\setminus\{0\}$ the set $\Lambda\cap \Gamma$ coincides with the set $\{ (x, d_x\phi(x,\theta))\in \Gamma \mid d_\theta\phi(x,\theta)=0\}$.
The set of classical Lagrangian distributions $I^m(X;\Lambda)$ is then defined to be those distributions $u\in \mathcal{D}'(X)$ that can be represented in local coordinates $X: V\to \R^n$,
defined in an open set $V\subset X$, as an oscillatory integral {\color{black} (modulo a $C^\infty$ function)} of the form
$$
u(x) = \int_{\R^N} e^{i\phi(x,\theta)} a(x,\theta) d\theta, \quad x\in V.
$$
Here $a(x,\theta) \in S^{m+\frac{n}{4}-\frac{N}{2}}(V;\R^N)$ is a classical symbol of order $m+\frac{n}{4}-\frac{N}{2}$.
Corresponding to a classical Lagrangian distribution $u\in I^m(X;\Lambda)$ there is a principal symbol $\sigma_u^{(p)}(x_0,\zeta_0)$, $(x_0,\zeta_0)\in \Lambda$, which satisfies
$$
\sigma_u^{(p)}(x_0,\zeta_0) \in S^{m+\frac{n}{4}}(\Lambda,\Omega^\frac{1}{2}\times L)/
S^{m+\frac{n}{4}-1}(\Lambda,\Omega^\frac{1}{2}\times L),
$$
where $L$ is the Maslov-Keller line bundle and $\Omega^\frac{1}{2}$ is the half-density on $X$.
When $\Lambda$ is a conormal bundle of a smooth submanifold $S\subset X$, that is,
$\Lambda=N^*S$, 
the distributions $u\in I^m(X;\Lambda)$ are called conormal distributions.

Furthermore, we will need distributions associated to two cleanly intersecting Lagrangians \cite{GuU1,MU1}.
We say that two Lagrangians $\Lambda_0,\Lambda_1\in T^*X\setminus\{0\}$ intersect cleanly 
 if
$$
T_p \Lambda_0 \cap T_p \Lambda_1 = T_p(\Lambda_0\cap \Lambda_1)
$$
for all $p\in \Lambda_0\cap \Lambda_1$.
The set of  {\color{black}  distributions associated to two Lagrangian manifolds} $\Lambda_0$ and $\Lambda_1$ is denoted by $I^{k,l}(X;\Lambda_0,\Lambda_1)$.
It is known that if $u\in I^{k,l}(X;\Lambda_0,\Lambda_1)$, then $\WF(u) \subset \Lambda_0\cup\Lambda_1$ and moreover microlocally away from $\Lambda_0\cap\Lambda_1$ we have $u\in I^{k+l}(X;\Lambda_0\setminus\Lambda_1)$ and $u\in I^k(X;\Lambda_1\setminus\Lambda_0)$.
Since all cleanly intersecting Lagrangian submanifolds with given dimension of intersection are locally equivalent (see \cite{GuU1} or Theorem 3.5.6 of \cite{Duistermaat}), it will be enough to consider only model Lagrangians in the Euclidean case.
Let us denote $(x_1,\ldots,x_n) = (x',x'',x''')\in \R^n$, where $x' = (x_1,\ldots,x_{d_1})$, $x'' = (x_{d_1+1},\ldots, x_{d_1+d_2})$ and $x''' = (x_{d_1+d_2+1},\ldots,x_n)$.
Following \cite{GU1993}, we represent the distributions using the 
Lagrangian distributions %
$$
\Lambda_0 = N^*\{x'=x''=0\} \text{ and } \Lambda_1 = N^*\{x'=0 \}.
$$ 
Then $u\in I^{k,l}(\R^n;\Lambda_0,\Lambda_1)$ can be explicitly written in terms of oscillatory integrals  {\color{black} (modulo a $C^\infty$ function)} as
$$
u(x) = \int_{\R^{d_1+d_2}} e^{i (x'\cdot\theta' + x''\cdot \theta'')} a(x,\theta',\theta'')d \theta' d\theta''.
$$
Here the symbol $a(x,\theta',\theta'')\in S^{\mu_1,\mu_2}(\R^n;(\R^{d_1}\setminus\{0\})\times \R^{d_2})$ is of product type, that is, $a\in C^\infty(\R^n\times \R^{d_1}\times \R^{d_2})$ and
for all compact subsets  $ K\subset \R^n$ it holds that
$$
|\p_{x'''}^\gamma \p_{\theta'}^\alpha \p_{\theta''}^\beta a(x,\theta',\theta'')|
\leq C_{\gamma\alpha\beta K} (1+|\theta'|+|\theta''|)^{\mu_1-|\alpha|} (1+|\theta''|)^{\mu_2-|\beta|},
$$
for all $x\in K$, where
$\mu_1=k-\frac{n}{4}+\frac{d_1}{2}$ and $\mu_2 = l+\frac{d_2}{2}$. %

We will often abbreviate
$$
I^p(X;\Lambda) = I^p(\Lambda)\quad\text{and}\quad
I^{m_1,m_2}(X;\Lambda_1,\Lambda_2) = I^{m_1,m_2}(\Lambda_1,\Lambda_2).
$$
Also, if the cotangent bundles of submanifolds $S_j$ of codimension $d_j$, $j=1,2$, are cleanly intersecting, we will use notations
$$
I^\mu(S_1) = I^{\mu+\frac{d_1}{2}-\frac{n}{4}}(N^*S_1)\quad\text{and}\quad
I^{\mu_1,\mu_2}(S_1,S_2) = I^{m_1,m_2}(N^*S_1,N^*S_2),
$$
where $m_1 = \mu_1+\mu_2+d_1/2-n/4$ and $m_2 = -\mu_2+d_2/2$.
It is occasionally useful to use the embedding $I^\mu(\Lambda)\subset H^s({\Nextended})$, that is continuous for all $s<-\mu-\frac{n}{4}$.

\subsubsection{Causal inverse of $\square_{g}+B$}
When $\Lambda_1\subset T^*{\Nextended}$ is a Lagrangian manifold which intersects the characteristic variety of $\square_g$, we can consider solutions $u_1$ of $\square_{g} u_1 + Bu_1 = f_1$, with source $f_1\in I^m(\Lambda_1)$.
As $\Lambda_1\cap \Char(\square_{g})$ is not empty, we find that the wavefront set of $u_1$ is contained in the union of $\Lambda_1$ with the bicharacteristics that contain points of the intersection.

More precisely, %
on globally hyperbolic Lorentzian manifolds the  hyperbolic operator $\square_{g}+\sct$ has a unique causal inverse operator $Q=(\square_{g}+\sct)^{-1}$, see \cite[Theorem 3.2.11]{BGP}.
Denoting the Schwartz kernel of the operator $Q$ again by $Q=Q(x,y)$, we have $Q\in I^{-\frac{3}{2},-\frac{1}{2}}(\Delta^\prime_{T^*{\Nextended}}, \Lambda_{g})$, see \cite{GU1993}.
Here $\Delta^\prime_{T^*{\Nextended}}$ denotes the conormal bundle of the diagonal, $\Delta^\prime_{T^*{\Nextended}}=N^*(\{(x,x)\mid x\in {\Nextended}\})$ and $\Lambda_{g}\subset T^*{\Nextended}\times T^*{\Nextended}$ is the Lagrangian manifold associated to the canonical relation of $\square_g$, given by,
\begin{equation}\label{eq:canonical relation for gpert}
\Lambda_{g} = \{(x,\xi,y,-\eta)\mid (x,\xi)\in\Char(\square_{g}),\,(y,\eta)\in \Theta_{x,\xi} \},
\end{equation}
where $\Theta_{x,\xi}$ is the bicharacteristic of $\square_{g}$ containing $(x,\xi)$.

{\color{black} When $\Lambda_0$ is a Lagrangian manifold  and the intersection $\Lambda_0\cap \Char(\square_{g})$ is transversal, the union $\Lambda_1$ of bicharacteristics intersecting this set} is a Lagrangian manifold.

\begin{lemma}\label{lem: Lagrangian 1 wave} 

Let $n$ be an integer,  ${ s_0}>0$, 
$K=K(x_0,\zeta_0,s_0)$, $\Lambda_1=\Lambda(x_0,\zeta_0,s_0)$ and $\Sigma=
\Sigma({x_0,\zeta_0,s_0})$.
Let  $({ x},\xi)\in  { \Sigma} \cap L^{*}{\Nextended}$, $v=\xi^\sharp\in L_x{\Nextended}$, $r\in \R$ and
$y=\gamma_{x,v}(r)$ and $\eta=(\dot \gamma_{x,v}(r))^\flat$ 
be such that  ${ x}<{ y}$. Assume that ${f_1}\in I^{n+1}({ \Sigma})$ is a compactly supported distribution with a classical symbol.

Then
 $w_1 = (\square_{ {{g}}}+B)^{-1}{f_1}$ satisfies
$w_1\in I^{n-1/2,-1/2}(\Sigma,\Lambda_1)$.

Let  $\sigma^{(p)}_{{f_1}}({ x},\xi)$ be  the
principal symbol of ${f_1}$  at $({ x},\xi)$ and $\sigma^{(p)}_{w_1}({ y},\eta)$ be the principal symbol of $w_1$ at $({ y},\eta)\in  \Lambda_1$.
Then 
\begin{align}\label{eq for principal symbols}
\sigma^{(p)}_{w_1}({ y},\eta)=
R({y},\eta,{ x},\xi)\sigma^{(p)}_{{f_1}}({ x},\xi)
\end{align}
where  $R=R({ y},\eta,{ x},\xi)$ is an invertible linear operator (or,
a non-zero scalar number if Maslov line bundle structure is omitted).
Moreover, the function $R$ is independent of the coefficient $B$.
 
\end{lemma}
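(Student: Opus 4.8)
The plan is to construct $w_1 = (\square_g+B)^{-1}f_1$ by composing the causal inverse with the conormal source, and then to read off both the order and the principal symbol from the paired-Lagrangian calculus, following the scheme of \cite{KLU2018,LUW} while keeping careful track of the dependence on $B$. First I would record the geometry. Since $f_1\in I^{n+1}(\Sigma)$ is conormal with respect to $\Sigma=\Sigma(x_0,\zeta_0,s_0)\subset N^*\{x_0\}$, we have $\WF(f_1)\subset\Sigma$. By the transversality of $\Sigma\cap\Char(\square_g)$ noted just before the lemma, the flow-out $\Lambda_1=\Lambda(x_0,\zeta_0,s_0)$ is a smooth conic Lagrangian manifold, and $\Sigma$ and $\Lambda_1$ intersect cleanly along $\Sigma\cap\Char(\square_g)$, i.e.\ along the light-like covectors $(x_0,r\eta^\flat)$ with $r\neq 0$ and $\eta$ light-like. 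On the globally hyperbolic manifold $\Nextended$ the operator $\square_g+B$ has a unique causal inverse $Q$ \cite{BGP}, and by \cite{GU1993} its Schwartz kernel satisfies $Q\in I^{-3/2,-1/2}(\Delta'_{T^*\Nextended},\Lambda_g)$, where $\Lambda_g$ is the flow-out relation \eqref{eq:canonical relation for gpert}.

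Next I would apply the clean-intersection composition calculus for paired Lagrangian distributions (Melrose--Uhlmann, Greenleaf--Uhlmann; see \cite{GU1993,MU1,GuU1}). Viewing $Q$ as a Fourier integral operator with paired-Lagrangian canonical relation, the diagonal piece $\Delta'$ composes with $\Sigma$ to leave a conormal contribution still carried by $\Sigma$ (this is the action of the pseudodifferential part of $Q$ on $f_1$), while the flow-out piece $\Lambda_g$ propagates $\WF(f_1)\cap\Char(\square_g)$ along the bicharacteristics $\Theta_{x,\xi}$ into $\Lambda_1$. The transversality of $\Sigma\cap\Char(\square_g)$ guarantees that no excess Lagrangian components appear in the composition, and adding the orders with the shifts dictated by the calculus yields $w_1\in I^{n-1/2,-1/2}(\Sigma,\Lambda_1)$. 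In particular, microlocally on $\Lambda_1\setminus\Sigma$ the distribution $w_1$ is an ordinary Lagrangian distribution with a well-defined principal symbol at $(y,\eta)$.

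The principal symbol transport is the heart of the remaining two claims. The symbol $\sigma^{(p)}_{w_1}$ on $\Lambda_1\setminus\Sigma$ solves the first transport equation of $\square_g+B$, a linear homogeneous first-order ODE along the bicharacteristic $\Theta_{x,\xi}$ generated by the Hamilton field $H_p$ of $p(x,\xi)=g^{ij}\xi_i\xi_j$, together with the half-density and Maslov transport. Its solution operator is precisely the map $R(y,\eta,x,\xi)$ of the statement, so the identity $\sigma^{(p)}_{w_1}(y,\eta)=R(y,\eta,x,\xi)\,\sigma^{(p)}_{f_1}(x,\xi)$ follows. Invertibility of $R$ is immediate, since the transport ODE is linear and homogeneous: its solution operator is the exponential of an integral of the subprincipal symbol along $\Theta_{x,\xi}$, composed with the half-density and Maslov transport, and the latter is an isomorphism determined only by the Hamilton flow of $p$; hence $R$ is a nonzero scalar (an isomorphism in the line-bundle picture). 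Finally, and this is the point relevant to later changing the conformal factor, adding the zeroth-order term $B$ to the second-order operator $\square_g$ does not alter its subprincipal symbol, because only the principal symbol $p$ and the subprincipal symbol enter the transport equation; therefore $R$ is independent of $B$.

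The step I expect to require the most care is the composition: verifying the clean-intersection and transversality hypotheses so that the paired-Lagrangian calculus applies verbatim, produces exactly $I^{n-1/2,-1/2}(\Sigma,\Lambda_1)$ with no spurious Lagrangian component, and matches the order shifts in the paper's $I^\mu(S)$ normalization. By contrast, the $B$-independence of $R$ reduces to the elementary observation that a zeroth-order perturbation leaves the subprincipal symbol of a second-order operator unchanged.
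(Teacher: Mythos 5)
Your proposal is correct, and its skeleton coincides with the paper's proof: both arguments rest on the causal inverse having Schwartz kernel $Q\in I^{-3/2,-1/2}(\Delta'_{T^*\Nextended},\Lambda_g)$ and on the clean/transversal-intersection composition calculus of \cite{GU1993} to conclude $w_1=Qf_1\in I^{n-1/2,-1/2}(\Sigma,\Lambda_1)$, with $w_1$ an ordinary Lagrangian distribution on $\Lambda_1$ away from $\pi(\Sigma)$. Where you diverge is in the treatment of the symbol: the paper simply quotes Proposition 2.1 of \cite{GU1993} (using that, by global hyperbolicity, $\Theta_{x,\xi}\cap\Sigma$ is a single point) to write $\sigma^{(p)}_{w_1}(y,\eta)=\sigma(Q)(x,\xi;y,\eta)\,\sigma^{(p)}_{f_1}(x,\xi)$ with $\sigma(Q)$ non-vanishing, and then merely \emph{asserts} that this symbol does not depend on $B$; you instead derive the propagation from the first transport equation along the bicharacteristic, identify $R$ with its (exponential, hence invertible) solution operator composed with the half-density and Maslov transport, and obtain the $B$-independence from the observation that a zeroth-order perturbation of a second-order operator changes neither the principal nor the subprincipal symbol. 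Your route is slightly longer but buys an actual mechanism for the final claim, which is the one point the paper leaves unjustified; conversely, the paper's citation-based route is shorter and hides the transport analysis inside \cite{GU1993}. The only place where you should be a bit more explicit is the matching of initial data at $\Sigma\cap\Char(\square_g)$: the transport equation determines the symbol on $\Lambda_1$ only once one knows that the symbol "emanating" from the intersection is an invertible multiple of $\sigma^{(p)}_{f_1}(x,\xi)$, and that step is exactly the content of the paired-Lagrangian symbol calculus (Proposition 2.1 of \cite{GU1993}) rather than of the transport ODE itself; since you invoke that calculus anyway, this is a presentational rather than a mathematical gap.
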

\begin{proof} The lemma follows from the proof of Lemma 3.1 of \cite{KLU2018}, but we recall the essential arguments of the proof.
Since the Schwartz kernel $Q$ of $(\square_{g} + B)^{-1}$ satisfies $Q \in I^{-\frac{3}{2},-\frac{1}{2}}(\Delta_{T^*{\Nextended}}',\Lambda_{{g}})$, then using $f_1\in I^{n+1}(\Sigma)$ we get $Qf_1\in I^{n-\frac{1}{2},-\frac{1}{2}}(\Sigma,\Lambda_1)$. Let $\pi:T^*\Next\to \Next$ be the projection to the base point of a covector.
Considering  the restriction of $w=Qf_1$ in the set  ${\Nextended}\setminus \pi(\Sigma)$, then using the formula (1.4) of \cite{GU1993}, we see that $w|_{{\Nextended}\setminus  \pi(\Sigma)}\in I^{n-\frac{1}{2}}(\Lambda_1)$.

In our case, because the manifold is globally hyperbolic, $\Theta_{x,\xi}\cap \Sigma$ contains only a single point and the formula \eqref{eq for principal symbols} for principal symbols follows from {\color{black} \cite{GU1993}},  Proposition 2.1 and is given precisely by
$$
\sigma^{(p)}_{w_1}({ y},\eta)=
\sigma(Q)(x,\xi;y,\eta)\sigma^{(p)}_{{f_1}}({x},\xi),
$$
where $(x,\xi)\in \Theta_{x,\xi}\cap \Sigma\subset T^*{\Nextended}$.
The function $\sigma(Q)$ does not vanish and one can consider it as a non-zero scalar. Finally,   the principal symbol $\sigma^{(p)}_{w_1}$
does not depend on the coefficient $B$, and thus $R$ is independent of $B$. 
\end{proof}

\begin{figure}[ht!]
\centering

\hspace{-40mm}
\includegraphics[height=2.5cm]{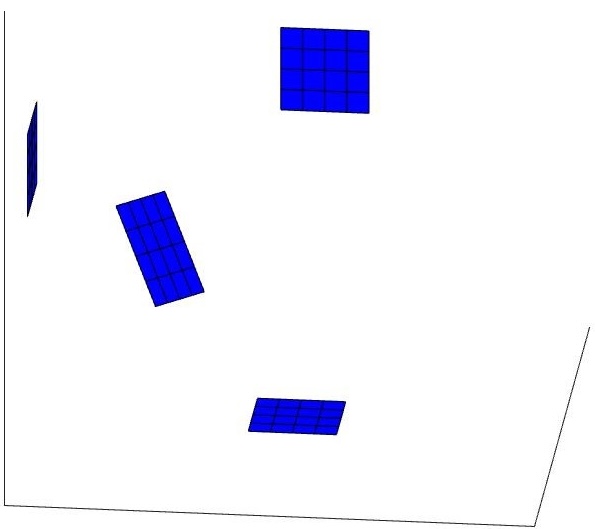}\hspace{2mm}
\includegraphics[height=2.5cm]{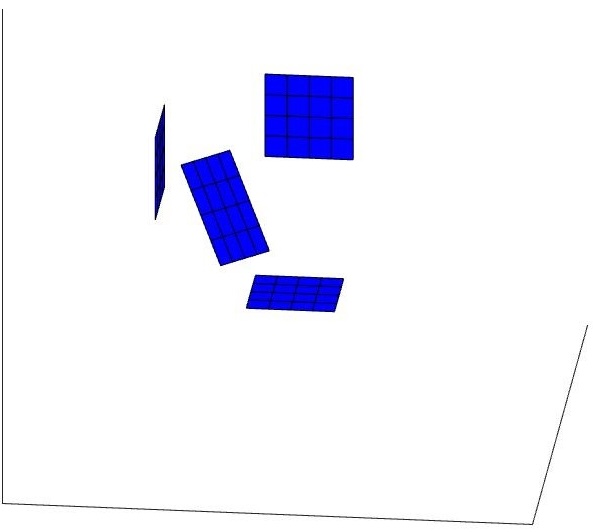}\hspace{2mm}
\includegraphics[height=2.5cm]{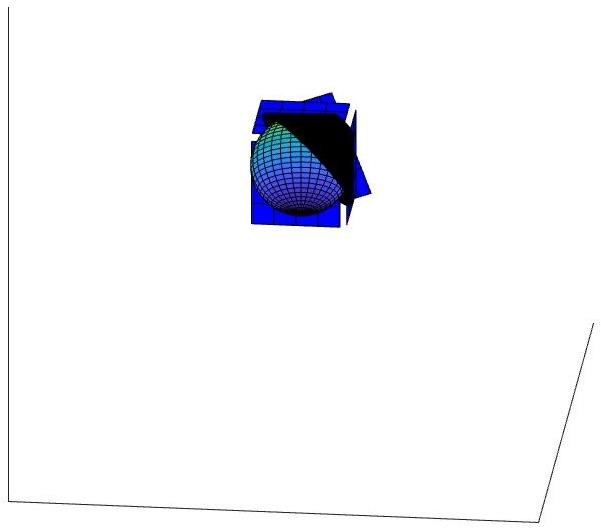}\hspace{2mm}
\includegraphics[height=2.5cm]{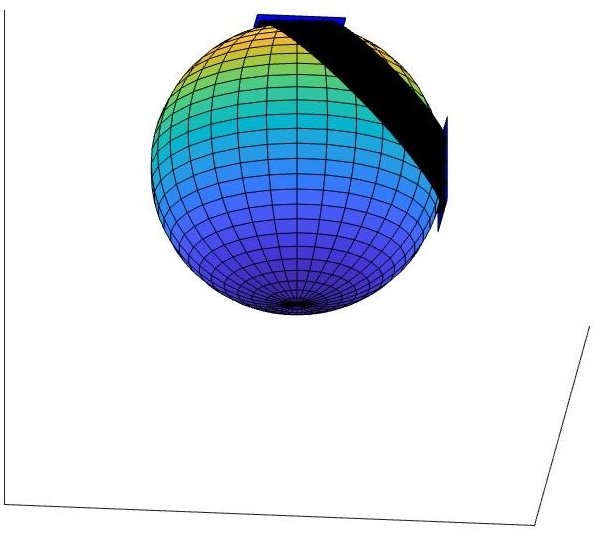}\hspace{-40mm}

\caption{
The directed conormal sources $\e_jf_j$, supported in $N_-$ near the past light-like infinity $\mathcal I^-$,
produce distorted plane waves $u_j$. The left and the center left figures show,
at  the times $t_1$ and $t_2$  the singular supports of such waves. 
When the four waves have
non-linear interaction, and produce a wave 
$\U^{({\kappa})}$, see formula \eqref{eq:4th_order_solutions},
that is similar to 
a spherical wave produced by a point source. The black, conic waves
are produced by 3-interactions.
The center right and the right figures show,
at  the times $t_3$ and $t_4$,  the singular supports the distorted plane waves  and 
the interaction waves.
Using scattering functionals, on can detect the interaction waves
at the future light-like infinity $\mathcal I^+$ and use these data to construct the conformal class of the metric $g$
near  $\mathcal I^+\cup  \mathcal I^-\cup i_0$.
}\label{fig: interaction of waves}
\end{figure}

\subsubsection{ $\kappa$th order interactions}
Let $x_j \in \Wminus$ and $\zeta_j\in L_{x_j}{\Nextended}$, {\color{black} $j=1,2,3,4$, and consider the sources
\beq\label{conormal souces 1}
f_j\in I^{n+1}(\Sigma(x_j,\zeta_j,s_0)),
\eeq
$n\in \R$, $n<-6$. Let $V=\bigcup_{j=1}^4\supp(f_j)$ and assume that $V\subset \Wminus$ and define $\vec{\e}=(\e_1,\e_2,\e_3,\e_4)$ and 
\beq\label{eq: epsilon source}
f_{\vec{\e}}=\sum_{j=1}^4\e_jf_j.
\eeq
 Let $u_{\vec\e}$
 be the solution of 
 \begin{equation}\label{eq:nonlinear wave equation in Penrose 3}
\begin{cases}
&(\square_{\gext}+B)u_{\vec\e} + Au_{\vec\e}^{\kappa}=f_{\vec{\e}},\quad
\text{in } I^-_\Next(p_+),\\
&\supp( u_{\vec\e})\subset J^+(\supp(f)).
\end{cases}
\end{equation}
 }
Moreover, assume {\color{black} that
\begin{equation}\label{eq:source causality}
\supp(f_j)\cap J^+(\supp(f_k)) = \emptyset,\quad\text{for all  $j\neq k$ and $\supp(f_j)\subset  
 \Wminus$},
\end{equation}
so} that the supports of the sources are causally independent.

These sources give rise to the solutions of the linearized wave equation, which we denote by
\begin{equation}\label{eq:linearized_solutions}
u_j := \partial_{\e_j} u_\vece|_{\vece=0} = (\square_{g}+\sct)^{-1}f_j \in
 I({\Nextended}\setminus\{ x_j \}; \Lambda(x_j,\zeta_j,s_0)).
\end{equation}
We will use the following abbreviations: $\partial_\vece^1u_\vece|_{\vece=0} := \p_{\e_1}u_\vece|_{\vece=0}$, $\p_\vece^2u_\vece|_{\vece=0} := \p_{\e_1}\p_{\e_2}u_\vece|_{\vece=0}$, $\partial_\vece^3u_\vece|_{\vece=0} := \p_{\e_1}\p_{\e_2}\p_{\e_3}u_\vece|_{\vece=0}$ and
\[
\p_\vece^4 u_\vece|_{\vece=0} := \p_{\e_1}\p_{\e_2}\p_{\e_3}\p_{\e_4}u_\vece|_{\vece=0}
\]
 and
\beq\label{Dkappa}
D^{\kappa} u_\vece|_{\vece=0} := \p_{\e_1}\p_{\e_2}\p_{\e_3}\p_{\e_4}^{{\kappa}-3}u_\vece|_{\vece=0}.
\eeq

The result of the fourth-order interactions produced by the waves $u_j$ for the non-linear wave equation,
see Figure \ref{fig: interaction of waves}, will be denoted by
\begin{equation}\label{eq:4th_order_solutions}
{\color{black}\U^{({\kappa})}} :={\color{black}D^{\kappa}} u_\vece|_{\vece=0} = (\square_{{g}}+\sct)^{-1} \Source,
\end{equation}
where
\begin{align}\label{eq:4th_order_source_S}
\Source &:={\color{black} -{\kappa} !\cdot A u_1u_2u_3(u_4)^{\kappa-3}.}
\end{align}

Let now $v_{\vec\eps}$ be a solution to $\square_{g} v_{\vec\eps} + Bv_{\vec\eps} = f_{\vec\eps}$, where $f_{\vec\eps}=\sum_{i=1}^4 \eps_i f_i$.
By linearity $v_{\vec\eps}=\sum_{i=1}^4 \eps_iu_i$, where $\square_{g} u_i+Bu_i = f_i$.
This also implies that for sufficiently small $\vec \e$, 
$$
\Vert u_{\vec\eps}\Vert_{H^\muutos{\tau}({\Nextended})} \leq C\sum_{i=1}^4 \eps_i \Vert f_i\Vert_{H^\muutos{\tau}({\Nextended})},
$$
where $\tau>-n-2\ge 4.$
Now $\square_{g} (u_{\vec\eps}-v_{\vec\eps}) + B(u_{\vec\eps}-v_{\vec\eps}) + A
{\color{black} u^{{\kappa}}_{\vec\eps}} = 0$, which yields
\begin{equation}\label{eq:equation for u}
u_{\vec\eps}=v_{\vec\eps}-Q(A{\color{black} u^{{\kappa}}_{\vec\eps}}),
\end{equation}
where $Q$ is the causal inverse of $\square_{g} + B$.

By a direct calculation, one sees that
$$
{\color{black} u^{{\kappa}}_{\vec\eps}} = (v_{\vec\eps}-Q(A{u^{{\kappa}}_{\vec\eps}))^{{\kappa}}} ={\color{black} v^{{\kappa}}_{\vec\eps}} + R,
$$
where $R$ contains all terms which are $O(\eps_1^{k_{1}}\eps_2^{k_{2}}\eps_3^{k_{3}}\eps_4^{k_{4}})$, $k_1+k_2+k_3+k_4>\kappa$ in the space
$H^\muutos{\tau}({\Nextended})$. %
Furthermore, substituting this back to \eqref{eq:equation for u} yields
$$
u_{\vec\eps} = v_{\vec\eps}-Q(A{\color{black} v^{{k}}_{\vec\eps}}) + R.
$$
When we compute the  derivatives of this equation with respect to $\e_j$:s, we obtain, see \cite{LLPT2} for the details on computing the derivatives \eqref{eq:4th_order_solutions}.
This shows that we only need to consider these interactions
{\color{black} of order ${{\kappa}}$}.
However, we need to keep in mind also regions where only three waves interact, since there can be problems in the symbol calculus in this case.

\begin{definition}\label{def:regular intersection}
The geodesics corresponding to $(\vec x,\vec\xi)=((x_j,\xi_j))_{j=1}^4$ intersect and the intersection takes place at $q\in {\Nextended}$ if there are $t_j>0$ such that $q=\gamma_{x_j,\xi_j}(t_j)$ for all $j=1,2,3,4$.
The intersection is \emph{regular} if $t_j \in (0,\rho(x_j,\xi_j))$ and the vectors $\dot\gamma_{x_j,\xi_j}(t_j)\in T_q{\Nextended}$, $j=1,2,3,4$, are linearly independent.
\end{definition}
Now, for $q\in {\Nextended}$ let ${\color{black} \Lambda_{\text{int}}}$ be the Lagrangian manifold
\begin{equation}\label{eq:set_Lambda_q}
\begin{split}
{\color{black} \Lambda_{\text{int}}} := \{
(y,\eta)\in T^*{\Nextended}\mid y&=\gamma_{q,\zeta}(1),\,\eta^\sharp = r\dot\gamma_{q,\zeta}(1),
\
\zeta\in L_q^+{\Nextended},\, r\in \R\setminus\{0\}
\}.
\end{split}
\end{equation}
Then the projection $\pi({\color{black} \Lambda_{\text{int}}})$ of ${\color{black} \Lambda_{\text{int}}}$ on ${\Nextended}$ is the light-cone $\mathcal L^+(q)\subset \Next$ emanating from $q$.
Let us take four points satisfying
\begin{equation}\label{eq:causally unrelated points}
x_j\in \Wminus\quad \hbox{and}\quad x_j\not\in J^+(x_k),\text{ for } j\neq k,\quad j,k=1,2,3,4.
\end{equation}
Let $\xi_j\in L_{x_j}^+{\Nextended}$ and denote $(\vec x,\vec\xi):=(x_j,\xi_j)_{j=1}^4$.
Let
\begin{equation}\label{eq:good set N}
\goodN(\vec x,\vec\xi) := {\Nextended} \setminus \cup_{j=1}^4 J^+(\gamma_{x_j,\zeta_j}(\mathbf{t}_j)),\quad
\text{where } \mathbf{t}_j:= \rho(x_j,\zeta_j).
\end{equation}
Denote also
\begin{equation}\label{eq:K_j and Lambda_j}
K_j {\color{black}(s_0)}:= K(x_j,\xi_j,s_0)=\pi(\Lambda(x_j,\xi_j,s_0)),\quad \Lambda_j := \Lambda(x_j,\xi_j,s_0)
\end{equation}
similarly to  \eqref{eq:set_K} and \eqref{eq:sets_Sigma_Lambda}.
Here we choose $s_0>0$ so small that either
\begin{equation}\label{A}
 {\color{black}K_{1234}=\bigcap_{s_0>0}}\left(\bigcap_{j=1}^4K_j {\color{black}(s_0)}\right)\cap \goodN(\vec x,\vec\xi)
 =\emptyset,\tag{A}
\end{equation}
or
\begin{equation}\label{B}
\begin{split}
& {\color{black}K_{1234}=\bigcap_{s_0>0}}\left(\bigcap_{j=1}^4K_j {\color{black}(s_0)}\right)\cap \goodN(\vec x,\vec\xi),\\
&{\color{black} q = \gamma_{x_j,\xi_j}(t_j)\in K_{1234}},\quad t_j>0 \text{ for all } j=1,2,3,4.
\end{split}
\tag{B}
\end{equation}
Moreover, let us define condition (T) as
\begin{equation}\label{T}
\begin{split}
&\text{the condition $(B)$ is valid with $q\in K_{1234}$}\\
&\text{and there exists } b_j=\dot \gamma_{x_j,\xi_j}(t_j)\in N_qK_j\setminus\{0\}\\
&\text{such that } b_j,\, j=1,2,3,4 \text{ are linearly independent.}
\end{split}
\tag{T}
\end{equation}

Roughly speaking, in case \eqref{A} the geodesics $\gamma_{x_j,\xi_j}$, $j=1,2,3,4$ do not intersect in $\goodN(\vec x,\vec\xi)$ and in \eqref{B} they intersect at a single point $\{q\}\in\goodN$.
We will use distorted plane-waves that propagate on the surfaces $K_j$. Due to the {\color{black}
$\kappa$:th order non-linearity with $\kappa\ge 4$}, we avoid dealing with the $3$-wave interactions of waves as they disappear in the linearized equations. However, while we do not have singularities produced by three waves, the sets where these $3$-wave singularities would propagate can cause some problems.

Due to this, we define the following sets analogously to \cite{KLU2018}.

Let $\X((\vec x,\vec\xi),s_0)\subset L^*{\Nextended}$ be the set of all light-like co-vectors $(x,\xi)$ belonging to the conormal bundles $N^*(K_{j_1} {\color{black}(s_0)}\cap K_{j_2} {\color{black}(s_0)} \cap K_{j_3} {\color{black}(s_0)})$ with $1\leq j_1 < j_2 < j_3 \leq 4$. More precisely, let $K_{j_1,j_2,j_3}(s_0)=K_{j_1}(s_0)\cap K_{j_2}(s_0)\cap K_{j_3}(s_0)$ and 
\begin{equation}\label{X-sets}
\begin{split}
&\X_{j_1j_2j_3}((\vec x,\vec\xi),s_0) =N^*K_{j_1,j_2,j_3}(s_0)\cap L^*{\Nextended}.
\end{split}
\end{equation}
Observe that $K_{j_1,j_2,j_3}(s_0)\cap \goodN(\vec x,\vec\xi)  $
is a smooth surface whose Hausdorff dimension  is $(3+1)-3=1$. 
For each $x\in K_{j_1,j_2,j_3}(s_0)\cap \goodN(\vec x,\vec\xi) $, the set $N_x^*K_{j_1,j_2,j_3}(s_0)\cap L^*{\Nextended}$ 
is of Hausdorff dimension $2$.
At the limit
$s_0\to 0$ the sets $K_j(s_0)$ tend towards the light-like geodesics $\gamma_{x_j,\xi_j}$.
Thus the submanifold
$\X_{j_1j_2j_3}((\vec x,\vec\xi),s_0)\cap T^*\goodN(\vec x,\vec\xi) \subset T^*M$ has Hausdorff dimension  $3$ and when $s_0\to 0$
these submanifolds converge to a submanifold $\X_{j_1j_2j_3}(\vec x,\vec\xi)\cap T^*\goodN(\vec x,\vec\xi) \subset T^*M$ of Hausdorff dimension  $2$.

Recall that $\Theta_{x,\xi}$ is the bicharacteristic of $\square_{g}$ containing $(x,\xi)$.
To define the sets of three wave interactions, we define
\begin{equation*}
    \begin{split}
\H_{j_1j_2j_3}((\vec x,\vec\xi),s_0) &=
\{ (y,\eta)\in T^*{\Nextended}\mid \text{there is } (x,\zeta)\in \X_{j_1j_2j_3}((\vec x,\vec\xi),s_0)\\
&\qquad \text{ such that } x\leq y \text{ and } (y,\eta)\in\Theta_{x,\zeta}\}.
\end{split}
\end{equation*}
Roughly speaking, the sets $\X_{j_1j_2j_3}$ are the light-like directions related to the three wave interactions and $\H_{j_1j_2j_3}$ are  the sets to which $\X_{j_1j_2j_3}$ flow under the bicharacteristic flow.
We also denote
\begin{equation}\label{XX sets}
\begin{split}
\H(\vec x,\vec\xi)& = \bigcap_{s_0>0}\bigg( \bigcup_{1\leq j_1 <j_2<j_3\leq 4} \H_{j_1j_2j_3}((\vec x,\vec\xi),s_0)\bigg).
\end{split}
\end{equation}
We use the above  sets to discard the possible singularities produced by three waves interactions.
Moreover, these sets allow us to define $\Y((\vec x,\vec\xi),s_0) = \pi(\H((\vec x,\vec\xi),s_0))$, where $\pi : T^*{\Nextended}\to {\Nextended}$ is the projection to the base space on the cotangent bundle.
The limiting spaces are defined as
\begin{equation}\label{XYH sets}
\begin{split}
&\Y(\vec x,\vec\xi) = \bigcap_{s_0>0}\Y((\vec x,\vec\xi),s_0),
\qquad \X(\vec x,\vec\xi) = \bigcap_{s_0>0}\Y((\vec x,\vec\xi),s_0).
\end{split}
\end{equation}
We call the sets $\Y(\vec x,\vec\xi)$ the exceptional sets of 3-wave interactions.

{Recall that the  submanifold $\X_{j_1j_2j_3}((\vec x,\vec\xi))\cap T^*\goodN(\vec x,\vec\xi) \subset T^*M$ has the Hausdorff dimension  $2$.
The main point of these sets is that at the limit $s_0\to 0$ the sets $\Y((\vec x,\vec\xi),s_0)\cap \goodN(\vec x,\vec\xi) \subset T^*M$ tend to the set $\Y((\vec x,\vec\xi))\cap \goodN(\vec x,\vec\xi) \subset T^*M$, whose Hausdorff dimension is at most $2$ and  the sets $\H((\vec x,\vec\xi),s_0)\cap T^*\goodN(\vec x,\vec\xi) \subset T^*M$ tend to the set $\H((\vec x,\vec\xi))\cap T^*\goodN(\vec x,\vec\xi) \subset T^*M$, whose Hausdorff dimension is at most $3$.
Later, these sets can be discarded in the reconstruction procedure.} We will not analyze what happens on these exceptional sets. We refer the reader to \cite{FLO} for a study where the three wave interactions are used also to prove uniqueness results for inverse problems.

We will also need small neighbourhoods of the sets $\Lambda_j$.
Recall that on the manifold ${\Nextended}$ we have an auxiliary Riemannian metric $g^+$.
Using $g^+$ we may define the unit cotangent bundle $S^*{\Nextended}$, which further allows us to consider conic $\eps$-neighbourhoods $\Gamma_j(\eps)$ of $\Lambda_j\cap S^*{\Nextended}$.
Let us denote
\begin{equation}\label{eq:gamma_ngbh}
\begin{split}
\tilde\Gamma(\eps)&:=
\left(
\bigcup_{j<k<l} (\Gamma_j(\eps) + \Gamma_k(\eps) + \Gamma_l(\eps))
\right)\\
&\qquad\cup
\left(
\bigcup_{j<k} (\Gamma_j(\eps) + \Gamma_k(\eps))
\right)
\cup
\left(
\bigcup_{j=1}^4 \Gamma_j(\eps)
\right).
\end{split}
\end{equation}
This set contains $\X((\vec x,\vec\xi))$.
Moreover, let
\begin{equation}\label{eq:H_ngbh}
\H(\eps) := \Lambda_{g}'\circ (\tilde\Gamma(\eps) \cap \mathrm{char}(\square_{g}))
\end{equation}
be the Hamiltonian flow-out of $\tilde\Gamma(\eps)$ (given by the canonical relation of $\square_{g}$).
Then $\H(\eps)$ is a neighbourhood of $\H((\vec x,\vec\xi))$.

Next we use a generalization of the analysis obtained in \cite{KLU2018} and \cite{LUW} 

First, to analyze the $\kappa$-th order non-linearity, we use the following result:

\begin{lemma}\label{lem: higher power}

Let $K\subset {\Nextended}$ be a codimension one submanifold. Let $u_j\in I^{\mu_j}(K)$, $j=1,2,\dots,J$,
 $\mu_j<-\frac{3}{2}$. Then  $v=\prod_{j=1}^J u_j(x)$ is a well-defined distribution in $I^{\nu}(K)$ with $\nu=\frac 32(J-1) +\sum_{j=1}^J \mu_j$. 
 Moreover, the principal symbol satisfies
\beq
\sigma(v) = (2\pi)^{-J/2}\sigma(u_1)\ast\sigma(u_2)\ast \dots \ast\sigma(u_J) 
\eeq
where  the convolution is over the fiber variable of $N^*K$, i.e.,
\begin{equation}
\begin{split}
&\sigma(v)(x, \xi) = \\
& \int_{\R^{J-1}} 
\sigma(u_J) (x,\xi-\sum_{j=1}^{J-1}\eta_j)\cdot \prod_{j=1}^{J-1} \sigma(u_j)(x,\eta_j)\,
d\eta_1\dots d\eta_{J-1}
\end{split}
\end{equation}
where $(x, \xi) \in K\times (\R\setminus 0)$ and we identify $K\times (\R\setminus 0)$
with $N^*K$.
Moreover,
the product $v$ satisfies $\WF(v)\subset N^*K $.
\end{lemma}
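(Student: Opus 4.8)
The plan is to work entirely in local coordinates adapted to $K$ and to reduce the product to a single oscillatory integral whose amplitude is an iterated fibre convolution of the amplitudes of the factors. First I would choose coordinates $x=(x_1,x'')$, $x''\in\R^{n-1}$, in which $K=\{x_1=0\}$, so that $N^*K$ has one-dimensional fibres and each factor can be written, modulo a $C^\infty$ function, as a conormal oscillatory integral $u_j(x)=\int_\R e^{ix_1\theta_j}a_j(x,\theta_j)\,d\theta_j$, where $a_j$ is a classical symbol in the single fibre variable $\theta_j$ of the order dictated by $\mu_j$. Multiplying the $J$ representations and using that the phases simply add gives $v(x)=\int_{\R^J}e^{ix_1(\theta_1+\dots+\theta_J)}\prod_{j=1}^J a_j(x,\theta_j)\,d\theta$. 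Substituting $\xi=\theta_1+\dots+\theta_J$ and keeping $\theta_1,\dots,\theta_{J-1}$ as the remaining integration variables puts $v$ in the conormal form $v(x)=\int_\R e^{ix_1\xi}b(x,\xi)\,d\xi$, with $b(x,\xi)$ equal to the iterated fibre convolution displayed in the statement.

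The central analytic step is to show that $b$ is a well-defined classical symbol of the asserted order, from which both $v\in I^\nu(K)$ and $\WF(v)\subset N^*K$ follow at once — the latter because $v$ is exhibited as a single oscillatory integral with phase $x_1\xi$ conormal to $K$. Here the hypothesis $\mu_j<-\tfrac32$ is exactly what is needed. Tracking the fibre orders, in each integration variable the integrand is dominated by a product of two fibre symbols whose combined order is $<-1$, so each convolution converges absolutely; a global scaling estimate in $(\theta_1,\dots,\theta_{J-1})$ gives the same conclusion and shows that every derivative $\partial_x^\gamma\partial_\xi^k b$ obeys the symbol bound of the claimed order, after splitting the integration domain according to whether all $\theta_j$ are comparable to $\xi$ or some single $\theta_j$ is large. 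I would organise the bookkeeping as an induction on $J$: the two-factor case is the base step, and since $\mu_j<-\tfrac32$ forces every partial product to again have order $<-\tfrac32$, the inductive hypothesis applies to $v_{J-1}\cdot u_J$ and the orders add with the correction per convolution that produces $\nu=\tfrac32(J-1)+\sum_j\mu_j$.

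The one genuinely delicate point — and the step I expect to be the main obstacle — is that this product is \emph{not} covered by H\"ormander's criterion for multiplying distributions: since generic conormal distributions to $K$ have wavefront set equal to all of $N^*K\setminus\{0\}$, both the positive and the negative conormal codirections occur, and the fibrewise sum of wavefront sets meets the zero section. Thus $v$ cannot be defined by the standard transversality argument, and one must rely on the explicit convergent representation above, the sufficiently negative orders $\mu_j<-\tfrac32$ playing the role of the missing transversality. To make the definition rigorous I would approximate each factor by smoothing its symbol, $a_j^{\varepsilon}=\chi(\varepsilon\theta_j)a_j$, note that the corresponding $v^{\varepsilon}$ are ordinary products of functions that are smooth away from $K$, and then use the convolution estimates to show that $b^{\varepsilon}\to b$ in the symbol topology one order higher, so that $v^{\varepsilon}\to v$ in $\mathcal D'$ and the limit is independent of the regularisation. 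This identifies $v$ with the pointwise product wherever the latter already makes sense and fixes it as a conormal distribution everywhere.

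Finally, the principal symbol formula is read off from the leading homogeneous parts: the leading part of $b$ is the fibre convolution of the leading parts of the $a_j$, while all lower-order contributions from the symbol expansions land in $I^{\nu-1}(K)$, and the Fourier normalisation constants accumulate to the factor $(2\pi)^{-J/2}$. Identifying the fibre variable of $N^*K$ with $K\times(\R\setminus\{0\})$ and matching the symbol conventions with those of Greenleaf--Uhlmann (see \cite{GU1993}) then yields the stated convolution expression for $\sigma(v)$, completing the argument.
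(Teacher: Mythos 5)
You are proving this lemma from scratch, whereas the paper's entire proof is the single sentence ``iterate Lemma 5.1 of \cite{LUW}''; so your route is necessarily different, and its skeleton --- coordinates flattening $K$, the exact identity expressing the fibre amplitude of the product as the iterated convolution of the fibre amplitudes, and the wavefront containment read off from the resulting oscillatory representation --- is the natural one. Your observation that H\"ormander's wavefront criterion cannot define this product is also correct (though the fix is simpler than your regularisation: for $\mu_j<-\frac32$ each amplitude is integrable in the fibre variable, so every $u_j$ is a bounded continuous function and the product exists pointwise). The genuine gap is the central estimate. Under the paper's conventions ($n=4$, codimension one, so $u\in I^{\mu}(K)$ means fibre amplitude in $S^{\mu}$), the hypothesis $\mu_j<-\frac32$ puts you in the regime where the iterated convolution does \emph{not} have order $\nu=\frac32(J-1)+\sum_j\mu_j$. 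In your own splitting of the $\theta$-integral, the region where all $\theta_j$ are comparable to $\xi$ contributes $O\bigl((1+|\xi|)^{\sum_j\mu_j+J-1}\bigr)$, which is consistent with the claimed bound; but the region where all variables except one stay bounded contributes, for each $j$,
\[
\Bigl(\prod_{k\neq j}\int_{\R} a_k(x,\theta)\,d\theta\Bigr)\,a_j(x,\xi)\;+\;O\bigl((1+|\xi|)^{\mu_j-1}\bigr),
\]
a term of exact order $\mu_j$. Since $\mu_k<-\frac32$ for the other $J-1$ indices forces $\mu_j>\nu$, this term violates the claimed symbol bound, and no estimate can remove it: in coordinates where $K=\{x_1=0\}$, taking $u_1=u_2=e^{-|x_1|}=\int_\R e^{ix_1\theta}\frac{d\theta}{\pi(1+\theta^2)}\in I^{-2}(K)$ gives $u_1u_2=e^{-2|x_1|}$, whose fibre amplitude decays exactly like $(1+|\xi|)^{-2}$, so the product lies in $I^{-2}(K)$ but not in $I^{\nu}(K)$ with $\nu=-\frac52$. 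In this regime the product of continuous conormal factors is only as singular as its worst factor: the order is $\max_j\mu_j$, with leading behaviour $\sum_j\bigl(\prod_{k\neq j}u_k|_K\bigr)\sigma(u_j)$; the additive formula you are trying to prove is the one valid in the opposite regime of non-integrable amplitudes (fibre order $>-1$), where the scaling region dominates.

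The same regime mismatch breaks your last step: for homogeneity degrees $<-1$ the ``fibre convolution of the leading homogeneous parts'' diverges at the endpoints $\eta_j=0$ and $\eta_j=\xi$, so the principal symbol cannot be read off that way, and the induction cannot repair anything because its base case $J=2$ is already the failing case. (A smaller point: with the representation $u_j=\int e^{ix_1\theta}a_j\,d\theta$, the normalisation constant multiplying the convolution is $(2\pi)^{-(J-1)}$, not $(2\pi)^{-J/2}$; the two agree only for $J=2$.) To be fair, this difficulty is built into the statement you were asked to prove --- the hypothesis $\mu_j<-\frac32$ and the conclusion $\nu=\frac32(J-1)+\sum_j\mu_j$ belong to complementary regimes --- and the paper never meets it because it imports the two-factor case as a black box from \cite{LUW} and iterates. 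A blind direct proof, however, must meet it, and yours asserts exactly the estimate that fails; a correct self-contained argument would have to either weaken the conclusion to $v\in I^{\max_j\mu_j}(K)$, keeping the exact convolution identity for the full amplitude (which is the part of your argument that is right), or alter the hypotheses so that the scaling region genuinely dominates.
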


\begin{proof} The proof follows by iterating Lemma 5.1 in  \cite{LUW}.

\end{proof}

  \begin{lemma}
\label{thm:analytic limits A wave A and B} 
  Let $(\vec x,\vec \xi)=((x_j,\xi_j))_{j=1}^4$  be future pointing light-like vectors  
  such that \eqref{eq:causally unrelated points} is satisfied. 
  Assume also that $s_0>0,K_j,\Lambda_j$ 
are as in \eqref{eq:K_j and Lambda_j}. 
   Let ${ y_0}\in  \goodN(\vec x,\vec \xi)\cap \Wplus$,
see \eqref{eq:good set N}.
    
Let  $n\in \Z_+$  and
${f}_{j}\in \mathcal I^{-n+1}({ \Sigma}(x_j,\zeta_j,{ s_0}))$, $j=1,2,3,4$, 
be sources satisfying \eqref{eq:source causality} and
$u_j= (\square_{g}+\sct)^{-1}f_j$ and $\U^{({{\kappa}})}$ be 
 the wave produced by the  ${{\kappa}}$:th order interaction
given in \eqref{eq:4th_order_solutions}. 
When $n$ is large enough, $s_0$ is small enough,
the following claims hold:

(a) 
When the above condition \eqref{A} is satisfied, {then  $(y_0,\zeta_0)\not \in WF(\U^{({{\kappa}})})$.
Moreover, if  $y_0 \not\in \bigcup_{j=1}^4 \gamma_{x_j,\xi_j}([0,\infty))\cup \Y((\vec x,\vec\xi),s_0)$,
see \eqref{eq:good set N} and \eqref{XYH sets},  the point $y_0$ has a neighborhood $U\subset \Wplus$ such that  $\U^{({{\kappa}})}|_U$ is $C^\infty$-smooth.}

(b)
When the above conditions \eqref{B} and \eqref{T} are satisfied,  the following holds:
\begin{enumerate}[(i)]
\item\label{item:y not in J implies smooth}  If $ y_0\not \in {\mathcal L^+(q)},$
 then ${ y_0}$ has a neighborhood $V$ such that $\U^{({{\kappa}})}|_V$ is $C^\infty$-smooth.
\item\label{item:y in J implies Lagrangian} Assume that $y_0\in  {\mathcal L^+(q)} \setminus \Y((\vec x,\vec\xi),s_0)$.
Also, assume that
$w_0\in L^{*}_{y_0}{\Nextended}$ and $r\in \R$ are such that
 $\gamma_{y_0,w_0}(r)=q$ and denote $ \eta=(\dot \gamma_{y_0,w_0^\sharp}(r))^\flat\in  {\color{black}\Lambda_{1234}}$. 
Then the point $y_0$ has a neighbourhood $V$ such that $\U^{({{\kappa}})}$ in $V$ is a Lagrangian distribution, $\U^{({{\kappa}})}\big|_{V}\in I^{-4n-\frac{1}{2}}({\color{black} \Lambda_{\text{int}}})$.

Moreover, $\eta$ can be written as $\eta=\sum_{j=1}^4 \zeta_j$, where $\zeta_j\in N^*_qK_j$ are linearly independent.
Then the principal symbol of $\U^{({{\kappa}})}|_{V}\in I^{-4n-\frac{1}{2}}( {\color{black}\Lambda_{1234}})$, at the point  $({ y_0},{ w_0})$, {\color{black} is  
\begin{multline}\label{principal symbol of cal U}
\sigma^{p}_{\U^{({{\kappa}})}}(y_0,w_0) =
-{{\kappa}} !(2\pi)^{-3}R(y_0,w_0,q,\eta)\cdot A(q)(\prod_{j=1}^3\sigma^{p}_{u_j}(q,\zeta_j))\cdot \sigma^{p}_{v}(q,\zeta_4)),
\end{multline}
where $v=u_4^{{{\kappa}}-3}$ and} ${\vec \zeta}=(\zeta_j)_{j=1}^4$ and $R({ y_0},{ w_0},q,{  \eta} )$ is given Lemma~\ref{lem: Lagrangian 1 wave}.
\end{enumerate}
\end{lemma}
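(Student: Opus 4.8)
The plan is to follow the microlocal strategy of \cite{KLU2018} and \cite{LUW}, adapted to the $\kappa$-th order nonlinearity, and to carry out every step through the causal inverse $Q=(\square_g+B)^{-1}$ together with Lemma~\ref{lem: Lagrangian 1 wave}. By that lemma, away from the source supports each linearized wave $u_j=Qf_j$ is a conormal distribution $u_j\in I^{-n-1/2}(\Lambda_j)$ carried on the surface $K_j$, and by Lemma~\ref{lem: higher power} the power $v=u_4^{\kappa-3}$ is conormal to $K_4$. Since $\U^{(\kappa)}=Q\Source$ with $\Source=-\kappa!\,A\,u_1u_2u_3v$, see \eqref{eq:4th_order_solutions}--\eqref{eq:4th_order_source_S}, the whole problem reduces to two tasks: first, analyzing $\WF(\Source)$ produced by multiplying these conormal factors, and second, propagating the result by $Q$ using the canonical relation $\Lambda_g$ of \eqref{eq:canonical relation for gpert}.

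First I would record the wavefront set of the product. Off the source supports and inside $\goodN(\vec x,\vec\xi)$, the set $\WF(u_1u_2u_3v)$ is contained in the union of the single sheets $\Lambda_j$, the pairwise and triple sum sets collected in $\tilde\Gamma(\eps)$ (see \eqref{eq:gamma_ngbh}), together with the point-conormal directions arising at a common intersection point of all four surfaces $K_j$. Applying $Q$ spreads these along bicharacteristics, so $\WF(\U^{(\kappa)})$ is contained in $\tilde\Gamma(\eps)\cup\H(\eps)$, with $\H(\eps)$ the Hamiltonian flow-out in \eqref{eq:H_ngbh}, plus the flow-out of any point-conormal contribution. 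Under condition \eqref{A} there is no common intersection point of the four geodesics in $\goodN$, so no point-conormal contribution occurs; hence $(y_0,\zeta_0)\notin\WF(\U^{(\kappa)})$, and if in addition $y_0$ avoids the geodesics $\gamma_{x_j,\xi_j}$ and the exceptional set $\Y((\vec x,\vec\xi),s_0)$, then for $s_0$ small $\tilde\Gamma(\eps)\cup\H(\eps)$ misses $T^*_{y_0}\Nextended$, giving smoothness of $\U^{(\kappa)}$ near $y_0$. This proves (a), and the same flow-out description gives (b)(i): when \eqref{B} holds the only additional singularities live on $\pi(\Lambda_{\text{int}})=\mathcal L^+(q)$, so for $y_0\notin\mathcal L^+(q)$ and off $\Y$ the wave is smooth.

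The core is (b)(ii). Under \eqref{T} the vectors $b_j=\dot\gamma_{x_j,\xi_j}(t_j)$ are linearly independent, so the conormals $\zeta_j\in N^*_qK_j$ span $T^*_q\Nextended$ and the four hypersurfaces $K_j$ meet transversally at $q$ with $K_1\cap K_2\cap K_3\cap K_4=\{q\}$ locally. By the transversal product calculus for conormal distributions (iterating the product theorem of \cite{GU1993} together with Lemma~\ref{lem: higher power} for $v=u_4^{\kappa-3}$), the source $\Source$ is, microlocally near $q$, conormal to the point, $\Source\in I^m(N^*\{q\})$, with principal symbol at $(q,\eta)$, $\eta=\sum_{j=1}^4\zeta_j$, equal to the normalized product $-\kappa!(2\pi)^{-3}A(q)\big(\prod_{j=1}^3\sigma^{(p)}_{u_j}(q,\zeta_j)\big)\sigma^{(p)}_v(q,\zeta_4)$. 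Since $N^*\{q\}=T^*_q\Nextended$ meets $\Char(\square_g)$ cleanly in the light cone $L^*_q\Nextended$, its flow-out under $\Lambda_g$ is exactly the $\Lambda_{\text{int}}$ of \eqref{eq:set_Lambda_q}; thus $\U^{(\kappa)}=Q\Source$ is, away from $q$ and off $\Y$, a Lagrangian distribution in $I^{-4n-1/2}(\Lambda_{\text{int}})$. Finally, the situation of Lemma~\ref{lem: Lagrangian 1 wave} applies verbatim with $\Sigma$ replaced by $N^*\{q\}$ and $\Lambda_1$ by $\Lambda_{\text{int}}$: transporting the source symbol along the bicharacteristic from $(q,\eta)$ to $(y_0,w_0)$ multiplies it by $R(y_0,w_0,q,\eta)$, yielding \eqref{principal symbol of cal U}.

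The main obstacle I expect is the bookkeeping of the lower-order (one-, two-, and three-wave) contributions near $q$: one must verify that only the genuine four-fold transversal intersection produces the point-conormal singularity whose flow-out reaches $\mathcal L^+(q)$, that the three-wave pieces are confined to $\Y$ and $\H$ and hence discardable, and that the symbol calculus does not degenerate off the exceptional set. Keeping track of the orders so that the product lands precisely in $I^{-4n-1/2}(\Lambda_{\text{int}})$ with the stated normalization constant $(2\pi)^{-3}$ is the delicate point, since it requires combining the convolution formula of Lemma~\ref{lem: higher power} with the transversal-intersection symbol product and the order $-3/2,-1/2$ of the kernel of $Q$.
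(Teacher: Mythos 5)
Your proposal is correct and follows essentially the same route as the paper's proof: decompose the product of conormal waves into a main term conormal to the intersection point $q$ (on $\Lambda_{1234}=N^*\{q\}$) plus remainders with wavefront in $\tilde\Gamma(\eps)$, apply $Q$ using propagation of singularities for the remainder and the flow-out/paired-Lagrangian calculus for the main term, and obtain the symbol as the convolution-normalized product of the $\sigma^{(p)}_{u_j}$ transported by the factor $R$ from Lemma~\ref{lem: Lagrangian 1 wave}. The only difference is presentational: where you re-derive the product decomposition and symbol formula from the transversal-intersection calculus of \cite{GU1993} and Lemma~\ref{lem: higher power}, the paper cites Propositions 3.11, 3.12 and 5.6 of \cite{LUW} (which rest on the same calculus), so the two arguments coincide in substance.
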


We remark that as by Lemma~\ref{lem: Lagrangian 1 wave} the principal symbols of $u_j$ does not depend on the coefficient $B$, we see that the principal 
symbol of $\U^{(\kappa)}$ given in \eqref{principal symbol of cal U} does not depend on $B$.
\begin{proof}
In the proof of both cases (A) and (B) we use the fact that we consider observations at the point
${ y_0}\in  \goodN(\vec x,\vec \xi)$ and thus  the point $y_0$ has a neighborhood $V_0$
such that in the chronological past $I^-(V_0)$
the linearized waves $u_j\in \I(K_j)$ are conormal distrubutions associated to smooth
submanifolds $K_j\cap I^-(V_0)$. 

The case (a) follows from Prop. 5.6 in \cite{LUW} (see also
Theorem 3.3 of \cite{KLU2018} about the detailed analysis for 2nd order non-linearity).

Next we prove the case (b). First, we
consider the claim (ii).
As in the proof of Lemma~\ref{lem: Lagrangian 1 wave}, we start by recalling that the causal inverse $Q=(\square_{g} + B)^{-1} \in I^{-\frac{3}{2},-\frac{1}{2}}(\Delta_{T^*{\Nextended}}^\prime, \Lambda_{g})$.
Therefore, for $f_j\in I^{-n+1}(\Sigma_j)$ we have that $u_j:= Qf_j \in I^{-n-\frac{1}{2},-\frac{1}{2}}(\Sigma_j,\Lambda_j)$.

By  Prop. 5.6 in \cite{LUW} (see also  Prop. 3.11, claim (i) in \cite{LUW} on the detailed
analysis when ${{\kappa}}=4$),
 we know that the product $u_1^{{{\kappa}}-3}u_2u_3u_4$ can be expressed as
\begin{equation*}
\begin{cases}
u_1^{{{\kappa}}-3}u_2u_3 {\color{black} v} = w_0 + w_1,\\
w_0|_{\M\setminus \Y((\vec x,\vec\xi),s_0)} \in I^{-4n+1+{\color{black} ({{\kappa}}-4)(1-n)}}(\Lambda_{1234}), \quad w_1\in \mathcal{D}'({\Nextended}),\\
\WF(w_1) \subset 
\tilde\Gamma(\eps),
\end{cases}
\end{equation*}
where $\tilde\Gamma(\eps)$ is given by \eqref{eq:gamma_ngbh}.

By H\"ormander's theorem about propagation of singularities \cite[Theorem 26.1.1]{H4}
$$
\WF(-{{\kappa}}!Q(Aw_1)) \subset \tilde\Gamma(\eps) \cup \H(\eps),
$$
where $\H(\eps)$ is as in \eqref{eq:H_ngbh}.
Noting that $\pi(\H(\eps))$  contains an $\eps$-neighbourhood of $\Y((\vec x,\vec\xi))$ yields for small enough $\eps>0$ that $y_0$ has a neighbourhood $W$ such that $W\cap \H(\eps)=\emptyset$.
Finally, applying $Q$ to $-{\color{black} {{\kappa}}}!Aw_0$ yields
that in the set $\M\setminus \Y((\vec x,\vec\xi),s_0)$
$$
 -{\color{black} {{\kappa}}}!Q(Aw_0)\in I^{-4n+1+{\color{black} ({{\kappa}}-4)(1-n) -\frac{3}{2}},-\frac{1}{2}}(\Lambda_{1234},{\color{black} \Lambda_{\text{int}}}),
$$
where 
{\color{black}
$\Lambda_{\text{int}}= \Lambda_{g}'\circ (\Lambda_{1234}\cap \mathrm{char}(\square_{g}))$ is the Hamiltonian flow-out of $\Lambda_{1234}$. The notation ${\color{black} \Lambda_{\text{int}}}$ refers
to the fact that this Lagranian submanifold is associated to the interaction of waves.}
Hence, assuming $A(q) \neq 0$, we have 
$$
\U^{({{\kappa}})}\big|_{\Wplus\setminus \Y(\vec x,\vec\xi)}\in I
({\color{black} \Lambda_{\text{int}}}).
$$
The claim about the principal symbol $\sigma_{\U^{({{\kappa}})}}^\mathrm{p}$ follows from 
Propositions 3.12(i) and 5.6 in  \cite{LUW}
and Lemma~\ref{lem: Lagrangian 1 wave}.

For claim (i), note that $(y_0,w_0)\in \WF(\U^{({{\kappa}})})$ if either $w_0$ is not light-like and $(y_0,w_0)\in \WF(\Source)$ or $w_0$ is light-like and $(\gamma_{y_0,w_0^\sharp}(s),\gamma_{y_0,w_0^\sharp}(s)^\flat)\in \WF(\Source)$, for some $s\in\R$.
By the above considerations, we know that
$\WF(\U^{({{\kappa}})})\subset {\color{black} \Lambda_{\text{int}}}\cup \tilde\Gamma(\eps)\cup \H(\eps)$.
It follows from \eqref{eq:set_Lambda_q} that $\pi({\color{black} \Lambda_{\text{int}}}) = \mathcal L^+(q)$, so when $y_0\not\in J^+(q)$, we have that $(y_0,w_0)\not\in {\color{black} \Lambda_{\text{int}}}$.
On the other hand, we assumed $y_0\not\in \Y(\vec x,\vec\xi)$.
Taking small enough $\eps>0$ it holds that $(y_0,w_0)\not\in \H(\eps)$.
So $y_0$ is not in the wave-front set of $\U^{({{\kappa}})}$ and hence it has a neighbourhood $V$ where $\U^{({{\kappa}})}$ is smooth.
\end{proof}

\begin{lemma}\label{lem: Eq construction} 
Let $x_j\in  \Wminus$, $j=1,2,3,4$ and   
$\xi_j\in T_{x_j}N_{\ext}$ be future directed light-like vectors, and consider geodesics  $\gamma_{x_j,\xi_j}(\R_+)$.
Also, let $t_0\ge 0$ be such 
that $\gamma_{x_j,\xi_i}([0,t_0])\subset  {{N}}^-$.
Then using the  extended source-to-solution operator $L_{g_{\ext},p_+}$ we can determine a set $S=S(\vec x,\vec \xi,t_0)$ having the following properties:

(i) If all  four geodesics $\gamma_{x_j,\xi_j}([t_0,\infty))$,  $j=1,2,3,4$ intersect at a point $q$ before the first cut point
of any of these geodesics {\mnewtext  and $A(q)\not = 0$}, then  $S=\mathcal E_V(q)$, where $V= \Wplus$.

(ii) If all  four geodesics $\gamma_{x_j,\xi_j}([t_0,\infty))$,  $j=1,2,3,4$ do not intersect before the first cut point
of any of these geodesics {\mnewtext  or they intersect at the point $q$ having
a neighborhood $V_q$ where $A|_{V_q}=0$}, then $S\subset  \Wplus\setminus \mathcal{N}((\vec x, \vec \xi), t_0)$.
\end{lemma}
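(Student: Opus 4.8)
The plan is to read the $\kappa$-fold interaction wave $\U^{({\kappa})}$ off the source-to-solution operator and to \emph{define} $S$ through its singularities, after which Lemma~\ref{thm:analytic limits A wave A and B} identifies $S$ geometrically. First I would fix $n\in\Z_+$ large and $s_0>0$ small and attach to each $x_j$ a conormal source $f_j\in I^{n+1}(\Sigma(x_j,\xi_j,s_0))$, supported in $\Wminus$ so near the past null infinity that the causal-independence requirement \eqref{eq:source causality} holds; this is where the hypothesis $\gamma_{x_j,\xi_j}([0,t_0])\subset{{N}}^-$ enters, since it lets the relevant interaction occur only for parameters past $t_0$. By Lemma~\ref{Lemma L-maps} the solution $u_{\vec\e}$ of \eqref{eq:nonlinear wave equation in Penrose 3} with source $f_{\vec\e}=\sum_j\e_jf_j$ depends (smoothly) on $\vec\e$ near $\vec\e=0$, so the multilinearization $\U^{({\kappa})}=D^\kappa u_{\vec\e}|_{\vec\e=0}$ of \eqref{eq:4th_order_solutions} is recovered, on $\Wplus\cap I^-_\Next(p_+)$, by applying $\p_{\e_1}\p_{\e_2}\p_{\e_3}\p_{\e_4}^{\kappa-3}$ to the known quantity $L_{g_{\ext},p_+}(f_{\vec\e})$ and letting $p_+$ range over $\hat\mu(0,\pi)$. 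Thus the data determine $\U^{({\kappa})}|_{\Wplus}$ and, in particular, its singular support. I would then \emph{define} $S$ to be the earliest part — with respect to the a priori known causal structure of ${{N}}^+\subset\R\times\Sp^3$ — of $\operatorname{singsupp}(\U^{({\kappa})})\cap\Wplus$, with the four geodesics $\gamma_{x_j,\xi_j}$ and the exceptional three-wave set $\Y(\vec x,\vec\xi)$ of \eqref{XYH sets} deleted. All deleted pieces, and the causal relations used to select earliest points, are computable from the data or known a priori on ${{N}}^+$, so $S$ is genuinely determined by $L_{g_{\ext},p_+}$.

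\emph{Case (i).} Suppose the four geodesics meet at $q\in\goodN(\vec x,\vec\xi)$ before any cut point, with $A(q)\ne0$. The four future null directions $\dot\gamma_{x_j,\xi_j}(t_j)$ at the single point $q$ are generically linearly independent in the $4$-dimensional $T_qN_\ext$, so condition \eqref{T} holds. Lemma~\ref{thm:analytic limits A wave A and B}(b) then applies: its first claim makes $\U^{({\kappa})}$ smooth off $\mathcal L^+(q)$, and its second shows that at each $y_0\in\mathcal L^+(q)\setminus\Y(\vec x,\vec\xi)$ the wave is a Lagrangian distribution on $\Lambda_{\text{int}}$ with principal symbol \eqref{principal symbol of cal U}, which is nonzero because $A(q)\ne0$ and the factors $\sigma^{p}_{u_j}(q,\zeta_j)$ do not vanish by Lemma~\ref{lem: Lagrangian 1 wave}. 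In particular every point of $\mathcal E_V(q)$, being reached before the cut (hence conjugate) parameter of its ray from $q$, is a genuine singularity, so $\mathcal E_V(q)\subseteq\operatorname{singsupp}(\U^{({\kappa})})\subseteq\mathcal L^+(q)$ off the deleted sets. Selecting earliest points returns exactly $\mathcal E_V(q)$: a point $\gamma_{q,\zeta}(s)$ with $s>\rho(q,\zeta)$ lies in $I^+(q)$, hence in the timelike future of an earlier element of $\mathcal E_V(q)\subseteq\operatorname{singsupp}$, and is discarded, which is precisely the definition \eqref{eq:earliest light observation sets} of $\mathcal E_V(q)$.

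\emph{Case (ii).} If the geodesics do not meet before their cut points we are in situation \eqref{A}, and Lemma~\ref{thm:analytic limits A wave A and B}(a) makes $\U^{({\kappa})}$ smooth on $\goodN(\vec x,\vec\xi)\cap\Wplus$ away from the $\gamma_{x_j,\xi_j}$ and $\Y(\vec x,\vec\xi)$; if instead they meet at $q$ but $A|_{V_q}=0$, the symbol \eqref{principal symbol of cal U} vanishes and the inclusion $\WF(\U^{({\kappa})})\subset\Lambda_{\text{int}}\cup\tilde\Gamma(\eps)\cup\H(\eps)$ gives the same smoothness. In either subcase no singularity of $\U^{({\kappa})}$ survives in the good region, so after the deletions $S\cap\mathcal N((\vec x,\vec\xi),t_0)=\emptyset$, i.e.\ $S\subset\Wplus\setminus\mathcal N((\vec x,\vec\xi),t_0)$.

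The most delicate step throughout is the passage from the raw singular support to the earliest light observation set. One must excise the three-wave contributions $\Y$ and $\H(\eps)$ cleanly (so that the deleted set does not accidentally remove genuine points of $\mathcal E_V(q)$ nor leave spurious ones), control the limit $s_0\to0$ so that the reconstructed $S$ is independent of the auxiliary source width, and verify that the earliest-point selection carried out in the known region ${{N}}^+$ reproduces $\mathcal E_V(q)$ rather than a proper subset or superset of $\mathcal L^+(q)\cap\Wplus$. I expect this to be the main obstacle, and I would carry it out following the limiting and propagation-of-singularities arguments of \cite{KLU2018,LUW}.
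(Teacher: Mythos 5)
Your overall strategy --- recover $\U^{(\kappa)}$ from the source-to-solution operator, use Lemma~\ref{thm:analytic limits A wave A and B} to locate its singularities on $\mathcal L^+(q)$, and then extract the earliest singular points in the known region ${{N}}^+$ --- is the same as the paper's. But there is a genuine gap in how you verify the hypotheses of Lemma~\ref{thm:analytic limits A wave A and B}(b). You claim that the four null directions $\dot\gamma_{x_j,\xi_j}(t_j)\in T_qN_{\ext}$ are ``generically linearly independent, so condition \eqref{T} holds.'' The lemma, however, is asserted for an \emph{arbitrary} given $(\vec x,\vec\xi)$, and there is no genericity to appeal to: four null vectors at a point of a $(1+3)$-dimensional space-time can perfectly well be linearly dependent. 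In Minkowski coordinates, for instance, $(1,e_1)+(1,-e_1)-(1,e_2)-(1,-e_2)=0$. For such a configuration condition \eqref{T} fails, Lemma~\ref{thm:analytic limits A wave A and B}(b) --- in particular the decomposition $\eta=\sum_j\zeta_j$ and the symbol formula \eqref{principal symbol of cal U} --- is unavailable, and your set $S$, which you define from the singular support of the single wave $\U^{(\kappa)}$ built from the \emph{fixed} sources, may fail to contain $\mathcal E_V(q)$ at all. Case (i) of the lemma is then not established.

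The paper circumvents exactly this by not defining $S$ from one fixed family of sources. It introduces a detection condition (D) that asks, for every $s,s_0\in(0,\hat s)$, for the existence of \emph{perturbed} directions $(x_j',\xi_j')$ in the $s$-neighborhood of $(x_j,\xi_j)$, together with conormal sources whose wavefront sets lie in $s_0$-neighborhoods of these directions, such that the resulting $\kappa$-fold interaction wave is non-smooth at $y$; the set $S(\vec x,\vec\xi,t_0)$ consists of the points satisfying (D). The key geometric observation is that if the original geodesics meet at $q$ with $A(q)\neq 0$, then arbitrarily close to $(\vec x,\vec\xi)$ there are directions whose geodesics \emph{still pass through $q$} and in addition satisfy \eqref{T}; this quantification over perturbations makes the definition both computable from the data and immune to degenerate configurations, and it is what your proposal is missing. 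The remaining delicate step --- passing from detected singularities to $\mathcal E_V(q)$, for which the paper forms the regular set $S_{reg}$, its closure $S_{cl}$, and the earliest points $S_e$ along the known curves $\mu_a=(\R\times\{a\})\cap {{N}}^+$, invoking Lemma 4.4 of \cite{KLU2018} --- you flag correctly and defer to the same references, so that part is a sketch-level deferral rather than an error; but without repairing the treatment of \eqref{T}, the argument for claim (i) does not go through.
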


\begin{proof}

Consider $\vec x=(x_j)_{j=1}^4,$  $x_j\in {{N}}^-$  and $\vec \xi=(\xi_j)_{j=1}^4,$  where
$\xi_j\in T_{x_j}N_{\ext}$ are future directed light-like vectors. Also, let $t_0>0$ be so small
that $\gamma_{x_j,\xi_i}([0,t_0])\subset  {{N}}^-$.
Similarly to  
\cite[Section 3.5]{KLU2018},
we say that a point $y\in  \Wplus$,
   satisfies the singularity {\it detection condition} ($D_{}$)  
with light-like directions $(\vec x,\vec \xi)$, 
   and $t_0,\hat s>0$
  if
  \medskip  
  
  \noindent
(${\bf D}_{}$) {\it For any $s,s_0\in (0,\hat s)$ and   $j=1,2,3,4$
there exists $(x_j^{\prime},\xi_j^{\prime})$
 in the
$s$-neighborhood of $(x_j,\xi_j)$, open sets $B_j\subset B_{g^+}(\gamma_{x_j^{\prime},\xi_j^{\prime}}(t_0),s)$, satisfying $B_j\cap J ^+(B_k)=\emptyset$ for 
$j\not =k$, such that the following is valid: There are  ${f}_{j}\in { I}^{\mu+1}(Y((x_j^{\prime},\xi_j^{\prime});t_0,s_0))$ 
such that $\supp(f_j)\subset B_j$, the wavefront set
of $f_j$  is in the $s_0$-neighborhood of  $(x_j^{\prime},\xi_j^{\prime})$, and  for the solution 
 $u=u_{\vec \epsilon}$ 
 of \eqref{eq:nonlinear wave equation in Penrose 3}
with the source ${f}_{\vec \epsilon}=\sum_{j=1}^4
\epsilon_j{f}_{j}$
it holds that
$\mcu^{({{\kappa}})} = \p_{\eps_1}^{{{k}}-3}\p_{\eps_2}\p_{\eps_3}\p_{\eps_4}    u_{\vec \epsilon}|_{\{\eps_1=\eps_2=\eps_3=\eps_4 = 0\}} $
is not
$C^\infty$-smooth at $y$.}
  \medskip

We see that 
if geodesics $\gamma_{x_j,\xi_j}(\R_+)$ intersect at a point 
$q\in \mathcal{N}((\vec x, \vec \xi), t_0)$ {\mnewtext  where $A(q)\not =0$},
then arbitrarily close to 
 $(x_j,\xi_j)_{j=1}^4$ in $(L^+N_{\ext})^4$ there are  $(x_j',\xi_j')_{j=1}^4$ 
such that the geodesics $\gamma_{x_j',\xi_j'}(\R_+)$ intersect at the point $q$
and also the condition (T) is valid.

Thus Lemma~\ref{thm:analytic limits A wave A and B} implies that 
 the set
\begin{equation}
\begin{split}
S(\vec x,\vec \xi,t_0)
:=&\{y\in \Wplus\mid \hbox{there is  $\hat s>0$ such that }\\
&\qquad\hbox{$y$ satisfies (D) with $(\vec x,\vec \xi)$ and 
$t_0,\hat s$}\}
\end{split}
\end{equation}
has the property that 
\begin{multline*}
S(\vec x,\vec \xi,t_0)\cap \bigg(\mathcal{N}((\vec x, \vec \xi), t_0) \setminus (\Y^{(3)}\cup \bigcup_{k=1}^4 K_j)\bigg)
\\
=
 \mcl^+(q)\cap  \Wplus\cap \bigg(\mathcal{N}((\vec x, \vec \xi), t_0) \setminus (\Y^{(3)}\cup \bigcup_{k=1}^4 K_j) \bigg),
\end{multline*}
 where $\Y^{(3)}=\Y((\vec x,\vec\xi),s_0)$.
Roughly speaking, this means that if  the
 geodesics $\gamma_{x_j,\xi_j}(\R_+)$ intersect at $q$ before their first cut points, then
the linearized waves $v_j=Q_g f_j$ interact at the point $q$ and produce a wave $\mcu^{({{k}})}$  that in the set $\mathcal{N}((\vec x, \vec \xi), t_0)$ may be singular only on the future 
light cone $ \mcl^+(q)$  emanating from $q$. Moreover, at any point $y\in  \mcl^+(q)\cap  \mathcal{N}((\vec x, \vec \xi), t_0)$
the wave $\mcu^{({{k}})}$  is  non-smooth near $y$  if one makes a suitable perturbation to sources $f_j$.

Define 
 $S_{reg} (\vec x,\vec \xi,t_0)$ be the set  
of the points  $y\in S(\vec x,\vec \xi,t_0)$ having a neighborhood $W\subset  \Wplus$  
such that the intersection $W\cap S(\vec x,\vec \xi,t_0)$  
is a non-empty $C^\infty$-smooth 3-dimensional submanifold.  
Moreover, let  $S_{cl} (\vec x,\vec \xi,t_0)$ be the closure of the set $S_{reg} (\vec x,\vec \xi,t_0)$  in $ \Wplus$
and define  $S_{e} (\vec x,\vec \xi,t_0)$ to be the set of those $y\in S_{cl} (\vec x,\vec \xi,t_0)$
for which any  geodesics $\mu_a=(\R\times \{a\})\cap {{N}}^+$, $a\in \mathbb S^3$,   containing $y$  does not intersect $S_{cl} (\vec x,\vec \xi,t_0)$ in the chronological past of $y$.

The proof of Lemma 4.4 of \cite{KLU2018} 
 shows
the following: First, in the case when all four geodesics $\gamma_{x_j,\xi_j}(\R_+)$, $j=1,2,3,4$ intersect at some point 
$q\in \mathcal{N}((\vec x, \vec \xi), t_0)$, the above constructed set  $S_{e} (\vec x,\vec \xi,t_0)$ coincides
with $\mathcal E_V(q)$ with $V= \Wplus$. Second, in the case when all four geodesics $\gamma_{x_j,\xi_j}(\R_+)$ do not intersect 
at any point of
$\mathcal{N}((\vec x, \vec \xi), t_0)$  {\mnewtext  or they intersect at a point near which $A$
vanishes}, the above constructed set  $S_{e} (\vec x,\vec \xi,t_0)$ does not intersect 
$\mathcal{N}((\vec x, \vec \xi), t_0)$. This proves the claim for $S(\vec x,\vec \xi,t_0)=S_{e} (\vec x,\vec \xi,t_0)$.

\end{proof}

\section{Proof of the main theorem}\label{sec: proof of main thm}

\begin{proof} (of Theorem~\ref {main thm for general manifold})
Let us consider two manifolds $(M_1,g_{M_1})$ and $(M_2,g_{M_2})$ with asymptotically 
Minkowskian infinities that are visible in the whole manifold and $(N_1,g_1)$ and $(N_2,g_2)$ be conformal to them.
Let   $(N_{\ext}^1,\tilde g_1)$ and $(N_{\ext}^2,\tilde g_2)$  be the
`non-physical extensions' of $(N_1,g_1)$ and $(N_2,g_2)$ that are obtained
by gluing manifolds ${{N}}^+$ and ${{N}}^-$ to these spaces.


The proof consists of several steps; let us provide a brief overview of its structure. First, we construct neighbourhoods of $\nullinf^+$ and $\nullinf^-$, such that future directed null geodesics do not have conjugate points. This allows us to reconstruct the conformal type of these neighbourhoods of $\nullinf^\pm$. We then study a gauge-like conformal transformation in these neighbourhoods and show that in the neighbourhoods of $\nullinf^\pm$, the source-to-solution maps agree up to smoothing error of order one, at least locally. This allows us to consider source-to-solution maps (locally) in a neighbourhood of $\widehat\mu$. Having established that the source-to-solution maps of $N_\ext^1$ and $N_\ext^2$ agree in a neighbourhood of $\widehat\mu$, we are in the regime of~\cite{KLU2018}, and can finish the proof using local measurements.
\medskip

{\it {\bf Step 1:} Defining neighborhoods of $\mathcal I^-$ and $\mathcal I^+$ with no cut or conjugate points.}

We recall that $\hat \mu(s)$ is the path $\hat \mu(s)=(s,\SP)\in \R\times \mathbb S^3$, $s\in (-\pi,\pi)$.

On the both manifolds $(N_{\ext}^1,\tilde g_1)$ and $(N_{\ext}^2,\tilde g_2)$,
the metric tensor in the set $J^+(i_0)\cap J^-(p_{+2}) \subset N_{\ext}^j$
coincides with the product metric of $\R\times \mathbb S^3$. Let
 $\xi\in L^+_{i_0}N_{\ext}^j$ be a light-like vector at the point $i_0$, normalized so that $\gamma_{i_0,\xi}(\pi)=\NP$. Then,
  the 
cut locus functions on the manifolds $N_{\ext}^1$ and $N_{\ext}^2$ satisfy $\rho(i_0,\xi)=\pi$.
 Since the cut locus function $\rho(x,\xi)$ 
is lower semi-continuous and ${{{N}}}^+$ is isometric to a subset of
$\R\times \mathbb S^3$,  for any $h>0$ there is a neighborhood $V_h\subset L^+ N_{\ext}^j$
of  the point $(i_0,\xi)$ such that  $\rho(y,\eta)>\pi-h$ for all $(y,\eta)\in V_h$. 
Thus, if $h>0$ is small enough and $s_0^-<0$ is so close to zero that 
\[
J^+(p_0^-)\cap {{N}}^-\subset V_h,\quad p_0^-=\hat \mu(s_0^-)
\]
then the light-like geodesics $\gamma_{y,\eta}$ emanating from the points 
$y\in J^+(p_0^-)\cap {{N}}^-\subset  N_{\ext}^j$ and $\eta\in L^+ N_{\ext}^j$
have no cut points in the set $J^-(p_{+2}) \subset N_{\ext}^j$.
We denote (see Figure 9 below)
\[
W_j(s_0^-):=I^+_{N_{\ext}^j}(p_0^-)\cap I^-_{N_{\ext}^j}(p_{+{}}).
\]  
and
\[
Y_j(s_0^-):=W_j(s_0^-)\setminus  ({{N}}^+\cup {{N}}^-).
\]

Next, for fixed $j\in\{1,2\}$, let us consider $x\in \mathcal I^-\cap J^+(p_{-2})\subset {N_{\ext}^j}$ and light-like vectors
$\xi\in L^+_xN_{\ext}^j$. We define 
\ba
& &h_0(x,\xi,r)=f_{\hat \mu}^+(\gamma_{x,\xi}(r)),\\
& &F_0(x,\xi)=h_0(x,\xi,\rho(x,\xi)).
\ea

The function $h_0$ is continuous and $r\to h_0(x,\xi,r)$
is  non-decreasing for all $(x,\xi)\in L^+N$. As $\rho(x,\xi)$ is lower semi-continuous,
we see that if $(x_n,\xi_n)\to (x,\xi)$ as $n\to \infty$, then
\begin{equation*}
\lim_{n\to \infty}h_0(x_n,\xi_n,\rho(x_n,\xi_n)) =
h_0(x,\xi,\lim_{n\to \infty} \rho(x_n,\xi_n))\ge h_0(x,\xi,\rho(x,\xi)).
\end{equation*}
Thus,  $F_0$ is also  a  lower semi-continuous function.

Next we make two observations.

First, let $(x,\xi)$ be a future-directed light-like vector in the tangent space of $ \mathcal I^-$ or
$x=i_0$ and $\xi\in L_{i_0}^+N_{\ext}^j$.
As  ${{{N}}}^-$ and ${{{N}}}^+$ are isometric to subsets of
$\R\times \mathbb S^3$, we see that $F_0(x,\xi)=h_0(x,\xi,\rho(x,\xi)))>0$.

Second, if $x\in  \mathcal I^-$ and $\xi\in L_x^+N_{\ext}^j$  is not tangent to $ \mathcal I^-$,
then for all $r>0$ we have $y=\gamma_{x,\xi}(r)\in I^+({\mathcal I^+})$,
and thus $f_{\hat \mu}^+(y)>0$. Hence,  if $x\in  \mathcal I^-\cup \{i_0\}$ and $\xi\in L_x^+N_{\ext}^j$, then $F_0(x,\xi)=h_0(x,\xi,\rho(x,\xi))>0$.
By using compactness of $\hbox{cl}(\mathcal I^-\cap J^+(p_{-2}))$,
we see that 
\beq\nonumber
s_{{0}}^+:=\frac 14 \min\{ h_0(x,\xi,\rho(x,\xi)) & | &  x\in \hbox{cl}(\mathcal I^-\cap J^+(p_{-2})),
\\ \label{s0 plus}
&& (x,\xi)\in L^+N,
\ \|\xi\|_{g_+}=1\}>0.
\eeq
Then, every point $x\in \hbox{cl}(\mathcal I^-\cap J^+(p_{-2}))$
has a neighborhood $B_x\subset N_{\ext}^j$,
such that  for all $y\in B_x$ and $\eta\in L_y^+N_{\ext}^j$ it holds that
$h_0(y,\eta,\rho(y,\eta))>2s_{{0}}^+$. Let $p_{{0}}^+=\hat \mu(s_{{0}}^+)$.
Next, let us cover the compact set
 $ \hbox{cl}(\mathcal I^-\cap J^+(p_{-2}))$
with a finite number of open sets ${B}_k={B}_{x_k}$, $k=1,2,\dots,K$, see Fig.\ \ref{Fig 9.}.
Then, using compactness of $({{N}}^-\cap J^+(p_{-2}))\setminus 
\bigcup_{k=1}^K {B}_k\subset {N_{\ext}^j}$ and that  ${{{N}}}^+$ is isometric to a subset of
$\R\times \mathbb S^3$, we see that
\ba
s_{00}^-:=\max \{f_{\hat \mu}^+(x)&|&  x\in ({{N}}^-\cap J^+(p_{-2}))\setminus 
\bigcup_{k=1}^K {B}_k
\}<0.
\ea
Let  $p_{00}^-=\hat \mu(s_{00}^-)$. Then we see that 
for all $z\in {N_{\ext}^j}$ satisfying
\beq\label{z points}
z\in  ({{N}}^-\cap J^+(p_{-2}))\setminus I^-(p_{00}^-)\subset \bigcup_{k=1}^K {B}_k
\eeq
and $\zeta\in L^+_z{N_{\ext}^j}$, it holds that $h_0(z,\zeta,\rho(z,\zeta))>2s_{{0}}^+$. 
Then by \eqref{s0 plus} and \eqref{z points}, the light-like
geodesics $\gamma_{z,\zeta}$ do not have cut points
in the set $J^-(p_{{0}}^+)\subset {N_{\ext}^j}$.

We denote
\ba
& &X_j(s):=(I^-_{N_{\ext}^j}(\hat \mu(s))\cap I^+_{N_{\ext}^j}(p_{-{}}))\setminus ({{N}}^+
\cup {{N}}^-)\subset J^-(p_{{0}}^+),
\ea
so that
$$
X_j(s_{{0}}^+):=(I^-_{N_{\ext}^j}(p_{{0}}^+)\cap I^+_{N_{\ext}^j}(p_{-{}}))\setminus ({{N}}^+
\cup {{N}}^-)\subset J^-(p_{{0}}^+)
$$
see \eqref{s0 plus}, and
let
 $$ {{{Z}}}_j=({{N}}^-\cap I^+_{N_{\ext}^j}(p_{-{}}))\setminus  J^-_{N_{\ext}^j}(p_{{00}}^-).
 $$
Note that by \eqref{z points}, $ {{{Z}}}_j\subset \cup_{k=1}^K {B}_k.$
\medskip

{\it {\bf Step 2:} Construction of the conformal type of the  neighborhoods of $\mathcal I^-$ and $\mathcal I^+$.}
 
By using sources $f_k$ that are conormal distributions supported in 
${{{Z}}}_j$,
where $j=1,2$,
and the interaction of waves,
observed  in ${{N}}^+$, we apply Lemma~\ref{lem: Eq construction}. 
We see that the light-like geodesics emanating from points $(x_k,\xi_k)\in L^+ {{{Z}}}_j$, 
$x_k\in {{{Z}}}_j\subset  {{N}}^-$, 
$k=1,2,3,4$ do not have cut points on
$J^-(p_{{0}}^+)\subset {N_{\ext}^j}$.
Let now ${\tilde U}= I^-_{N_{\ext}^j}(p_{{0}}^+)\cap {{N}}^+$ and $S\subset {\tilde U}$ be the set constructed in
Lemma  \ref{lem: Eq construction}
 with the initial directions  $(x_k,\xi_k)_{k=1}^{4}$.
 If $S\cap I^-_{N_{\ext}^j}(p_{{0}}^+)\not = \emptyset$,
Lemma   \ref{lem: Eq construction}
implies that the geodesics $\gamma_{x_k,\xi_k}([0,\infty))$,  $k=1,2,3,4$ 
have to intersect at some point $q\in I^-_{N_{\ext}^j}(p_{{0}}^+){\mnewtext \cap \overline N}$
and the set $S$  satisfies 
$S=\mathcal E_{{{N}}^+}(q)$.
{\mnewtext  In the case when  $S\cap I^-_{N_{\ext}^j}(p_{{0}}^+) = \emptyset$,
 the four geodesics  do not have
common intersection points in the set  $ I^-_{N_{\ext}^j}(p_{{0}}^+){\mnewtext  \cap N}$}. 

Moreover, we see that for all $q\in X_j(s_{{0}}^+)$ there exists $(x_j,\xi_j)$ satisfying
 the above conditions such that the geodesics
 $\gamma_{x_j,\xi_j}$, $j=1,2,3,4$ intersect at $q$.

By computing the first Frechet derivatives  of 
source-to-solution maps, we obtain the source-to-solution map for the linearizated
wave equation. This map determines the earliest light observation sets $\mathcal E_U(q)$, where $U=N_+$,
for the points $q\in N_-$. Clearly, as $(N_+,g_{ext}|_{N_+})$ is a known Lorentzian manifold, we can determine also the sets
$\mathcal E_U(q)$ for  $q\in N_+$.

Using  the source-to-solution map for the linearizated
wave equation we can alsp determine the intersection $\gamma_{x,\xi}\cap {{N}}^+$ of
the set ${{N}}^+$ and the light-like geodesics  $\gamma_{x,\xi}$ emanating from
the points $x\in {{N}}^-$. Using the knowledge of the geodesic segments $\gamma_{x,\xi}\cap {{N}}^+$, $j=1,2,3,4,$  we can determine when $(x_j,\xi_j)\in L^+({{N}}^-)$ are such that the geodesics
 $\gamma_{x_j,\xi_j}$, $j=1,2,3,4$ have a common intersection point in the set ${{N}}^+$.
{\mnewtext  Finally, as the metric of ${{N}}^-$ coincides with that of $\R\times \mathbb S^3$,
we can determine when $(x_j,\xi_j)\in L^+({{N}}^-)$ are such that the geodesics
 $\gamma_{x_j,\xi_j}$, $j=1,2,3,4$ have a common intersection point in the set ${{N}}^-$.
 Summarizing, we have analyzed the cases when the geodesics
 $\gamma_{x_j,\xi_j}$, $j=1,2,3,4$ have either a common intersection point in $I^-_{N_{\ext}^j}(p_{{0}}^+){\mnewtext \cap \overline N}$ or ${{N}}^-$ or  ${{N}}^+$ 
 and also in the case when common intersection points in $I^-_{N_{\ext}^j}(p_{{0}}^+)$ do not
 exist.
 Note that some of the cases may happen at the same time but when only the first case
 takes place, we know that the geodesics have a common intersection point in the set $X_j(s_{{0}}^+).$}
 
 The above observations imply we can use the source-to-solution map to determine the light observation sets 
 $\{\mathcal E_{\tilde U}(q)\subset N_{\ext}^j\mid q\in X_j(s_{{0}}^+)\}$, $j=1,2$, that is, we see that
$$
\{\mathcal E_{\tilde U;N_{\ext}^1}(q)\subset N_1\mid q\in X_1(s_{{0}}^+)\}
=\{\mathcal E_{\tilde U;N_{\ext}^2}(q)\subset N_2\mid q\in X_2(s_{{0}}^+)\},
$$
where $\mathcal E_{\tilde U;N_{\ext}^j}(q)$ is the light observation set, {\mnewtext  that is, the intersection of
the light cone emanating from the point $q$ and the $\tilde U$}  on the manifold
${N_{\ext}^j}$.

To continue the construction, we observe that  for all $0<s<s_{{0}}^+$ and $q\in  X_j(s_{{0}}^+)$,  it holds that $q\in  X_j(s_{{0}}^+)\setminus  X_j(s)$ if and
only if $f^+_{\hat \mu}(q)>s$, {\mnewtext  that is equivalent to that $S=\mathcal E_{\tilde U;N_{\ext}^1}(q)$
satisfies $S\cap \hat \mu(s,s_{+2})\not =\emptyset$}. Thus,  
\begin{multline}\label{s0 s sets}
\{\mathcal E_{\tilde U;N_{\ext}^1}(q)\mid q\in X_1(s_{{0}}^+)\setminus  X_1(s)\} 
=\{\mathcal E_{\tilde U;N_{\ext}^2}(q)\mid q\in 
X_2(s_{{0}}^+)\setminus  X_2(s)\}.
\end{multline}
Next we use \cite[Theorem 1.2]{KLU2018} to reconstruct
the conformal class of a set from the collection of the light observation
sets of its points.
Observe that the closure of the set $ X_1(s_{{0}}^+)\setminus  X_1(s)$ is a compact
subset of  $ I^-_{N_{\ext}^j}(\hat \mu(s_{+}))\setminus  J^-_{N_{\ext}^j}(\hat \mu(s))$.
Thus, using \cite[Theorem 1.2]{KLU2018},  with the observation set $\tilde U$ that is a neighborhood of the time-like path $\hat \mu\cap \tilde U$, the formula \eqref{s0 s sets} implies that there is a conformal diffeomorphism
\beq\label{s0 s sets B1}
\Psi:X_1(s_{{0}}^+)\setminus  X_1(s)\to X_2(s_{{0}}^+)\setminus  X_2(s).
\eeq
By taking the union of these sets over $s>0$, we see that 
 there is a conformal diffeomorphism
\beq\label{s0 s sets B2}
\Psi:X_1(s_{{0}}^+)=X_1(s_{{0}}^+)\setminus  X_1(0)\to X_2(s_{{0}}^+)= X_2(s_{{0}}^+)\setminus  X_2(0).
\eeq

{\mnewtext  Observe that the images of the geodesics $\gamma_{x_0,\xi_0}$, where $x_0\in N^-$
and $\xi_0$ is light-like,
on $\mathcal E_{\tilde U;N_{\ext}^2}(X_1(s_{{0}}^+))$,
that is, 
$$
\{\mathcal E_{\tilde U;N_{\ext}^2}(q)\in \mathcal E_{\tilde U;N_{\ext}^2}(X_1(s_{{0}}^+)) \mid q\in 
\gamma_{x_0,\xi_0}([0,\T(x,\xi)))\cap \tilde U\}
$$
is
the set of those $\mathcal E_{\tilde U;N_{\ext}^2}(q)\in \mathcal E_{\tilde U;N_{\ext}^2}(X_1(s_{{0}}^+))$ that are produced in the above construction
with directions $((x_j,\xi_j))_{j=1}^4,$ where $(x_1,\xi_1)=(x_0,\xi_0)$.}
By using light-like geodesics that intersect $\mathcal I^-$ transversally, we can glue the sets $X_j(s_{{0}}^+)$ together and 
${{N}}^-$ 
and see that 
 there is a conformal diffeomorphism
\beq\label{s0 s sets c}
\Psi:\mathcal W_1^-\to \mathcal W_2^-,\quad \mathcal W_j^-:=I^-_{N_{\ext}^j}(p_{{0}}^+)\cap I^+_{N_{\ext}^j}(p_{-}). 
\eeq

{\mnewtext  Above, we have essentially reconstructed a neighborhood of $\nullinf^-$.
Next we make similar considerations in a neighborhood of $\nullinf^+$}.

Next we apply the observation we made above that  in the set 
$W_j:=J^+(p_0^-)\cap J^-(p_{+2})\subset N_{ext,j}$ the light-like geodesics emanating from $ W_j\cap {{N}}^-$ have no cut points.
Using conormal sources supported
in $W_j\cap {{N}}^-$  and the corresponding
4-tuples $(x_j,\xi_j)_{j=1}^4$, where $(x_j,\xi_j)\in L^+ (W_j\cap {{N}}^-)$, we see as above,
using Lemma~\ref{lem: Eq construction},
that 
\begin{multline}\label{s0 miinus s sets}
\{\mathcal E_{{{N}}^+;{N_{\ext}^1}}(q)\subset {{N}}^+\mid q\in Y_1(s_0^-)\setminus 
J^-_{{N_{\ext}^1}}(\hat \mu(s))
\}
\\
=
\{\mathcal E_{{{N}}^+;{N_{\ext}^2}}(q)\subset {{N}}^+\mid q\in 
Y_2(s_0^-)\setminus 
J^-_{{N_{\ext}^2}}(\hat \mu(s))
\}
\end{multline}
for all $0<s<s_{+{}}$. Then, we see as above,
using \cite{KLU2018}, Theorem 1.2, with the observation set ${N}^+$ that is a neighborhood of the time-like path $\hat \mu$,
that there is a conformal diffeomorphism
\beq\label{s0 miinus s sets B}
\Phi:Y_1(s_0^-)\setminus 
J^-_{N^1}(\hat \mu(s))\to 
Y_2(s_0^-)\setminus 
J^-_{N^2}(\hat \mu(s)).
\eeq
Taking union over sets $s>0$ 
we see 
that there is a conformal diffeomorphism 
\beq\label{s0 miinus s sets B2}
\Phi:Y_1(s_0^-)\to 
Y_2(s_0^-).
\eeq
Let $(y,\zeta)\in L^+{N}^+$ and consider the set of
the points $q\in Y_j(s_0^-)$ whose light observation
set $\mathcal E_{N^+}(q)$ contains the light-like geodesic 
$\gamma_{y,\zeta}([0,r_0])\subset {{{N}}}^-$,
that is, the set 
\ba
\Gamma_{y,\zeta}&:=&\{\mathcal E_{{{N}}^+}(q) 
\in \mathcal E_{{{N}}^+}(Y_j(s_0^-))\ |\ \gamma_{y,\zeta}([0,r_0])\subset
\mathcal E_{{{N}}^+}(q)\}.
\ea
We see as in \cite[Lemma 2.6]{KLU2018} that
\ba
\Gamma_{y,\zeta}=\{\mathcal E_{{{N}}^+}(q) \mid q\in \gamma_{y,\zeta}(\R)\cap Y_j(s_0^-)\}.
\ea
Using this, we can construct the images of the light-like geodesics $ \gamma_{y,\zeta}(\R)
\cap({{{N}}}^-\cup  Y_j(s_0^-))$,
 in the map $\mathcal E_{N^+}$. Using such geodesics that intersect $\mathcal I^+$ transversaly,
 we can glue the sets $Y_j(s_0^-)$ together with 
${N}^+$.
As this construction can be done on both manifolds $N_{\ext}^j$, $j=1,2,$ we see that 
 there is a conformal diffeomorphism
\beq\label{s0 s sets c2}
\Psi:\mathcal W_1^+\to \mathcal W_2^+,\quad \mathcal W_j^+:=I^-_{N_{\ext}^j}(p_{+})\cap I^+_{N_{\ext}^j}( p_{{0}}^-).
\eeq
By combining the conformal diffeomorpshims
\eqref {s0 s sets c} and \eqref {s0 s sets c2},
we see that there is a conformal diffeomorphism
\beq\label{s0 s sets combined}
\Psi:\mathcal W_1\to \mathcal W_2,\quad \mathcal W_j:=
\mathcal W_j^+\cup \mathcal W_j^-\subset N_{\ext}^j.
\eeq

\medskip

{\it {\bf Step 3:} Conformal transformation of the  neighborhoods of $\mathcal I^-$ and $\mathcal I^+$ and source-to-solution maps.}

\noindent
\begin{minipage}{0.68\textwidth}
By the above, there exists a function $\gamma\in C^\infty(\mathcal W_1)$
such that
\beq
 g_{1} = e^{2\gamma}\Psi^*g_{2},\quad \hbox{on }\mathcal W_1.
 \eeq
As the metric tensor in the set ${N}^\pm \subset N_{\ext}^j$
coincides with the product metric of $\R\times \mathbb S^3$
we have that  $\gamma=0$ on $({{{N}}}^-\cup  {{{N}}}^+)\cap \mathcal W_1$.

Next, we will consider an extension of the  source-to-solution map for a non-linear wave equation with an additional term in the zeroth order term $B$. To do that, we introduce some notations.
Let  $V_1,V_2\subset \Next$ be relatively compact open sets, 
and $K\subset V_1$ be a compact  set.

Let
$f\in H^k_0(K)$, $\norm{f}_{H^k(V_3)} < \eps$,
where $\eps=\e_{K}>0$ is small enough.

Then there exists a unique solution to 
\begin{equation}\label{eq:nonlinear wave equation in Penrose 2 cp2}
\begin{cases}
&(\square_{\gext}+B)w + Aw^{\kappa}=f,\quad
\text{in } I^-_\Next(p_+),\\
&\supp( w)\subset J^+_\Next(\supp(f))
\end{cases}
\end{equation}
and we define the source-to-solution map 
\ba
L_{g,B,A;V_1,V_2}(f)=w|_{V_2}.
\ea
\end{minipage}
\hfill
\begin{minipage}{0.31\textwidth}
\begin{center}
    \includegraphics[height=7cm]{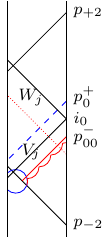}
    \captionsetup{width=.9\linewidth}
    \captionof{figure}{Subsets $W=I^+(p_0^-)\cap I^-(p_{+2})$ and $V=I^-(p_{{0}}^+)\cap I^+(p_{-2})$  of $ \Next$ in the Penrose  compactification.}\label{Fig 9.} \label{Fig 10}
\end{center}
\end{minipage}

Moreover, we define the domain  $\mathcal D_{V_1,V_2}=\mathcal D(L_{g,B,A;V_1,V_2})$ of the map $L_{g,A,B;V_1,V_2}$
to be the union
\ba
\mathcal D(g,A,B;V_1,V_2)=\bigcup_{K\subset \subset V_1}\{f\in  H^k_0(K)\mid 
 \norm{f}_{H^k(V_1)} <\e_{K}\},
\ea
where the union is taken over the subsets $K \subset V_1$ and $\e_K>0$
are sufficiently small numbers.

Next we consider a  transformation of the conformal class of the metric with the function $e^{2\gamma}$.
That is, we consider the function $\underline u=e^{-\gamma} u$ 
on the manifold $N_{\ext}^1$ where  $u$ satisfies 
the equation
\beq\label{new wave eq1}
(\square_{g_1}+B_1+{\mntext D_1})u+A_1u^\kappa=f\quad \hbox{on }\mathcal W_1,
\eeq
where $f$ is supported in $({{{N}}}^+\cup {{{N}}}^-)\cap \mathcal W_1$.
We observe that as 
$\gamma=0$ in $({{{N}}}^+\cup {{{N}}}^-)\cap \mathcal W_1$,
the equation \eqref{new wave eq1}
implies 
\beq\label{new wave eq2}
(\square_{\underline g_1}+\underline B_1+{\mntext \underline  D_1}+\underline q_1)\underline u+\underline A_1\underline u^\kappa=f,
\eeq %
where $\underline g_1=e^{2\gamma }g_1$ and
{\mntext $\underline B_1=e^{-2\gamma }B_1$,} {\mntext $\underline D_1=e^{-2\gamma }D_1$,} $\underline A_1=e^{(\kappa-3)\gamma }A_1$ and $\underline q_1=\frac{1}{6} (R_{\underline g_1}  - e^{2\gamma}R_{g_1})$ 
is a smooth function that
vanishes on  $({{{N}}}^+\cup {{{N}}}^-)\cap \mathcal W_1$. 

Next, we consider the principal symbols of the observed waves using \cite[Theorem 6.2]{LUW} (more precisely, we use
 \cite[Prop. 4.3 and 4.5]{LUW}). To this end, we use the operators $f\mapsto (\square_g+B)^{-1}f$. 
 We observe that the principal symbol
 of the operator $(\square_g+B)^{-1}$ coincides with that of $\square_g^{-1}$.

To consider the derivatives of the source-to-solution operators 
that correspond to the zeroth order terms $B_1$ and $\tilde B={\mntext \underline  B_1}+{\mntext \underline  D_1}+\underline q$, we recall 
certain details in our earlier considerations.
Let us consider the direction $(\vec x,\vec \xi)=((x_j,\xi_j))_{j=1}^4$ and $s_0$ that satisfy either the above condition (A)
or the conditions (B) and (T).
Let $f_j$ be the conormal sources associated to direction $(\vec x,\vec \xi)=((x_j,\xi_j))_{j=1}^4$, given in \eqref{conormal souces 1}. For such sources,
 we define the linearized solutions, see
\eqref{eq:linearized_solutions}, using the coefficients $B_1$ 
and $\tilde B={\mntext \underline  B_1}+{\mntext \underline  D_1}+\underline q$. 
These linearized solutions are
\beq\label{eq:linearized_solutions q} 
&& u^{B_1}_j = (\square_{g}+B_1)^{-1}f_j \in
 I({\Nextended}\setminus\{ x_j \}; \Lambda(x_j,\zeta_j,s_0)),\quad \hbox{and}\\
 && u^{{\mntext \tilde B_1}}_j = (\square_{g}+{\mntext \tilde B_1})^{-1}f_j \in
 I({\Nextended}\setminus\{ x_j \}; \Lambda(x_j,\zeta_j,s_0)).
\eeq
We see that the principal symbols of $ u^{B_g}_j$ and $ u^{{\underline B}}_j$ on $\Lambda(x_j,\zeta_j,s_0)$ coincide.
Similarly to \eqref{eq:4th_order_solutions}, we define  the waves
produced by the interaction of linearized waves,
\beq\label{eq:4th_order_solutions q}
&&\U^{(\kappa,B_1)} = (\square_{{g}}+B_1)^{-1} \Source^{B_1},\quad \U^{(\kappa,{\mntext \tilde B_1})} = (\square_{{g}}+{\mntext \tilde B_1})^{-1} \Source^{{\mntext \tilde B_1}},
\eeq
where
\begin{align}\label{eq:4th_order_source}
\Source^{B_1}&:= -{\kappa} !\cdot A_1 u_1^{B_1}u_2^{B_1}u_3^{B_1}(u_4^{B_1})^{\kappa-3},\quad 
\Source^{{\mntext \tilde B_1}} &:= -{\kappa} !\cdot \underline A_1 u_1^{{\mntext \tilde B_1}}u_2^{{\mntext \tilde B_1}}u_3^{{\mntext \tilde B_1}}(u^{{\mntext \tilde B_1}}_4)^{\kappa-3}.
\end{align}
Observe that 
 \ba
 & &\U^{(\kappa,B_1)}|_{V_+}= D^\kappa|_0L_{g,B_1,A;V_-,V_+}[f_1,f_2,f_3,f_4],\\
 & &\U^{(\kappa,{\mntext \tilde B_1})}|_{V_+}= D^\kappa|_0L_{\underline g,\underline A,{\mntext \tilde B_1};V_-,V_+}[f_1,f_2,f_3,f_4]
 \ea
 are the $\kappa$-th order (Fr\'echet) derivatives of the maps $f\to L_{g,B_1,A;V_-,V_+}(f)$
 and $f\to L_{\underline g,{\mntext \tilde B_1},\underline A;V_-,V_+}(f)$,
 evaluated 
 at the point $f=0$, see \eqref{Dkappa}.
As pointed out after the claim of Theorem~\ref{thm:analytic limits A wave A and B},
 the restrictions of the functions $\U^{(\kappa,{\mntext \tilde B_1})}$ and $\U^{(\kappa,B_1)}$ to the domain
$\goodN(\vec x,\vec \xi)$  are Lagrangian distributions associated to 
the same Lagrangian manifold
 $ \Lambda_{1234}$  and  their principal symbols
 are the same. 
  Below, we refer to this property by saying that the
Fr\'echet derivatives $D^\kappa|_0L_{g,B_1,A;V_-,V_+}$
and $D^\kappa|_0L_{\underline g,{\mntext \tilde B_1},\underline A;V_-,V_+}$ are the same
up to a smoothing error of order one.
This means that adding a potential $\underline q$ does not change the construction
of the metric $g$ or the coefficient $A$ of the non-linear term.

Also, our assumption that the source-to-solution maps 
$L_{g_1,B_{1},A_1;{{{N}}}^-,{{{N}}}^+}$ and 
$L_{g_2,B_{2},A_2;{{{N}}}^-,{{{N}}}^+}$
are the same, imply that when
the pair $(V_-,V_+)$ 
is either 
\beq\label{V plus minus sets}
({{{N}}}^-\cap \mathcal W_1^-,{{{N}}}^+\cap \mathcal W_1^-),\quad\hbox{or}\quad 
({{{N}}}^-\cap \mathcal W_1^+,{{{N}}}^+\cap \mathcal W_1^+),
\eeq
then 
\beq\label{L maps the same with q} 
\nonumber
L_{g_2,B_{2},A_2;V_-,V_+}(f)=L_{g_1,B_{1},A_1;V_-,V_+}(f)=L_{\underline g_1,{\mntext \tilde B_1},\underline A_1;V_-,V_+}(f).
\eeq
for all $f\in \mathcal D(L_{g_1,B_{1},A_1;V_-,V_+})$.
As above $(\mathcal W_2,g_2)$ and $(\mathcal W_1,\underline g_1)$ are isometric,
we will next  show that also $A_2$ and $\underline A_1$ coincide in these sets. 

By the above considerations, we have that 
the map 
\beq\label{s0 s sets combined conformald}
\Psi:(\mathcal W_1,\underline g_1)\to (\mathcal W_2,g_2)
\eeq
is a  isometric diffeomorphism.
Moreover, as $$L_{g_2,B_{2},A_2;V_-,V_+}=L_{\underline g_1,{\mntext \tilde B_1},\underline A_1;V_-,V_+}$$
and the
maps $D^\kappa|_0L_{\underline g_1,{\mntext \tilde B_1},\underline A_1;V_-,V_+}$
and $D^\kappa|_0L_{\underline g_1,{\mntext \tilde B_1},\underline A_1;V_-,V_+}$
are the same
up to a smoothing error of order one,
the
maps $D^\kappa|_{f=0} L_{\underline g_1,{\mntext \tilde B_1},\underline A_1;V_-,V_+}(f)$
and $D^\kappa|_{f=0} L_{g_2,B_{2},A_2;V_-,V_+}(f)$ of source-to-solution maps on
isometric Lorentzian manifolds $(\mathcal W_1,\underline g_1)$
and $(\mathcal W_2,g_2)$
 are the same
up to a smoothing error of order one.

\medskip

{\it {\bf Step 4:} Construction of $A$ in  neighborhoods of $\mathcal I^-$ and $\mathcal I^+$ and modified source-to-solution maps.}

{\mnewtext  The results in \cite{LUW} imply that when the metric of a Lorentzian manifold
is given, the source-to-solution operator for the non-linear wave equation determines
the coefficient $A$ of the non-linear term uniquely. Moreover, the construction
of $A$ does not depend on the possible zeroth order term $q$ in the equation. More precisely,}
using \cite[Prop. 4.3 and 4.5]{LUW}
and the formula \eqref{L maps the same with q}, 
when $(V_-,V_+)$ is the pair
$({{{N}}}^-\cap \mathcal W_1^-,{{{N}}}^+\cap \mathcal W_1^-),$
 we see that the equation $\underline A_1=\Phi^*A_2$ holds on
the set $\mathcal W_1^-.$ Similarly, using
the formula \eqref{L maps the same with q} 
when $(V_-,V_+)$ is the pair
$({{{N}}}^-\cap \mathcal W_1^+,{{{N}}}^+\cap \mathcal W_1^+),$
 we see that the equation $\underline A_1=\Phi^*A_2$ holds on
on the set $\mathcal W_1^+.$  Thus,
$\underline A_1=\Phi^*A_2$ holds 
on the set $\mathcal W_1.$ 
 Note that we do not know whether the function $\underline q_1$
is zero or not, but as this function does not change the principal
symbol of the source-to-solution map, this will not cause issues
in our considerations below.

Above,
we have shown that the sets $(V_1,\underline g_1)$ and $(V_2,g_2)$,
where we denote $V_j:=I^-(p_{{0}}^+)\cap I^+(p_{-2})\subset {N_{\ext}^j}$,
 $j=1,2$, are isometric
 and thus we can identify these sets below and  denote those by $V$.
 With this identification, the metric tensor $g_2$ and $\underline g_1$ 
 as well as the coefficients $A_2$ and $\underline A_1$ coincide on $V$.
 As  $V$ is a globally hyperbolic manifold, derivatives of the
  source-to-solution maps,  $D^\kappa|_{0} L_{\underline g_1,{\mntext \tilde B_1},\underline A_1;V,V}$ and
  $D^\kappa|_{0} L_{g_2,B_{2},A_2;V,V}$ are the same up to an order 1 smoothing error.

Similarly,  as 
  we have shown that  sets $(W_1,\underline g_1)$ and $(W_2,g_2)$,
  where $W_j=I^+(p_{{0}}^-)\cap I^-(p_{+2})\subset {N_{\ext}^j}$, $j=1,2$ are isometric,
we  identify the sets $W_j$, $j=1,2$,
and  denote them by $W$. 
As above, we see that  the
  source-to-solution maps  
  $D^\kappa|_{0} L_{\underline g_1,{\mntext \tilde B_1},\underline A_1,W,W}$ and
  $D^\kappa|_{0} L_{g_2,B_{2},A_2,W,W}$ are the same up to an order 1 smoothing error.

\medskip

{\it {\bf Step 5:} Reconstruction of near field measurements, that is, finding a family of the source-to-solution maps
in a neighborhood $\hat \mu$.}

 Next we use the path  $\mu_{mod}=\hat \mu\cap J^+_{N_{\ext}}(p_{-})\cap J^-_{N_{\ext}}(p_{+})$ and let
 $U$ be a neighborhood of $\mu_{mod}$ such that  $\hbox{cl}(U)\subset {{N}}^-\cup W$.
 Using the first derivatives of the above constructed source-to-solution maps (i.e., the linearized source-to-solution maps) we see that the causality
relations $R^<_{U,g_j}=\{(x,y)\in U\times U\mid x<_{(j)}y\}$ in the set $U$, where $<_{(j)}$ is the causality relation of the Lorentzian manifold $(N_{\ext},g_j)$, coincide for $j=1,2$.

Recall that the zeroth order term $q$ does not change principal symbols of 
the derivatives of the source-to-solution maps.
Let us next denote 
\beq\label{new S-to-S maps}
& &D^\kappa|_{0} L_{g_j,B_{j},A_j,V_1,V_2}[f_1,f_2,f_3,f_4] \\
\nonumber
&=& \p_{\e_1}\p_{\e_2}\p_{\e_3}\p_{\e_4}^{{\kappa}-3}
L_{g_j,B_{j},A_j,V_1,V_2}(\e_1f_1+\e_2f_2+\e_3f_3+\e_4f_4)\bigg|_{\vece=0},
\eeq
see \eqref{Dkappa}. 
Thus the above shows that the source-to-solution maps 
\beq\label{new S-to-S maps1}
(f_1,f_2,f_3,f_4)\to
D^\kappa|_{0} L_{\underline g_1,{\mntext \tilde B_1},\underline A_1,V_1,V_2}[f_1,f_2,f_3,f_4] 
\eeq
and 
\beq\label{new S-to-S maps2}
(f_1,f_2,f_3,f_4)\to
D^\kappa|_{0} L_{g_2,B_{2},A_2,V_1,V_2}[f_1,f_2,f_3,f_4] 
\eeq
 are the same up to  smoothing error of order 1 when 
 $(V_1,V_2)=(V,V)$,  $(V_1,V_2)=(W,W)$, or  $(V_1,V_2)=({{N}}^-,{{N}}^+\cap I^-(p_{+2})$.

We observe that any point $p\in U$ has an open neighborhood $O_p^{(1)}\subset U$ 
and any point $q\in J^+(p)\cap U$ has a neighborhood $O^{(2)}_{p,q}\subset U$ 
such that for some of the above pairs $(V_1,V_2)$ 
it holds that $O_p^{(1)}\subset V_1$ and $O^{(2)}_{p,q}\subset V_2$.
Thus, the above considerations lead to the following  conclusion:
We have shown that the metric tensors $g_j$, $j=1,2,$ and the coefficients $A_j$
of the non-linear term coincide up to a conformal transformation in a neighborhood $U$ of 
 the path  $\mu_{mod}$. Moreover,  any points $p\in U$  and $q\in J^+(p)\cap U$
and for all $f_1,f_2,f_3,f_4\in H^k_0(U)$ supported in an open neighborhood $O_p^{(1)}\subset U$
 we can determine the principal symbol of $D_\kappa|_{\vec \e=0} u^{f_{\vec \e}}$ in ${O^{(2)}_{p,q}}$.
 For points $q\in  U\setminus J^+(p)$ this principal symbol is zero, and we can find the principal symbol of 
 $D_\kappa|_{\vec \e=0} u^{f_{\vec \e}}$ in the whole set $U$. Hence, for the functions 
 $f_1,f_2,f_3,f_4$ supported in $O_p^{(1)}\subset U$,
 the maps
 \beq\label{map 1}
 (f_1,f_2,f_3,f_4)\to
D^\kappa|_{0} L_{\underline g_1,{\mntext \tilde B_1},\underline A_1,O_p,U}[f_1,f_2,f_3,f_4]
\eeq
and 
 \beq\label{map 2}
(f_1,f_2,f_3,f_4)\to
D^\kappa|_{0} L_{g_2,B_{2},A_2,O_p,U}[f_1,f_2,f_3,f_4] 
\eeq
 are the same up to  smoothing error of order 1.
Thus for such sources $f_j$ 
the principal symbols of $D_\kappa|_{\vec \e=0} (u^{f_{\vec \e}}|_U)=D^\kappa|_{f=0} L_{g_j,B_{j},A_j,U,U}(f)[f_1,f_2,f_3,f_4]$ are the same for $j=1,2$. 
\medskip

{\it {\bf Step 6:} Reconstruction of the manifold.}

{\mnewtext  Recall  that $U\subset {{N}}^-\cup W$ is a neighborhood of the connected path $\mu_{mod}$.
This makes it possible to apply the results of \cite{KLU2018,LUW} for the inverse problem where
the source-to-solution map is studied in the case when the set $U_{\i}$, where sources are supported, and the
the set $U_{\out}$, where the waves are observed, are the same neighborhood $U=U_{\i}=U_{\out}$   of a connected time-like path.}
Thus the fact that the maps \eqref{map 1} and \eqref{map 2} are the same up to  smoothing error of order 1 implies that we can determine the principal symbols of 
 functions $D_\kappa u^{f_{\vec \e}}|_{\vec \e=0}$  in $U$, where
 $f_{\vec \e}$ are linear combinations of conormal sources $f_k$, $k=1,2,3,4$, that are all supported 
 in some open set $O_p.$  Moreover, the union of these open sets $O_p$ covers the whole set $U$.

The above observation can be used   in the proof of Theorem 4.5 of \cite{KLU2018}
where in all steps
one needs to consider only sources $f_{\vec \e}$ that are supported in an arbitrarily small
neighborhoods $O_p\subset U$. In particular, for $x_0\in O_p$ and a light-like geodesic $\gamma_{x_0,\xi}$ 
emanating from $x_0$,
we see using Lemma  \ref{thm:analytic limits A wave A and B} that for all $q\in \gamma_{x_0,\xi}([0,\rho(x_0,\xi)])\cap N$,
for which we have  $a(q)\not =0$, we can determine the  light-observation set $\mathcal E_{U}(q)$.
For $q\in \gamma_{x_0,\xi}\cap N_-$ the first Frechet derivatives of the source-to-solution map 
$L_{g,B,A,O_p,U}$
determines the  light-observation set $\mathcal E_{U}(q)$ and as the Lorentz metric on $N_-$ 
is is given to us, we can determine $\mathcal E_{U}(q)$  for the points  $q\in \gamma_{x_0,\xi}\cap N_+$.
Using these observations and the proof of Theorem 4.5 of \cite{KLU2018}, we can determine
 the conformal type of 
 the space-time $I^+(p_{-})\cap I^-(p_{+})$.
 Again, we can 
perform a conformal transformation to change the quadruple
$(N_{\ext}^1,g_1,B_{1},A_1)$ to $(N_{\ext}^1,\underline g_1,{\mntext \tilde B_1},\underline A_1)$ so that there is an isometric diffeomorphism
\ba
\Psi:(I^+_{N_{\ext}^1}(p_{-})\cap I^-_{N_{\ext}^1}(p_{+}),\underline g_1)\to  (I^+_{N_{\ext}^2}(p_{-})\cap I^-_{N_{\ext}^2}(p_{+}),g_2).
\ea
After this, we can  
use the analysis of the principal symbols of the $\kappa$:th derivatives of source-to-solution maps, see
 \cite[Theorem 6.2]{LUW}, and prove that $\underline A_1=\Psi^*A_2$ on 
 $I^+_{N_{\ext}^1}(p_{-})\cap I^-_{N_{\ext}^1}(p_{+})$.
 By letting $p_+\to i_+$ and $p_-\to i_-$,
 we see that after performing a conformal transformation to the triple
$(N_{\ext}^2,g_2,A_2)$ there is an isometry
$\Psi:(N_{\ext}^1,\underline g_1)\to (N_{\ext}^2,g_2)$ and $\underline A_1=\Psi^*A_2$ on $N_{\ext}^1$.
  This proves the claim.
\end{proof}

{\mnewtext  As Schwartz class perturbations of the Minkowski space satisfy the assumptions
of Theorem~\ref {main thm for general manifold}, we see that Theorem~\ref{main thm for Minkowski} follows from
 Theorem~\ref {main thm for general manifold}. 
 Theorem \ref{main thm for general manifold generalized}
 follows by 
applying a conformal trasformation \eqref{Q0 and a0} that changes $(M,g)$ to the space-time $(M,g_0)$ 
with an asymptotically Minskowskian infinity and 
the wave equation \eqref{modified equation with Q} to \eqref{conformally Minkowskian wave equation}. 
Then, using only the restricted scattering data, we apply the  proof of 
Theorem~\ref{main thm for general manifold}  using only those points $p_{-}$ for which the sets $ \mathcal W_j^-=I^-_{N_{\ext}^j}(p_{{0}}^+)\cap I^+_{N_{\ext}^j}(p_{-})$ satisfy 
$ \mathcal W_j^-\subset S(R(t_1^*))$, and obtain the claim of {Theorem} \ref{main thm for general manifold generalized}.}

 \subsubsection{On the further generalizations}
 The techniques developed in this paper can be generalized to certain metric tensors that are non-smooth
 at the spatial infinity $i_0$ of the Penrose compactification. This situation is encountered in several physical models with a positive total 
  Arnowitt–Deser–Misner
(ADM) mass
 of the space-time, see \cite[pp.\ 276-278 and 281-285]{Wa1984}. However, 
 for such metric tensors the existence of solutions to the direct problem is a difficult and deep problem, see \cite{Joudioux1,Joudioux2}. This generalization
 is outside the scope of this paper and will be considered  elsewhere, but we will below  discuss in brief  how this
 generalization can be done. First,  let us assume $(M,g_M)$ is a space-time whose conformal compatification
  $(N,g_N)$ is such that the set $N$ is the equal to $\hat N\subset \R\times \mathbb S^3$ used to define the Penrose conformal compactification. Moreover,
  assume that the metric $g_N$ on  $N$ can be extended to $N_+$ and $N_-$ in a way that makes $N_{\ext}=N\cup N_-\cup N_+$
  a globally hyperbolic manifold with a $C^0$-metric $g_{\ext}$ that is $C^\infty$-smooth in $N_{\ext}\setminus i_0$ and $\partial N$ is a light cone
 emanating from $i_0$. Moreover,  we assume that $(M,g_M)$ is a glued Schwartchild space-time in the sense of {Joudioux}, see \cite[Def.\ 1.3]{Joudioux1}. This implies that $(M,g_M)$ is a  Chrusciel-Delay/Corvino-Schoen type space, the conformally compactified metric has a $C^\infty$-smooth extensions to the neighborhoods of the points 
 $i_+,$ and $i_-$, and the spatial infinity $i_0$ has a neighborhood
 where the metric is isometric to the Schwartchild metric. The initial data sets for such space-times satisfy the constraint equations  obtained by 
 Chrusciel-Delay \cite{Chrusciel} and Corvino-Schoen
 \cite{Corvino}. Moreover, we need to assume that $i_0$
 has no light-like cut points in the conformal infinities 
 $ {\mathcal I}^+\setminus \{i_+\}$ and ${\mathcal I}^-\setminus\{ i_-\}$.
  Second, consider the
 non-linear cubic wave equation $(\square_{g_{\ext}}+B_{g_{\ext}})u+au^3=0$, where $a(x)<0$ in $N$
 and assume that $a(x)$ can be smoothly extended by zero to $(N_-\cup N_+)\setminus i_0$. Then,
 the  Cauchy problem for the wave equation 
 \begin{equation}
    \label{cubic} (\square_{g_{\ext}}+B_{g_{\ext}})u+au^3=f,
\end{equation} 
in $N_{ext}$ 
with the compactly supported smooth sources $f$ exists and is unique, see \cite[Prop. 2.1]{Joudioux2}, and moreover, the scattering operator is globally 
defined, see \cite[Thm. 4.4]{Joudioux1}. However, to study the inverse problem with minimal data we define scattering functionals in smaller domains. To do this,
let
 $\mathcal D'_{\Gamma}(\mathcal I^-)$, see \cite[Sec. 8.2]{H1}
be the set of distributions on $\mathcal I^-$ whose wave front set is a subset of a conic set $\Gamma\subset T^*\mathcal I^-$.
Let $\pi:T^*\mathcal I^-\to I^-$ be the projection of a covector   to its base point,
and $\Gamma_n\subset T^*\mathcal I^-,$ $n\in\mathbb Z_+$ be a family of growing conic open subsets whose union is the complement
of the light-like directions in $T^*\mathcal I^-$ and for which the sets $\pi(\Gamma_n)$ are relatively compact.
Moreover, let $q_n\in \mathcal I^-$ be such that $\pi(\Gamma_n)\subset J^+(q_n)$. Then,   the scattering functionals $S_{n,p}:h\to u|_{\nullinf^+(p)}$, where $n\in\mathbb Z_+$ and  $p\in \mathcal I^+,$ are defined for
$h\in \mathcal D(S_{n,p})=\{h\in 
 H^5(\mathcal I^-)\cap \mathcal D'_{\Gamma_n}(\mathcal I^-)\mid \|h_-\|_{H^5(\mathcal I^-)}<\e_{n,p}\}$, where $\e_{n,p}>0$
 are sufficiently small. Note that such boundary values $h$ cause waves whose wave front sets propagate to $N^{\mathrm{int}}$ but not along $\mathcal I^+$. To define the domains of the source-to-solutions operators, let 
$v=v^{h}$  be the solution  of the Goursat problem for the linear wave equation in $N_-$ with the reversed causality,
\beq\label{new linear problem1}
  & & \square_{g_{\ext}} v+ B_{g_{\ext}}v=0,\quad\text{in } N_-,
  \\ \nonumber
  & &v|_{\nullinf^-}=h.
  \eeq
Then, the source-to-solution operators $L_{n,p}:f\to u^f|_{I^-(p^+)\cap N_+}$, where $n\in \mathbb Z_+$ an $p^+\in N_+$, for the equation \eqref{cubic}
 are defined analogously to \eqref{L maps defined} for $f\in \mathcal D(L_{n,p^+})$, where  
 \ba
 \mathcal D(L_{n,p})&=&\{(\square_{g_{\ext}}+B_{g_{ext}})(\psi v^h)\in H^5(N_-): h\in \mathcal D(S_{n,p}),\ \psi\in C^\infty(N_{\ext}),
 \\ & &\quad \qquad \qquad J^+(\supp(\psi))\cap \mathcal I^-\subset J^+(q_n), \ \psi=1\hbox{ near }\supp(h)\}.
 \ea
 We note that the sets $\mathcal D(L_{n,p^+})$ are defined in somewhat less natural way than the domains of the source-to-solution operators
 for the asymptotically Minkowskian space-times, but it contains
 sources which produce waves whose wave front set propagates to the
 unknown space-time $N$. Next, let $u^f$ denote the solution of the Cauchy problem 
 \eqref{eq:nonlinear wave equation in Penrose 2} with $\kappa=3$ and the source $f\in \mathcal D(L_{n,p^+})$. 
Then, the non-smoothness at $i_0$ produces singularities in the wave $u^f$, but these singularities propagate along the light cone $\mathcal I^+$
 emanating from $i_0$. Thus, the non-linear interaction does not affect to these singularities as the coefficient $a(x)$ vanishes on this light-cone. Hence
 the singularities  emanating from $i_0$  do not cause observable singularities in $N_+^{\mathrm{int}}$. 
To reconstruct the light observation sets for the wave equation  \eqref{cubic} with
a cubic non-linearity, we need to replace the analysis done in the proof of Lemma \ref{thm:analytic limits A wave A and B} 
for the $\kappa$-th order non-linearity with that arguments similar to those in proof of Theorem 6.1 in
 \cite{LUW}, that is,
 one needs to consider the singularities produced by the 5th order interaction. Alternatively,
 one can use more advanced, recent techniques developed in  \cite{hintz2024inversenonlinearscatteringmetric}
 to analyze directly the 3rd order interactions. 
 Then, by using the modified version of Lemma  \ref{thm:analytic limits A wave A and B},
 one can proceed as in the proof of Theorem \ref{main thm for general manifold}
 and reconstruct the conformal type of the manifold $\mathcal W$ that is defined in \eqref{s0 s sets combined} and contains
 the sets $V$ and $W$ (See Fig.\ \ref{Fig 10}).
Moreover, as in  the proof of Theorem \ref{main thm for general manifold}
one can see that the source-to-solutions maps \eqref{new S-to-S maps1} and \eqref{new S-to-S maps2}
are the same up to  smoothing error of order 1 for all pairs $(V_1,V_2)=(V,V)$,  $(V_1,V_2)=(W,W)$, or  $(V_1,V_2)=({{N}}^-,{{N}}^+\cap I^-(p_{+2})$.
Let $\hat \mu(s)$ be  the path
$\hat \mu(s)=(s,SP)$ and $U$ a neighborhood of $\hat\mu((0,s_{+2}))$ in $N_+$.
Let $r<0$ be such that $J^+(\hat \mu(r))\cap I^-(p_{+2})\subset W$. 
After these more difficult steps where we have constructed the space-time near the light-like and space-like infinities, let us consider how one can
determine  the earliest light observation sets $ \mathcal E_U(q)$
for the points $q\in J^+(p_-)\cap J^-(p_+)$.
To do this, we make three observations: First, as the Lorentzian manifold $W$ is already constructed and thus known, we can
 determine the sets $\mathcal E_U(q)$ for all $q\in W$. Second,
by using the first derivatives of the  source-to-solution maps \eqref{new S-to-S maps1}, we can
determine  $\mathcal E_U(q)$ for all $q\in N^-\cap I^+(p_{-2})$.
Third,
let $x_j\in N_-$, $j=1,2,3,4$ be points in a sufficiently small neighborhood of the point $\hat \mu(s)$,
$s_{-2}<s\le r$ and   
$\xi_j\in T_{x_j}N_{\ext}$ be future directed light-like vectors and $t_0>0$ be sufficiently small.
Then, we can use the above discussed version of Lemma  \ref{thm:analytic limits A wave A and B} for the wave equation with cubic non-linearity
and the proof of Lemma \ref{lem: Eq construction} to construct the set $S(\vec x,\vec \xi,t_0)$ 
given in Lemma \ref{lem: Eq construction}.
Note that these sets are either of the form $\mathcal E_U(q)$, where $q=\gamma_{x_1,\xi_1}(t)$ with $t>t_0$,
or sets that does not intersect $\mathcal N(\vec x,\vec \xi,t_0)$, c.f.\  Lemma \ref{lem: Eq construction}.

Then, we can apply a layer stripping method based on induction to
reconstruct the space-time: 
First, we note that as $J^+(\hat \mu(r))\cap J^-(p_-)\subset W$, we already have found the collection of the light observation sets
$\{\mathcal E_U(q): \ q\in J^+(\hat \mu(r))\cap J^-(p_-)\}$. Then, by using the above three observations and the proof of Theorem \ref{main thm for general manifold},
we can 
apply the induction  steps
in the proof of Theorem 4.5 in \cite{KLU2018} to recurrently reconstruct the sets
$\{\mathcal E_U(q): \ q\in J^+(\hat \mu(r_j))\cap J^-(p_+)\}$ with a sequence of values $r_j$, $j=1,2,\dots,J$,
where $r_0=r$, $r_{j+1}\in (r_j-\e,r_j)$, $r_J=s_-$, and $\e>0$ is small enough. Hence we can find  
the collection
$\{\mathcal E_U(q): \ q\in J^+(\hat \mu(s_-))\cap J^-(p_+)\}$.  
    Then, similarly to the proof of Theorem \ref{main thm for general manifold},
we can reconstruct the conformal type of the space-time $J^+(\hat \mu(s_-))\cap J^-(p_+)=J^+(p_-)\cap J^-(p_+)$.
By letting $s_-\to -\pi$ and $s_+\to \pi$, we find the conformal type of $N$

 As noted above, we will present the details of these constructions elsewhere, and do consider in this paper the
 space-times that are asymptotically Minkowskian and smooth near $i_0$, 
  in particular, as in the this case the direct problem can be solved without assuming that
  the spacetime is isometric to a given, a priori known space-time (e.g., the Schwartchild or the Minkowski space)
  in a neighborhood of $i_0$.

\appendix
\section{Energy inequality for a nonlinear wave equation}\label{sec:App_energy_inequality}

In this section we prove an a-priori energy inequality for the wave equation
$$
\square_g \psi + B\psi+ A\psi^\kappa=F.
$$
At this stage, the number $\kappa$ can be any positive integer greater than or equal to $1$.
We will prove the energy inequality in a Sobolev space $H^k$.

Below, we use time functions ${\bf t}_j:N_j\to \R$ defined in \eqref{time function tj} 
and the relatively compact sets $W$ and $W_0$ defined in \eqref{eq:domain_for_wave_eq} and \eqref{eq:domain_for_wave_eq B}
with $T_1<0<T_2$ chosen so that so that $\overline{\mathbb D_0}\subset W_0^{\mathrm{int}}.$
Note that the future part of the boundary of $W$ is the smooth  surface 
$$
\Sigma_{f}:= \{x\in N_1 \mid t_1(x) > T_1\}\cap \{x\in N_2 \mid t_2(x) = T_2\}
$$
and the past of the boundary is a subset of the union of $\nullinf^-$ and the Cauchy surface $\Sigma_{T_1}$, where $\nullinf^-$ is as in~\eqref{def:null_infinities}.
Let us denote the light-like part of the boundary of $W$ by
$$
\nullinf^-(T_1) := \nullinf^- \cap\{x\in N_1\mid {\bf t}_1(x)>T_1\}.
$$
Now $W$ is foliated by the space-like surfaces
$$
\Sigma_t:=\{ x\in W\mid {\bf t_2}(x)=t\}. 
$$
Note that the surfaces $\Gamma:=\{{x\in N_1\mid\bf t_1}(x)=T_1\} \cap \{x\in N_2\mid{\bf t_2}(x)=t\}$ are not necessarily smooth but we are going to consider the initial and boundary values which will imply that the solution $\psi$ vanishes identically near $\{x\in N_1\mid{\bf t_1}(x)=T_1\}$. Thus the analysis of the solution $\psi$ near $\Gamma$ does not pose a problem.


We denote 
\begin{equation*}
Z^k_0(W)=\{F\in H^k(I^-(W))\mid F(x)=0\hbox{ for }x\in J^-(
(\nullinf^-\cup \Sigma_{T_1})\cap W\}.    
\end{equation*}
We are now ready to state the main result of this section.

\begin{proposition}\label{prop:energy_inequality}
Suppose $\psi \in  H^{k+1}(W)$, where $k+1>2$ satisfies\
$$
\square_g \psi + B\psi + A\psi^\kappa=F,\quad\text{on } W
$$
where $A,B\in C^k(W)$,
$F\in Z^k_0(W)$ and $\kappa\geq 1$.
Assume $\supp(F) \subset I^+(W)$ is compact and that there exists an open neighbourhood $U$ of $\{x\in N_1\mid{\bf t_1}(x)=T_1 \}\cup\{i_0\}$ such that $U\cap\supp(F)=\emptyset$ and $\psi\equiv 0$ in $U$. 
Then there is $\eps>0$ such that if
$$
\Vert F \Vert_{H^{k}(W)}
+
\Vert \psi\Vert_{H^{k+1}(\nullinf^-(T_1))}<\eps,
$$
then for any $\ell=0,\ldots,k$,
\begin{equation}\label{eq: energy ineq on Cauchy}
\Vert \p_t^\ell \psi \Vert_{H^{k-\ell+1}(\Sigma_{t})}
+
\Vert \p_t^{\ell+1}\psi \Vert_{H^{k-\ell}(\Sigma_{t})}
\leq C \Big(\Vert F \Vert_{H^{k}(W)}
+
\Vert \psi\Vert_{H^{k+1}(\nullinf^-(T_1))}
\Big)  
\end{equation}
for $T_1\leq t\leq T_2$.
If $\kappa=1$ we do not need to assume the smallness of the norms of $F$ and the initial values.
Moreover, similar results follow when $W$ is replaced by $W_0$.
\end{proposition}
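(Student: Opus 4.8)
The plan is to run the standard vector-field (multiplier) energy method adapted to the foliation $\{\Sigma_t\}_{T_1\le t\le T_2}$ of $W$ by the spacelike slices $\Sigma_t=\{x\in W\mid {\bf t}_2(x)=t\}$, and then to bootstrap to arbitrary order by differentiating the equation. The two features that require care are the characteristic (null) portion $\nullinf^-(T_1)$ of the past boundary and the Lipschitz corner at $i_0$; the latter is neutralized at the outset by the hypotheses that $\psi\equiv 0$ on the neighbourhood $U$ of $\{{\bf t}_1=T_1\}\cup\{i_0\}$ and $\supp(F)\cap U=\emptyset$, so that every integration by parts below takes place on a region whose boundary is, wherever $\psi$ is nonzero, either the smooth spacelike slice $\Sigma_t$ or the smooth null piece $\nullinf^-(T_1)$. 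First I would form the stress-energy tensor $T_{\mu\nu}[\psi]=\partial_\mu\psi\,\partial_\nu\psi-\tfrac12 g_{\mu\nu}\,g^{\alpha\beta}\partial_\alpha\psi\,\partial_\beta\psi$, contract it with the future-directed timelike vector field $X$ dual to $d{\bf t}_2$, and use the identity $\mathrm{div}(T[\psi]\!\cdot\! X)=(\square_g\psi)\,X\psi+\tfrac12 T^{\mu\nu}(\mathcal{L}_X g)_{\mu\nu}$. Integrating over $W_t:=W\cap\{{\bf t}_2<t\}$ and applying the divergence theorem produces the energy $E(t)=\int_{\Sigma_t}T(X,\nu)\,dS$, which by the timelike character of $X$ and of the normal $\nu$ is coercive and equivalent to $\|\psi\|_{H^1(\Sigma_t)}^2+\|\partial_t\psi\|_{L^2(\Sigma_t)}^2$, minus a flux through $\nullinf^-(T_1)$; the $\Sigma_{T_1}$ and corner contributions vanish since $\psi\equiv 0$ there.

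To close the base estimate, I would arrange the null flux to enter with a favourable sign and bound it by the intrinsic Goursat data $\|\psi\|_{H^1(\nullinf^-(T_1))}$, and then substitute $\square_g\psi=F-B\psi-A\psi^\kappa$ to rewrite the bulk term as $\int_{W_t}(F-B\psi-A\psi^\kappa)\,X\psi$ plus the deformation term $\tfrac12\int_{W_t}T^{\mu\nu}(\mathcal{L}_Xg)_{\mu\nu}$. The deformation and $B\psi$ contributions are $O(E(t))$ and are absorbed by Gr\"onwall's inequality, while the source contributes $\|F\|_{L^2(W)}$. For the nonlinearity I would use the Sobolev embedding $H^{k+1}(\Sigma_t)\hookrightarrow L^\infty(\Sigma_t)$ (valid on the three-dimensional slices since $k+1>3/2$) to estimate $|A\psi^\kappa\,X\psi|\le C\|\psi\|_{L^\infty(\Sigma_t)}^{\kappa-1}|\psi|\,|X\psi|$; a continuity (bootstrap) argument using the smallness hypothesis keeps $\|\psi\|_{L^\infty}$ small enough that this term is dominated by $\e\,E(t)$ and hence absorbed as well. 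Gr\"onwall then yields $E(t)\le C\big(\|F\|_{L^2(W)}+\|\psi\|_{H^1(\nullinf^-(T_1))}\big)^2$, which is the base ($k=0,\ \ell=0$) case.

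To reach general $k$ and $\ell$ I would differentiate the equation by the coordinate fields tangent to the foliation and by $\partial_t$, commute them through $\square_g+B$, and apply the base identity to $\partial^\alpha\psi$. The commutators $[\square_g,\partial^\alpha]$ and the lower-order terms are controlled inductively in terms of the energies of order $<|\alpha|$, whereas the differentiated nonlinearity is handled by the tame (Moser) product inequality $\|A\psi^\kappa\|_{H^k(\Sigma_t)}\le C(\|\psi\|_{L^\infty(\Sigma_t)})\|\psi\|_{H^k(\Sigma_t)}$, again absorbed by smallness. Summing the resulting slice estimates over $|\alpha|\le k$ and using the equation itself to trade time derivatives for spatial ones then delivers \eqref{eq: energy ineq on Cauchy} in the stated mixed $C^\ell H^{k-\ell}$ norms. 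When $\kappa=1$ the term $A\psi^\kappa$ is linear, so no smallness is needed and the same argument applies verbatim; replacing $W$ by $W_0$ changes nothing, giving the final assertion.

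The main obstacle I expect is the treatment of the characteristic boundary $\nullinf^-(T_1)$: unlike a spacelike slice, the energy flux through a null hypersurface is degenerate, and one must verify both that it enters the identity with the correct sign and that it is genuinely controlled by the intrinsic $H^{k+1}$-norm of the Goursat data on $\nullinf^-$ rather than by transverse derivatives, which are not prescribed. For the base level this control can be extracted from the linear Goursat results of Nicolas and from H\"ormander's theory cited above; propagating it to all orders while keeping the constants uniform up to (but excluding) the corner $i_0$---where the support hypotheses on $\psi$ and $F$ are precisely what rescue the argument---is the technical heart of the proof.
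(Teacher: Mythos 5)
Your overall strategy coincides with the paper's: commute derivatives through the equation, run the stress--energy (multiplier) method with a timelike vector field adapted to the foliation $\Sigma_t$, exploit the sign and purely tangential character of the flux through $\nullinf^-(T_1)$ (the paper verifies exactly this via the identity $Q[\psi_{\ell,k}](V,n)=(n\psi_{\ell,k})^2+(Y_1\psi_{\ell,k})^2+(Y_2\psi_{\ell,k})^2$ with $n,Y_1,Y_2$ tangent to the null surface, together with Aretakis' positivity lemma), use the algebra/Moser property of Sobolev spaces for the differentiated nonlinearity, and close with a Gr\"onwall-type argument. So the route is essentially the paper's.

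There is, however, one genuine gap. You assert that the energy $E(t)=\int_{\Sigma_t}T(X,\nu)\,dS$ is ``coercive and equivalent to $\Vert\psi\Vert_{H^1(\Sigma_t)}^2+\Vert\partial_t\psi\Vert_{L^2(\Sigma_t)}^2$.'' This is false: $T(X,\nu)$ is a quadratic form in the \emph{first derivatives} of $\psi$ only, so $E(t)$ controls $\sum_j\Vert\partial_j\psi\Vert_{L^2(\Sigma_t)}^2$ but gives no control of $\Vert\psi\Vert_{L^2(\Sigma_t)}$. This matters because the bulk terms you must absorb by Gr\"onwall --- $\int B\psi\,X\psi$, $\int A\psi^\kappa\,X\psi$, and, after commutation, all the lower-order terms in the commutator remainder --- contain undifferentiated $\psi$, so the estimate does not close with $E(t)$ alone. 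The paper devotes a separate step to precisely this point: it integrates $\psi^2$ along the flow of $\nabla{\bf t}_2$ back to the past boundary (where $\psi$ vanishes or equals the Goursat data), using the fundamental theorem of calculus and the co-area formula, to obtain $\Vert\psi\Vert_{L^2(\Sigma_t)}^2\lesssim \Vert\psi\Vert_{H^{k+1}(\nullinf^-(T_1))}+\int_{T_1}^{T_2}f(s)\,ds$. You need this (or an equivalent Poincar\'e-type argument exploiting the vanishing of $\psi$ near $\{{\bf t}_1=T_1\}$) before any Gr\"onwall argument can run.

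A minor methodological difference: for the nonlinearity you propose a continuity/bootstrap argument keeping $\Vert\psi\Vert_{L^\infty}$ small and then a linear Gr\"onwall inequality, whereas the paper keeps the superlinear term $f(s)^{2\kappa}$ and applies a nonlinear (Bihari-type) Gr\"onwall inequality, whose conclusion holds exactly when the data norm $u_0$ is small. Both closures are legitimate; the paper's version makes the role of the smallness hypothesis (and its superfluousness when $\kappa=1$) explicit in a single inequality, while your bootstrap would additionally require spelling out the continuity-of-the-maximal-interval argument.
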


\begin{remark}\label{rem: energy over W}
Letting  $\nabla {\bf t}_2$ be the smooth time-like normal vector field along $\Sigma_t$ induced by ${\bf t}_2$, summing over $\ell=0,\ldots,k$, and integrating \eqref{eq: energy ineq on Cauchy} over $t\in[T_1,T_2]$ yields
\begin{align*}
\Vert \psi \Vert_{H^{k+1}(W)}^2
& 
\leq 
C \sum_{\ell=0}^k\int_{T_1}^{T_2}
\left(
\Vert \p_t^\ell \psi \Vert_{H^{k-\ell+1}(\Sigma_{t})}^2
+
\Vert \p_t^{\ell+1}\psi \Vert_{H^{k-\ell}(\Sigma_{t})}^2
\right)|\nabla t_2|
dt
\\
&\leq
C (T_2-T_1)(\ell+1) \Big(\Vert F \Vert_{H^{k}(W)}^2
+
\Vert \psi\Vert_{H^{k+1}(\nullinf^-(T_1))}^2
\Big).   
\end{align*}
\end{remark}

To work in Sobolev spaces, we will construct a finite collection of vector fields on $W$ that span the tangent spaces of $W$ at all points.
Consider a finite open cover of the compact set $\overline{W}$ by coordinate charts $\{(U_j,\varphi_j)\}_{j=1}^J$.
Let $p\in \overline{W}$. Then $p\in U_j$ for some $j\in\{1,\ldots,J\}$. Let $V_p$ be an open set with compact closure such that $\overline{V}_p\subset U_j$.
Now the sets $V_p$ form an open cover of $\overline{W}$, so there is a finite subcover $\{V_k\}_{k=1}^K$.
By construction $V_k\subset U_{j_k}$ for some $j_k$, so also the charts $(V_k,\varphi_{j_k})$ form an atlas of $\overline{W}$.
Let $\chi_k\in C^\infty(\Next)$ be such that
$$
\chi_k(x)=
\begin{cases}
1,& x\in V_k,\\
0,& x\not\in U_{j_k}.
\end{cases}
$$
Let $\p_i$, $i=1,2,3,4$, denote the coordinate vector fields of $U_{j_k}$.
We can now define
$$
X_i = \chi_k(x) \p_i,
$$
so that $X_i=\p_i$ in $V_k$ and $X_i=0$ in $W\setminus U_{j_k}$.
Doing this for all of the finitely many sets $V_k$, $k=1,\ldots,K$ yields (at most) $4K$ vector fields $X_i$, such that
\begin{equation}\label{eq:coordinates_on_R}
T_p W = \mathrm{span}\{ X_{i_1},X_{i_2},X_{i_3},X_{i_4} \mid \text{for some } 1\leq i_1<i_2<i_3<i_4\leq 4K\}   
\end{equation}
for all $p\in W$. We will let $X_{i_1}=\p_t$ be the vector along the time-direction, that is, $X_{i_1}=\p_t$.
It can be shown that
$$
\Vert \nabla^j f \Vert_{L^2(W)}\lesssim \sum_{l=0}^j\sum_{i=1}^{4K}\Vert X_i^l f\Vert_{L^2(W)} 
$$

\begin{proof}[Proof of Proposition~\ref{prop:energy_inequality}]
Let $\psi\in C^\infty(\Next)$.
We will prove the energy inequality up to $t\in [T_1,T_2]$. Let
$$
W_t = \{ x\in N_1\mid {\bf t}_1(x)>T_1\} \cap \{ x\in N_2\mid {\bf t}_2(x)<t\} \setminus {{{N}}}^-
$$
and note $W=W_{T_2}$.
The space-like future boundary of $W_t$ is given by $\Sigma_t$.
To obtain estimates in the Sobolev spaces it suffices to consider  $\psi\in C^\infty(W)$, and to find estimates in $L^2$ in terms of the vector fields $X_i$.
Let $\p_i \in T_pW$. Then a direct calculation using Leibniz rule shows that the following equalities hold for the commutators:
\begin{equation}\label{eq:commutator_relations}
\begin{split}
&[\p_i^k,\square ]\psi =
 \sum_{|\alpha|\leq k+1} a_\alpha \p^\alpha \psi,\quad [\p_i^k ,B]\psi = \sum_{|\alpha|< k} b_\alpha \p^\alpha \psi,\\
&\p_i^k (A\psi^\kappa) = 
\sum_{j_1+\ldots+j_{\kappa+1}=k}
\binom{k}{j_1,\ldots,j_{\kappa+1}} \Big(\partial_i^{j_{\kappa+1}}A\Big)
\prod_{l=1}^{\kappa} \partial_i^{j_l}\psi\\
& = \kappa A\psi^{\kappa-1} \p_i^k\psi+
\sum_{\substack{j_1+\ldots+j_{\kappa+1}=k\\j_{\kappa+1}\geq 1}}
\binom{k}{j_1,\ldots,j_{\kappa+1}} \Big(\partial_i^{j_{\kappa+1}}A\Big)
\prod_{l=1}^{\kappa} \partial_i^{j_l}\psi,
\end{split}
\end{equation}
where the coefficient functions $a_\alpha$ are smooth and $b_\beta\in C^{k-|\alpha|}(W)$.
We will now consider
$$
P^{k-\ell}\p_t^\ell\psi := \sum_{|\alpha|\leq k-\ell} X^\alpha\p_t^\ell \psi,\quad 0\leq \ell\leq k,
$$
which is a differential operator of order $k$, where $X\in\Gamma \Sigma_t$ are vector fields along the space-like surfaces $\Sigma_t$ and $\p_t$ the vector field of ${\bf t}$. To simplify the notation, in the sequel we will denote
\begin{equation}\label{eq: simplify notation of derivatives}
    \psi_{\ell,k} \vcentcolon = P^{k-\ell}\p_t^\ell\psi.
\end{equation}
Then, since $\square \psi + B\psi + A\psi^\kappa=F$, using the standard multi-index notation for $\alpha$, we have
\begin{align*}
P^{k-\ell}\p_t^\ell F&=P^{k-\ell}\p_t^\ell(\square \psi + B\psi + A\psi^{\kappa}) \\
&= \square (\psi_{\ell,k}) + \mathcal{R}[\psi],
\end{align*}
where $\mathcal{R}[\psi]$ denotes the lower order terms:
\begin{align*}
\mathcal{R}[\psi]&:=
\sum_{|\alpha|\leq k+1} a_\alpha \p^\alpha \psi
+ B\psi_{\ell,k} + \sum_{|\alpha< k} b_\alpha \p^\alpha \psi\\
&\qquad
+ A\psi^{\kappa-1} \tilde P^k\psi 
+ \sum_{i=0}^3\sum_{\substack{j_1+\ldots+j_{\kappa+1}=k\\j_{\kappa+1}\geq 1}}
c_{j_1,\ldots,j_{\kappa+1}} \prod_{l=1}^{\kappa} \partial_i^{j_l}\psi
\end{align*}
for some continuous functions $a_\alpha$, $b_\beta$ and $c_\delta$ and a differential operator $\tilde P^k$ of order $k$ with smooth coefficients.
Since 
the space $H^k$ is an algebra, when $k>n/2$, we have
\begin{equation}\label{eq:sobo_integral}
\int_{W_t} \mathcal{R}[\psi]^2 \d V_\gext \lesssim 
\Vert \psi \Vert_{H^{k+1}(W_t)}^2
+
\Vert \psi \Vert_{H^{k+1}(W_t)}^{2\kappa}.
\end{equation}


To work with the wave equation and Stokes' theorem, it is convenient to define the \emph{energy-momentum tensor} associated to a smooth function $\psi$ to be the $(0,2)$-tensor field
\begin{equation}\label{eq:energymomentumtensor}
Q[\psi] := d\psi \otimes d\psi -\frac{1}{2}\gext^{-1}(d\psi,d\psi)\cdot \gext.
\end{equation}
It can be shown that
$$
\div(Q[\psi]) = (\square_\gext \psi)d\psi,
$$
where the divergence is defined via $\div(V)=(\nabla^i V)_i$.
We use the following lemma of Aretakis~\cite{Aretakis}:
\begin{lemma}\label{lemma:posdef}
If $V_1,V_2$ are future-directed time-like vector fields, then the energy-momentum tensor is positive-definite, that is
$$
Q[\psi](V_1,V_2) \geq C \sum_{j=0}^3 (\partial_j \psi)^2,\quad C>0.
$$
\end{lemma}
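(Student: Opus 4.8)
The statement is pointwise and tensorial, and by homogeneity of $Q[\psi]$ in each slot it suffices to treat future-directed \emph{unit} time-like $V_1,V_2$, so the plan is to fix a point $p\in\overline W$ and verify the inequality as an algebraic fact about the quadratic form $d\psi\mapsto Q[\psi](V_1,V_2)$. Writing $S:=\gext^{-1}(d\psi,d\psi)$ and contracting \eqref{eq:energymomentumtensor} gives
$$Q[\psi](V_1,V_2)=V_1(\psi)\,V_2(\psi)-\tfrac12 S\,\gext(V_1,V_2).$$
This is exactly the dominant energy condition for the stress-energy tensor of a scalar field; I would prove it by a causality argument rather than by a direct sign chase on the right-hand side, which is inconclusive since $S$ has no fixed sign.

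The heart of the matter is the vector field $T:=\bigl(Q[\psi](V_1,\cdot)\bigr)^\sharp$ obtained by raising one index, namely $T=V_1(\psi)\,\nabla\psi-\tfrac12 S\,V_1$ with $\nabla\psi=(d\psi)^\sharp$. Using $\gext(\nabla\psi,\nabla\psi)=S$, $\gext(\nabla\psi,V_1)=V_1(\psi)$ and $\gext(V_1,V_1)=-1$, a one-line computation gives $\gext(T,T)=-\tfrac14 S^2\le 0$, so $T$ is causal, while $\gext(T,V_1)=Q[\psi](V_1,V_1)=V_1(\psi)^2+\tfrac12 S$. Taking the orthonormal frame with $e_0=V_1$ shows $Q[\psi](V_1,V_1)=\tfrac12\sum_{\mu=0}^3(\partial_\mu\psi)^2\ge 0$, the weak energy condition. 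Hence $\gext(T,V_1)\ge 0$ with $V_1$ future time-like, which forces $T$ to be \emph{past}-directed causal; then $-T$ is future-causal and $V_2$ is future time-like, so $\gext(-T,V_2)\le 0$, i.e. $Q[\psi](V_1,V_2)=\gext(T,V_2)\ge 0$. This is the semidefinite form of the lemma.

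To upgrade this to the coercive bound $Q[\psi](V_1,V_2)\ge C\sum_j(\partial_j\psi)^2$, I would first check that the form is positive-\emph{definite} in $d\psi$: if $\gext(T,V_2)=0$ with $V_2$ time-like and $T$ causal then $T=0$, whence $\gext(T,T)=-\tfrac14 S^2=0$ gives $S=0$; pairing $T=0$ with $V_1$ gives $V_1(\psi)=0$, so $\nabla\psi$ is null and $\gext$-orthogonal to the time-like $V_1$ and therefore vanishes, i.e. $d\psi=0$. Thus for each fixed future time-like pair the map $d\psi\mapsto Q[\psi](V_1,V_2)$ is a positive-definite quadratic form; since $\overline W$ is compact and one applies the lemma with $V_1,V_2$ drawn from a compact family of time-like directions (e.g.\ $V_1=V_2=\nabla{\bf t}_2$), continuity of the least eigenvalue yields a uniform constant $C>0$, which is the claim. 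The only delicate points are the causality bookkeeping---tracking future versus past orientation of $T$ to pin down the sign---and the strictness needed for $C>0$; both follow from the elementary facts that a nonzero causal vector cannot be $\gext$-orthogonal to a time-like vector and that the $\gext$-product of two future-causal vectors is nonpositive. No smallness or nonlinear structure of the equation is used, so the estimate holds for arbitrary smooth $\psi$.
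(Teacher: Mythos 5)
Your proof is correct, but it cannot be ``the same as the paper's'' because the paper gives no proof at all: Lemma~\ref{lemma:posdef} is quoted from Aretakis~\cite{Aretakis}, so your argument fills in what the paper outsources. The proof behind that citation is the standard null-frame computation: decompose with respect to a frame $\{L,\underline{L},e_1,e_2\}$ with $L,\underline{L}$ future null, note that $Q(L,L)=(L\psi)^2$, $Q(\underline{L},\underline{L})=(\underline{L}\psi)^2$ and $Q(L,\underline{L})=(e_1\psi)^2+(e_2\psi)^2$, and write the future time-like pair $(V_1,V_2)$ as positive combinations of null directions, which exhibits $Q[\psi](V_1,V_2)$ directly as a positive-definite sum of squares together with an explicit coercivity constant. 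Your route is instead the dominant-energy-condition argument: you raise an index to get $T=(Q[\psi](V_1,\cdot))^\sharp$, verify $\gext(T,T)=-\tfrac14 S^2\le 0$ so $T$ is causal, fix its time orientation using the weak energy condition $Q[\psi](V_1,V_1)=\tfrac12\sum_\mu(e_\mu\psi)^2\ge 0$, and conclude nonnegativity by pairing the (past-directed) $T$ with the future time-like $V_2$; strict definiteness then follows from the two Lorentzian facts you invoke (a nonzero causal vector has nonzero inner product with every time-like vector, and a null vector orthogonal to a time-like vector vanishes), and the uniform constant $C>0$ from compactness of $\overline W$ and of the family of directions. All your computations check out, including the degenerate-case analysis forcing $d\psi=0$. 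The trade-off: the frame computation is more quantitative and gives the constant with no compactness argument, while your proof is coordinate-free and isolates the geometric mechanism (causality of the momentum vector) behind the inequality. Your closing remark is also the right one to make explicit, since the right-hand side $\sum_{j=0}^3(\partial_j\psi)^2$ is coordinate-dependent: the constant $C$ necessarily depends on the atlas and on uniform time-likeness bounds for $V_1,V_2$, which is exactly how the paper applies the lemma (with the fields $V$ and the normals $n_t$ over the compact set $\overline W$).
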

Let us then contract $Q[\psi_{\ell,k}]$ by a vector field $V$ of $\Next$ and 
take the divergence as
\begin{equation}\label{eq:divJ}
\div(Q[\psi_{\ell,k}]V) = \div(Q[\psi_{\ell,k}])V +   \frac{1}{2}Q[\psi_{\ell,k}]_{ij} \pi^{ij}_V
\end{equation}
where $(\nabla V)^{ij} = (g^{ki} \nabla_k V)^j = (\nabla^i V)^j$
and 
$\pi^{ij}_V = (\nabla^i V)^j + (\nabla^j V)^i$ is the deformation tensor of $V$.
Integrating \eqref{eq:divJ} over the domain $W$ in $\Next$, by Stokes' theorem, 
\begin{multline}\label{eq:stokes}
\int_{W_t} ((\square \psi_{\ell,k})(V \psi_{\ell,k})  \frac{1}{2}Q[\psi_{\ell,k}]_{ij} \pi^{ij}_V )\d V_\gext
=
\int_{\p {W_t}} Q[\psi_{\ell,k}](V,n) \d (\p W_t),
\end{multline}
where $n$ is the normal vector to the boundary $\p {W_t}$ and $\d (\p W_t)$ is the induced volume on the boundary.
We remind the reader that the boundary of $W_t$ is not necessarily a smooth surface, particularly near $\Gamma=\{{\bf t_1}(x)=T_1\}\cap \{{\bf t_2}(x)=t\}$, but since $\psi$ vanishes identically near $\Gamma$ Stokes' theorem remains valid. 
We will analyse the form of the volume form on the boundary below.
Let now $V$ be a time-like vector field and let
$$
f(t) \vcentcolon= \int_{\Sigma_t} Q[\psi_{\ell,k}](V,n_t)\d\Sigma_t,
$$
where $n_t$ is the future-directed normal vector of $\Sigma_t$ and $\d\Sigma_t$ is a Riemannian volume form on $\Sigma_t$.
By Lemma~\ref{lemma:posdef}, we know
\begin{equation}\label{eq:fsim}
c\int_{\Sigma_t} \sum_{j=0}^3 (\partial_j \psi_{\ell,k})^2\d\Sigma_t \leq f(t) \leq C \int_{\Sigma_t} \sum_{j=0}^3 (\partial_j \psi_{\ell,k})^2\d\Sigma_t.
\end{equation}
Let
$
\nullinf^-(T_1,t) = \nullinf^- \cap \{ x\in N_1 \mid  T_1 <{\bf t}_1(x) < t\}.
$
By Stokes' theorem and \eqref{eq:stokes} we have
\begin{align*}
&\int_{W_t} (V\psi_{\ell,k})(\mathcal{R}[\psi]-P^{k-\ell}\p_t^\ell F) \d V_\gext + \int_{W_t} Q[\psi_{\ell,k}](\nabla V)\d V_\gext \\
&=
\int_{W_t} \div( Q[\psi_{\ell,k}](V) )\d V_\gext\\
 &=  -\int_{\Sigma_t} Q[\psi_{\ell,k}](V,n)\d\Sigma_t
 + f(T_1) + \int_{\nullinf^-(T_1,t)} Q[\psi_{\ell,k}](V,n) \d\nullinf^-
\end{align*}
where $n$ is the future directed normal vector of $\p {W_t}$ and $\d\nullinf^-$ is a volume element on the null surface $\nullinf^-(T_-,t)$.
Rearranging, we get
\begin{multline}\label{eq:first_energy}
\int_{\Sigma_t} Q[\psi_{\ell,k}](V,n) \d\Sigma_t
= f(T_1) - \int_{W_t} (V\psi_{\ell,k})(\mathcal{R}[\psi]-P^{k-\ell}\p_t^\ell F)\d V_\gext \\
- \int_{W_t} Q[\psi_{\ell,k}](\nabla V)\d V_\gext + \int_{\nullinf^-(T_1,t)} Q[\psi_{\ell,k}](V,n)\d\nullinf^-.
\end{multline}
Because $V$ and $n$ are time-like then in view of Lemma~\ref{lemma:posdef}
$$
\int_{\Sigma_t} |Q[\psi_{\ell,k}](\nabla V)|d\Sigma_t \leq Cf(t).
$$
Recall that $W$ is foliated by the space-like surfaces $\Sigma_t = \{ x\in N_1\mid {\bf t}_2(x)=t\}$ and $\nabla {\bf t}_2$ is a smooth time-like normal vector field along $\Sigma_t$.
By the smooth co-area formula, see \cite{Isaac},
we then find
$$
\int_{T_1}^{t} \int_{\Sigma_t} |Q[\psi_{\ell,k}](\nabla V)|\d\Sigma_t\d t
=
\int_{W_t} |Q[\psi_{\ell,k}](\nabla V)| |\nabla {\bf t}_2|\d V_\gext\leq
C \int_{T_1}^{t} f(s)ds
$$
where we use the compactness of $W$ and that  $c\leq |\nabla {\bf t}_2|\leq C$ in $W$ for some constants $C,c>0$.
On the other hand, by Cauchy-Schwarz inequality and the co-area formula
\begin{align*}
&\int_{W_t} |\mathcal{R}[\psi]\cdot  V \psi_{\ell,k}|\d V_\gext\\
&\leq 
C\int_{T_1}^{t}\Vert \mathcal{R}[\psi] \Vert_{L^2(\Sigma_s)}   \Vert V\psi_{\ell,k} \Vert_{L^2(\Sigma_s)}ds\\
&\leq
C
\int_{T_1}^{t}\Big(\Vert \p_t^\ell\psi \Vert_{H^{k-\ell+1}(\Sigma_s)} 
+
\Vert \p_t^{\ell+1}\psi \Vert_{H^{k-\ell}(\Sigma_s)}
+ \\
&\qquad (\Vert \p_t^\ell\psi \Vert_{H^{k-\ell+1}(\Sigma_s)}
 +
 \Vert \p_t^{\ell+1}\psi \Vert_{H^{k-\ell}(\Sigma_s)})^{2\kappa}
+
 f(s)
\Big)ds,
\end{align*}
where we used \eqref{eq:sobo_integral} and \eqref{eq:fsim}.
Similarly,
$$
\int_{W_t} |P^{k-\ell}\p_t^\ell F\cdot V\psi_{\ell,k}|\d V_g \leq
C \left(\Vert F \Vert_{H^{k}({W_t})} + \int_{T_1}^{t} f(s)ds\right)
$$
It remains to analyse the last integral in the formula  \eqref{eq:first_energy}.
The integral over this null surface can be understood as follows.
Consider a non-vanishing null vector field $n \in \Gamma(T \nullinf^-(T_1))$.
Then one can find a unique function $\lambda:\nullinf^-(T_1) \to \R$ solving the differential equation $d\lambda(n) = 1$ and $\lambda(x) = 1$, when $x\in \Sigma_{T_1}\cap \nullinf^-(T_1)$.
Now, restricting the Lorentzian metric $g$ to the null surface $\nullinf^-(T_1)$ and further restricting to a level set of $\lambda$ shows that $\widetilde g := g\big|_{\nullinf^-(T_1),\lambda}$ is a Riemannian metric on the space-like submanifolds
$$
\nullinf^-(T_1) \cap \{ x\in \nullinf^-(T_1)\mid \lambda(x) = t \}.
$$
Let $\omega$ be the volume form with respect to $\widetilde g$.
Then $\d\nullinf^- := d\lambda \wedge \omega$ is a volume form on $\nullinf^-(T_1)$.
We note that the volume form is uniquely determined after a choice of an affine vector field.
Since $V$ is time-like and $n$ is null, we have
$$
Q[\psi_{\ell,k}]_{ab}V^an^b = (n \psi_{\ell,k})^2 + (Y_1\psi_{\ell,k})^2 + (Y_2\psi_{\ell,k})^2,
$$
where $Y_1,Y_2\in \Gamma ( T\nullinf^-(T_1))$ are space-like and 
\[
\mathrm{span}(V(p),n(p),Y_1(p),Y_2(p))=T_pW.
\]
The information of $\psi$ to transversal directions is absent here and thus on the null surface it is enough to know only derivatives of $\psi$ to directions not transverse to $\nullinf^-$.
It follows from Lemma~\ref{lemma:posdef} by continuity that
$
Q[\psi_{\ell,k}](V,n) \geq 0.
$
Therefore
$$
0\leq \int_{\nullinf^-(T_1)} Q[\psi_{\ell,k}](V,n)\d\nullinf^-
\leq C \Vert \psi \Vert_{H^{k+1}(\nullinf^-(T_1))}.
$$
\subsubsection{Combination of estimates}
We have showed that
\begin{multline}\label{eq:energy1}
f(t)
 \leq f(T_1)
+
C_0 \left(
\Vert F \Vert_{H^{k}(W)}
+
\Vert \psi \Vert_{H^{k+1}(\nullinf^-(T_1))}
\right)\\
+
 C_1 \int_{T_1}^{T_2-\delta} f(s)\d s + C_2 \int_{T_1}^{T_2-\delta} f(s)^{2\kappa} \d s.
\end{multline}
As we want a bound for the Sobolev $H^k$ norm of $\psi$ and the estimate \eqref{eq:energy1} does not include $\Vert \psi \Vert_{L^2(W)}$ on the left-hand side, we need to find suitable estimate for that.

Let $\phi_s$ be the (smooth, global) flow of $\nabla{\bf t}_2$ to the future direction and let $\phi:\R\times N_2 \to N_2$ be the smooth map $\phi(s,x)=\phi_s(x)$.
Let $x\in \Sigma_t$.
By using the fundamental theorem of calculus, we see that
\begin{align}\label{eq:int1}
\psi(\phi_{t_2}(x))^2 
&\leq
2\psi(\phi_{t_1}(x))^2 + 2(t_2-t_1) \int_{t_1}^{t_2} (\p_s\psi(\phi_s(x)))^2 d s
\end{align}
for all $t_1\leq t_2$.

Let $\theta:\Sigma_t \to \R$ be the map
$$
\theta(x) = \{ t\in\R\mid \phi_t(x)\in \Sigma_{T_1}\cup\nullinf^-(T_1)\}.
$$
Note that $\theta$ is well-defined, because $\Sigma_{T_1}\cup \nullinf^-(T_1)$ and $\Sigma_t$ are achronal and each integral curve of $\nabla {\bf t}_2$ starting from $\Sigma_t$ intersects $\Sigma_{T_1}\cup \nullinf^-(T_1)$ once.
Thus, we see that $\theta(x)\leq 0$ for all $x\in \Sigma_t$ and hence by replacing $t_2$ by $0$ and $t_1$ by $\theta(x)$ in \eqref{eq:int1}, we get
\begin{align}\label{eq:int2}
\psi(x)^2 
\leq
2\psi(\phi(\theta(x),x))^2 + 2\theta(\phi(\theta(x),x)) \int_{\theta(x)}^{0} (\p_s\psi(\phi_s(x)))^2 d s.
\end{align}
Here the map $\phi(\theta(x),x)$ is, in fact, one-to-one, because it is the projection from the space-like surface $\Sigma_t$ to the past boundary $\Sigma_{T_1}\cup \nullinf^-(T_1)$ along the integral curves of $\nabla {\bf t}_2$.
Integrating \eqref{eq:int2} over $\Sigma_t$ we obtain%
\begin{equation}\label{eq:L2a}
\begin{split}
\int_{\Sigma_{t}} \psi^2 \d\Sigma_t
&\lesssim
\int_{\Sigma_t} \psi(\theta(\phi(\theta(x),x))) d\Sigma_t
+
\int_{\Sigma_t}
\int_{\theta(x)}^{0} (\p_s\psi(\phi(s,x)))^2 d s
d\Sigma_t\\
&\lesssim
\int_{\nullinf^-(T_1)} \psi(x) d\nullinf^-
+
\int_{\Sigma_t}
\int_{\theta(x)}^{0} (\p_s\psi(\phi(s,x)))^2 d s
d\Sigma_t,
\end{split}
\end{equation}
where we used the fact that $\psi\equiv 0$ on $\Sigma_{T_1}$ and that $\phi(\theta(\cdot),\cdot))^*d\Sigma_t = h_1 d\Sigma_{T_1}$, when $\phi(\theta(x),x))\in \Sigma_{T_1}$ and $\phi(\theta(\cdot),\cdot))^*d\Sigma_t = h_2 d\nullinf^-(T_1)$, when $\phi(\theta(x),x))\in \nullinf^-(T_1)$, for some $h_1,h_2>0$.
This follows from the fact that $\Sigma_t,\Sigma_{T_1}$ and $\nullinf^-(T_1)$ are smooth surfaces and pullback takes top-rank differential form to another top-rank form.
Moreover, by continuity and compactness of $W$, there are $c,C>0$ such that $c \leq h_1,h_2\leq C$.

The last integral in \eqref{eq:L2a} can be estimated by using the co-area formula, 
\begin{equation}\label{eq:L2b}
\begin{split}
  \int_{\Sigma_t}
\int_{\theta(x)}^{0} (\p_s\psi(\phi(s,x)))^2 d s
d\Sigma_t
&\leq C\int_{W_t} |\nabla {\bf t}_2\psi(x)|^2 dV_\gext\leq 
\int_{T_1}^{T_2} f(t) d t.
\end{split}
\end{equation}
Here we also used that for $x_0 = \phi_{t_0}(x)$ we have $\p_s \psi(\phi(s,x))|_{s=t_0} 
= \nabla {\bf t}_2\psi(x_0)$, since $\nabla {\bf t}_2$ is the infinitesimal generator of $\phi_t$.

Combining \eqref{eq:energy1}, \eqref{eq:L2a} and \eqref{eq:L2b} we have that
\begin{multline}\label{eq:3}
f(t) 
\lesssim
\Vert F \Vert_{H^{k}(W)}
+
\Vert \psi\Vert_{H^{k+1}(\nullinf^-(T_1))}
+
\int_{T_1}^{T_2} f(t)dt
+
\int_{T_1}^{T_2} f(t)^{2\kappa}dt.
\end{multline}

To finish the proof we use the following nonlinear Gr\"onwall inequality,
see e.g.  \cite{Willet}.

\begin{proposition}\label{prop:gronwall}
Assume $u_0>0$ and $p\geq 0$, $p\neq 1$ and $v,w,u$ are non-negative continuous functions. Then
\begin{equation}\label{eq:ineqassumption}
u(t) \leq u_0 + \int_0^t \big(v(s)u(s) + w(s)u^p(s)\big) ds
\end{equation}
implies
\begin{equation}\label{eq:gronwall}
u(t) \leq e^{\int_0^t v(s)ds}
\left[
u_0^{1-p} + (1-p)\int_0^t w(s) e^{(p-1)\int_0^s v(r)dr} ds
\right]^\frac{1}{1-p}.
\end{equation}
If $u_0=0$ then, since the above holds for all $u_0=u_*>0$, by taking limit $u_0\to 0$ the above inequality still holds, provided the limit exists.
\end{proposition}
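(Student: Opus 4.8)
The plan is to reduce the integral inequality \eqref{eq:ineqassumption} to a scalar Bernoulli-type differential inequality for a majorant and then integrate it explicitly. First I would define the right-hand side
$$R(t) := u_0 + \int_0^t \big(v(s)u(s) + w(s)u(s)^p\big)\,ds,$$
so that $u(t)\le R(t)$ by hypothesis. Since $v,w,u\ge 0$, the function $R$ is nondecreasing and satisfies $R(t)\ge R(0)=u_0>0$; in particular $R$ stays strictly positive, which is exactly what allows the powers $R^p$ and $R^{1-p}$ to be formed unambiguously. Differentiating, $R'(t)=v(t)u(t)+w(t)u(t)^p$, and because $x\mapsto x^p$ is nondecreasing on $[0,\infty)$ for $p\ge 0$ and $u\le R$, I obtain the closed differential inequality $R'(t)\le v(t)R(t)+w(t)R(t)^p$.

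Next I would carry out the Bernoulli substitution $z(t):=R(t)^{1-p}$, legitimate since $R>0$ and $p\ne 1$. Dividing the differential inequality by $R^p$ and using $R^{-p}R'=(1-p)^{-1}z'$ gives $(1-p)^{-1}z'\le vz+w$. At this point the sign of $1-p$ must be tracked: multiplying through by $(1-p)$ preserves the inequality when $p<1$ and reverses it when $p>1$. In either case I would then multiply by the integrating factor $\mu(t)=\exp\big(-(1-p)\int_0^t v\big)$ to rewrite the inequality as a monotone statement about $\tfrac{d}{dt}(\mu z)$, integrate from $0$ to $t$ using $z(0)=u_0^{1-p}$, and solve for $z(t)$. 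In the case $p<1$ this produces the upper bound
$$z(t)\le e^{(1-p)\int_0^t v}\Big[u_0^{1-p}+(1-p)\int_0^t w(s)e^{(p-1)\int_0^s v(r)\,dr}\,ds\Big],$$
and in the case $p>1$ the reversed (lower) bound with $\ge$ in place of $\le$.

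Finally I would invert the substitution via $R=z^{1/(1-p)}$. Here the sign of $1-p$ reappears but conveniently cancels the direction of the previous step: when $p<1$ the exponent $1/(1-p)$ is positive, so the upper bound on $z$ carries over to $R$, while when $p>1$ it is negative, so the reversed bound on $z$ again yields an \emph{upper} bound on $R$. Using $\big(e^{(1-p)\int_0^t v}\big)^{1/(1-p)}=e^{\int_0^t v}$ together with $u\le R$ then gives exactly \eqref{eq:gronwall}. For the degenerate case $u_0=0$ I would apply the inequality already established for each $u_0=u_*>0$ and let $u_*\to 0^+$, which is valid precisely when the resulting limit of the right-hand side exists, as stated.

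The main obstacle I anticipate is the careful bookkeeping of the sign of $1-p$: it flips the differential inequality under $z=R^{1-p}$ and flips again under the inverse power $z^{1/(1-p)}$, and I must verify these two flips are consistent so that the conclusion is an upper bound on $R$ in both regimes. A secondary point, relevant for the application in \eqref{eq:3} where $p=2\kappa>1$, is that the bracketed quantity in \eqref{eq:gronwall} must remain positive for the bound to be meaningful; in the energy estimate this is guaranteed by the smallness hypothesis $\Vert F\Vert_{H^{k}}+\Vert\psi\Vert_{H^{k+1}}<\eps$, which prevents the nonlinear term from driving the majorant to blow-up on $[T_1,T_2]$.
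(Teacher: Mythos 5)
Your argument is correct and complete. Note, however, that the paper itself does not prove Proposition~\ref{prop:gronwall} at all: it invokes it as a known nonlinear Gr\"onwall (Bihari/Willett--Wong type) inequality, citing \cite{Willet}, and proceeds directly to apply it to $f(t)$ with $p=2\kappa$. Your Bernoulli-substitution argument is the standard derivation of exactly this estimate, so in effect you have supplied the proof that the paper outsources to the literature. Your sign bookkeeping is right: $R'\le vR+wR^p$ follows from $0\le u\le R$ and monotonicity of $x\mapsto x^p$ on $[0,\infty)$; the substitution $z=R^{1-p}$ turns this into $(1-p)^{-1}z'\le vz+w$; for $p<1$ the two sign-preserving steps give an upper bound on $z$ and hence on $R$, while for $p>1$ the inequality flips to a lower bound $z\ge A$ and flips back when taking the negative power $z^{1/(1-p)}$, again yielding an upper bound on $R$ --- provided the bracketed quantity $A$ is strictly positive, since otherwise $A^{1/(1-p)}$ is not a well-defined real number and the conclusion is vacuous. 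That positivity proviso is an implicit hypothesis of \eqref{eq:gronwall} as written (it is exactly the ``up to blow-up time'' restriction in the Willett--Wong inequality), and you correctly observe both that it is needed and that in the application \eqref{eq:3}, where $p=2\kappa>1$, it is secured by the smallness of $u_0 = \Vert F\Vert_{H^k(W)}+\Vert\psi\Vert_{H^{k+1}(\nullinf^-(T_1))}$ --- precisely how the paper uses the proposition, choosing $u_0$ so small that $u_0^{p-1}(e^{C(T_2-T_1)}-1)\le c<1$.
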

We apply Proposition~\ref{prop:gronwall} to $f(t)$, $v=w=1$ and $p={2\kappa}$, with
$$
u_0 = \Vert F \Vert_{H^{k}(W)}
+ \Vert \psi\Vert_{H^{k+1}(\nullinf^-(T_1))}.
$$
Thus
$$
f(t) \leq C\left(
u_0 + \int_{T_1}^t f(s) + f^p(s) ds
\right)
$$
implies
\begin{align*}
f(t) 
&\leq
C'e^{C(t-T_1)} u_0
\left[
1 - u_0^{p-1}(e^{C(t-T_1)}-1)
\right]^\frac{1}{1-p},
\end{align*}
for all $t\in[T_1,T_2]$, if the initial data $u_0$ is chosen so small that
$$u_0^{p-1}(e^{C(T_2-T_1)}-1) \leq c$$ for some $c<1$.
This concludes the proof of Proposition~\ref{prop:energy_inequality} for the set $W$.
The analogous result in $W_0$ is obtained by removing the boundary condition on $\nullinf^-$
and covering the whole space $W_0$ by local coordinate neighborhoods.
\end{proof}


\begin{proposition}\label{prop:energy_inequality uniqueness}
Suppose $\psi_j \in  H^{k+1}(W)$, $j=1,2$, where $k+1>2$ satisfy
\ba
& &\square_g \psi_j + B\psi_j + A\psi_j^\kappa=F,\quad\text{on } W,\\
& &\psi_j (x)=0,\quad \hbox{for }x\in U,\\
& &\psi_j (x)=h,\quad \hbox{for }x\in \nullinf^-(T_1),
\ea
where $A$ and $B$ are smooth,
 $U$ is an open neighbourhood  of $\{x\in N_1\mid{\bf t_1}(x)=T_1 \}\cup\{i_0\}$ and 
 $\supp(F) \subset I^+(W)$ is compact such that $U\cap\supp(F)=\emptyset$.
Then, $\psi_1=\psi_2$.
\end{proposition}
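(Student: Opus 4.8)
The plan is to reduce the statement to the uniqueness of the \emph{linear} Goursat problem by subtracting the two solutions. First I would set $w=\psi_1-\psi_2\in H^{k+1}(W)$ and subtract the two equations. The principal and zeroth-order terms are linear and cancel into $\square_g w+Bw$, while the two nonlinear terms combine via the elementary factorization
\[
A\psi_1^\kappa-A\psi_2^\kappa=A\,(\psi_1-\psi_2)\sum_{m=0}^{\kappa-1}\psi_1^{\,m}\psi_2^{\,\kappa-1-m}=\tilde B\, w,\qquad \tilde B:=A\sum_{m=0}^{\kappa-1}\psi_1^{\,m}\psi_2^{\,\kappa-1-m}.
\]
Hence $w$ solves the \emph{linear} homogeneous wave equation $\square_g w+(B+\tilde B)w=0$ on $W$, with a zeroth-order coefficient $B+\tilde B$ assembled out of $\psi_1,\psi_2$.

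Second, I would record the boundary and initial data for $w$. Since $\psi_1=\psi_2=h$ on $\nullinf^-(T_1)$ we get $w|_{\nullinf^-(T_1)}=0$, and since $\psi_1=\psi_2=0$ on $U\supset\{x\in N_1\mid{\bf t}_1(x)=T_1\}\cup\{i_0\}$ we get $w\equiv0$ on $U$; the source for the $w$-equation is identically $0$, so the support hypothesis $U\cap\supp(F)=\emptyset$ is vacuous. Because $k+1>2$ and $\dim W=4$, the Sobolev embedding $H^{k+1}(W)\hookrightarrow C(W)$ shows that $\psi_1,\psi_2$ are continuous, so the coefficient $B+\tilde B$ is at least continuous on $\overline W$ (in fact $B+\tilde B\in C^{k-2}(W)$, using that $A,B$ are smooth).

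Finally, I would apply the energy inequality of Proposition~\ref{prop:energy_inequality} to $w$ in the \emph{linear} case $\kappa=1$, for which no smallness of the data is required, taking $B+\tilde B$ in place of $B$ and source $F=0$. Since $\|0\|_{H^{k}(W)}=0$ and $\|w\|_{H^{k+1}(\nullinf^-(T_1))}=0$, the inequality forces $\|\partial_t^\ell w\|_{H^{k-\ell+1}(\Sigma_t)}=0$ for every $t\in[T_1,T_2]$, whence $w\equiv0$ and $\psi_1=\psi_2$; equivalently, one may invoke the linear Goursat uniqueness of Nicolas~\cite{Nicolas} directly for the equation satisfied by $w$. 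The one point that needs care is that $\tilde B$ inherits only the regularity $C^{k-2}$ from the Sobolev embedding rather than $C^k$; I expect this to be the main (mild) obstacle, and I would circumvent it either by running the energy argument at the reduced regularity level (for $w\in H^{k-1}(W)$ with the $C^{k-2}$ coefficient) or, more simply, by using only the lowest-order, $L^2$-based energy identity, which is all that uniqueness requires and which needs merely bounded continuous coefficients.
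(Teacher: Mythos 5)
Your proposal is correct and takes essentially the same route as the paper: form $w=\psi_1-\psi_2$, factor $A\psi_1^\kappa-A\psi_2^\kappa$ into a zeroth-order term times $w$, observe that $w$ then solves a linear wave equation with continuous coefficient (by Sobolev embedding) and vanishing data on $U$ and $\nullinf^-(T_1)$, and conclude $w=0$ via the linear Goursat uniqueness of Nicolas~\cite{Nicolas}, which is exactly the paper's argument. Your extra remark that the combined coefficient is only $C^{k-2}$ rather than $C^k$ — so that one should either run the energy argument at the $L^2$ level or invoke Nicolas directly for merely continuous coefficients — correctly identifies and repairs the one point the paper passes over lightly.
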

\begin{proof}
The claim follows readily by applying Proposition~\ref{prop:energy_inequality}
 to the difference $w=\psi_1-\psi_2$ and observing that by 
the Sobolev embedding theorem, $w\in C(\overline W)$.
Since 
\[
s_1^\kappa - s_2^\kappa = (s_1-s_2)\sum_{p=1}^{\kappa-1} s_1^p s_2^{\kappa-1-p} =\vcentcolon p_\kappa(s_1,s_2)\cdot (s_1-s_2),
\]
we see that $w$ satisfies 
\ba
& &\square_g w + Bw + Ap_\kappa(\psi_1,\psi_2)\cdot w =0,\quad\text{on } W,\\
& &w=0,\quad \hbox{for }x\in U,\\
& &w=0,\quad \hbox{for }x\in \nullinf^-(T_1).
\ea
   As $Ap_\kappa(\psi_1,\psi_2)$ is a continuous function, we see using \cite{Nicolas}
   that $w=0$. Hence, $\psi_1=\psi_2$.
\end{proof}

\bibliography{references} 
\bibliographystyle{abbrv}

\end{document}